\documentclass[11pt]{article}

\usepackage[T1]{fontenc}
\usepackage[utf8]{inputenc}
\usepackage[english]{babel}

\usepackage[dvipsnames,svgnames]{xcolor}

\usepackage{amsmath,amssymb,amsfonts,amsthm,upgreek,verbatim}
\usepackage{stmaryrd}
\usepackage{mathtools}
\let\epsilon\varepsilon
\let\phi\varphi
\let\emptyset\varnothing
\usepackage{scalerel}
\usepackage{mleftright}
\mleftright
\usepackage{seqsplit}

\usepackage{bbm}
\usepackage{bm}

\DeclareMathAlphabet\mbc{OMS}{cmsy}{b}{n}
\DeclareMathAlphabet\mathbfcal{OMS}{cmsy}{b}{n}

\usepackage[overload]{empheq}

\usepackage{booktabs}

\usepackage{enumerate,enumitem}

\usepackage{subcaption}
\usepackage[font={small}]{caption}

\usepackage{algorithm,algpseudocode}
\newcommand{\algcomment}[1]{\hfill\textnormal{\textcolor{gray}{(#1)}}}

\usepackage{fancyhdr}
\usepackage[a4paper]{geometry}
\usepackage{url}
\usepackage{hyperref}
\hypersetup{
    colorlinks = true,
    citecolor = BrickRed,
    urlcolor = blue
}
\usepackage[capitalise,noabbrev,nameinlink]{cleveref}
\usepackage{thmtools}
\newcommand{\crefpart}[2]{%
  \hyperref[#2]{\namecref{#1}~\labelcref*{#1}~\ref*{#2}}%
}
\crefname{equation}{}{}
\crefname{enumi}{}{}
\let\eqref\cref

\theoremstyle{definition}
\newtheorem{theorem}{Theorem}[section]
\newtheorem{definition}[theorem]{Definition}
\newtheorem{proposition}[theorem]{Proposition}
\newtheorem{corollary}[theorem]{Corollary}

\newtheorem{lemma}[theorem]{Lemma}
\newtheorem{remark}[theorem]{Remark}
\newtheorem{example}[theorem]{Example}
\newtheorem{assumption}[theorem]{Assumption}
\newtheorem{fact}[theorem]{Fact}

\crefname{theorem}{Theorem}{Theorems}
\crefname{definition}{Definition}{Definitions}
\crefname{proposition}{Proposition}{Propositions}
\crefname{corollary}{Corollary}{Corollaries}
\crefname{lemma}{Lemma}{Lemmas}
\crefname{remark}{Remark}{Remarks}
\crefname{example}{Example}{Examples}
\crefname{assumption}{Assumption}{Assumptions}
\crefname{fact}{Fact}{Facts}
\crefname{conjecture}{Conjecture}{Conjectures}
\crefname{algorithm}{Algorithm}{Algorithms}

\usepackage{tcolorbox}
\tcbuselibrary{breakable}

\usepackage{tikz}
\usepackage{tikz-cd}
\usetikzlibrary{decorations.markings,arrows}
\usetikzlibrary{shapes}
\usepackage{adjustbox}
\usepackage{pgf}
\usepackage{pgfplots}
\pgfplotsset{compat=1.18}

\DeclareMathOperator{\ri}{ri}
\DeclareMathOperator{\sri}{sri}
\DeclareMathOperator{\gra}{gra}

\DeclareMathOperator{\ran}{ran}

\DeclareMathOperator{\rk}{rank}
\DeclareMathOperator{\dom}{dom}
\DeclareMathOperator{\Id}{Id}

\DeclareMathOperator{\Diag}{diag}
\DeclareMathOperator{\Lap}{Lap}

\DeclareMathOperator{\Inc}{Inc}
\DeclareMathOperator*{\argmin}{argmin}

\newcommand{\wto}{\rightharpoonup}
\newcommand{\Hi}{\mathcal{H}}

\newcommand{\Gi}{\mathcal{G}}
\newcommand{\bHi}{\mbc{H}}
\newcommand{\R}{\mathbb{R}}

\newcommand{\tto}{\rightrightarrows}

\usepackage[doi=true,isbn=false,url=false,eprint=false,sortcites=true,maxnames=5,giveninits=true,sorting=nyt,backend=biber]{biblatex}
\usepackage{csquotes}
\usepackage{xurl}
\DeclareNameAlias{author}{given-family}
\newtoggle{isjournalshort}
\settoggle{isjournalshort}{true}

\DeclareFieldFormat{doi}{%
  \urlstyle{tt}%
  \href{https://doi.org/#1}{\nolinkurl{DOI:#1}}%
}

\DeclareFieldFormat[inbook,incollection,inproceedings]{title}{#1}

\DeclareBibliographyDriver{article}{%
\printnames{author}\addperiod\space
\thefield{title}\addperiod\space
\textit{\iftoggle{isjournalshort}{\thefield{shortjournal}}{\thefield{journaltitle}}}\addperiod\addspace%
\iffieldundef{volume}{}{%
\textbf{\thefield{volume}}(\thefield{number})\addcomma\space}%
\thefield{pages}\addspace
(\thefield{year})\addperiod\space
\printfield{doi}
\finentry
}

\DeclareBibliographyDriver{misc}{%
\printnames{author}\addperiod\space
\thefield{title}\addperiod\space
(\printfield{year})\addperiod\space
\iffieldundef{eprint}{}{%
  \texttt{\href{https://arxiv.org/abs/\thefield{eprint}}{arxiv:\thefield{eprint}}}
  \addspace}%
\finentry
}

\AtEveryBibitem{%
  \ifentrytype{book}{%
    \clearfield{doi}%
  }{}%
}

\makeatletter

\newcommand{\funding}[1]{\def\@funding{#1}}
\newcommand{\interests}[1]{\def\@interests{#1}}
\newcommand{\data}[1]{\def\@data{#1}}
\newcommand{\referees}[1]{\def\@referees{#1}}

\def\@funding{}
\def\@interests{}
\def\@data{}
\def\@referees{}

\newcommand{\makeacknowledgements}{
    \ifx\@funding\@empty\else\paragraph{Acknowledgements}\@funding\fi
    \ifx\@interests\@empty\else\paragraph{Conflict of interest}\@interests\fi
    \ifx\@data\@empty\else\paragraph{Data Availability}\@data\fi
    \ifx\@referees\@empty\else\paragraph{Referees}\@referees\fi
}

\newcommand{\keywords}[1]{\def\@keywords{#1}}
\newcommand{\msc}[1]{\def\@msc{#1}}

\def\@keywords{}
\def\@msc{}

\usepackage{listofitems}
\newcommand{\dotlist}[1]{%
  \readlist*\myitems{#1}%
  \foreachitem\x\in\myitems{%
    \ifnum\xcnt=1\else~·~\fi\x%
  }%
}

\newcommand{\makepaperdata}{
  \ifx\@keywords\@empty\else\paragraph{Keywords}\dotlist{\@keywords}\fi
  \ifx\@msc\@empty\else\paragraph{MSC}\dotlist{\@msc}\fi
}

\makeatother

\newcommand*\samethanks[1][\value{footnote}]{\footnotemark[#1]}

\hypersetup{linkcolor=DarkOrchid}
\title{A unified family of multiblock ADMM schemes\\
 by means of graph splitting}
\author{
	Rub\'en Campoy\thanks{Department of Mathematics, University of Alicante, \textsc{Spain}, \href{mailto:ruben.campoy@ua.es}{ruben.campoy@ua.es}, \href{mailto:cesar.lopez@ua.es}{cesar.lopez@ua.es}.}
	\and Scott B. Lindstrom\thanks{Centre for Optimisation and Decision Science, Curtin University, \textsc{Australia}, \href{mailto:scott.lindstrom@curtin.edu.au}{scott.lindstrom@curtin.edu.au}.}
	\and C\'esar L\'opez-Pastor\samethanks[1]
}
\date{}

\keywords{Alternating direction method of multipliers, multiblock optimization, Douglas--Rachford, graph splitting methods, duality, monotone inclusions}

\msc{47H05 $\cdot$ 47N10 $\cdot$ 47J25 $\cdot$ 49M29 $\cdot$ 90C25}

\funding{R.C. and C.L.P. were partially supported by grant PID2025-168246NB-I00 funded by  MICIU/AEI/10.13039/501100011033 and by ERDF/EU, as well as by grant CIGE/2026/80 funded by Conselleria d'Educaci\'o, Cultura i Universitats, Generalitat Valenciana. C.L.P. was additionally supported by grant PREP2022-000118 funded by \seqsplit{MICIU/AEI/10.13039/501100011033} and by ``ESF Investing in your future''. S.B.L. acknowledges the Banff International Research Station Workshop \textit{Strategies for Handling Applications with Nonconvexity (25w5372)}, which seeded his collaboration in this research.}
\interests{The authors declare that they have no conflict of interest.}
\data{}
\referees{}

\begin{document}

\maketitle

\begin{abstract}
This paper introduces a novel unified family of multiblock ADMM schemes by combining Gabay duality with a graph-based splitting methodology. This framework allows us to recover existing ADMM algorithms as particular instances, as well as to generate new ones by varying the underlying network topology. We focus specifically on the case of the complete graph, a structure of significant practical interest due to its dense connectivity, for which we provide a fully explicit and directly implementable scheme. We further derive an equivalent reformulation of the proposed family in which the algorithm is expressed in terms of a reduced set of dual variables, which clarifies its connection to the classical ADMM framework. Numerical experiments are conducted to compare the performance of different graph choices, showing particularly favorable results for the complete graph.
\end{abstract}

\makepaperdata

\section{Introduction}

Given real Hilbert spaces $\Hi_1, \dots, \Hi_n$ and $\Gi$, and given $b \in \Gi$, we are interested in solving systems of coupled inclusions of the form 

\begin{equation}\tag{P}\label{primal}
\text{find } (x_1,\ldots,x_n, y) \in \Hi_1 \times \dots \times \Hi_n \times \mathcal{G} \;\text{ such that}\;
\begin{cases}
0 \in F_1(x_1) + A_1^* y, \\
\quad \vdots \\
0 \in F_n(x_n) + A_n^* y, \\
A_1 x_1 + \dots + A_n x_n = b;
\end{cases}
\end{equation}
where $F_i: \Hi_i \rightrightarrows \Hi_i$ is a maximally monotone operator and $A_i \in \mathcal{L}(\Hi_i, \mathcal{G})$ is a bounded linear operator with adjoint $A_i^*$, for each $i=1,\ldots,n$. The study of coupled systems of this form is of significant interest as it encompasses a wide range of problems in applied mathematics and engineering. In particular, when each operator $F_i$ corresponds to the subdifferential of a proper, convex, and lower semicontinuous function $f_i:\Hi_i\to{]-\infty,+\infty]}$, the inclusions in \cref{primal} become the KKT system associated with the optimization problem
\[ 	\min\; \sum_{i=1}^nf_i(x_i)\quad\text{s.t. }\sum_{i=1}^nA_ix_i=b.\]

For the case $n=2$, a well-known approach to solve the problem above is the \emph{Alternating Direction Method of Multipliers (ADMM)}. Originally introduced in the mid-1970s \cite{gabay1976dual,glowinski1975approximation}, this algorithm is designed to exploit the separable structure of the problem by updating the variables sequentially. Given a penalty parameter $\rho>0$, the iteration of ADMM is given by

\begin{equation}\label{eq:clasADMM}
    \left\{
    \begin{aligned}
    x_1^{k+1} &\in \argmin_{x_1 \in \Hi_1} \left\{ f_1(x_1) + \langle y^k, A_1 x_1 \rangle + \frac{\rho}{2} \| A_1 x_1 + A_2 x_2^k - b \|^2 \right\}, \\
    x_2^{k+1} &\in \argmin_{x_2 \in \Hi_2} \left\{ f_2(x_2) + \langle y^k, A_2 x_2 \rangle + \frac{\rho}  {2} \| A_1 x_1^{k+1} + A_2 x_2 - b \|^2 \right\}, \\
    y^{k+1} &= y^k + \rho (A_1 x_1^{k+1} + A_2 x_2^{k+1} - b).
    \end{aligned}
    \right.
\end{equation}
This scheme can be understood as a splitting strategy that performs a separate minimization of the \emph{augmented Lagrangian} associated with the problem (see~\cref{def:lagrangian}) for each variable block. 

By now, ADMM has become a fundamental tool in several fields, including large-scale optimization, signal processing, and statistical learning \cite{Boyd2011}. However, while the algorithm is well understood for two blocks, many modern applications naturally lead to the multiblock case where $n \geq 3$.  The most direct extension of the classical ADMM to this setting is a Gauss--Seidel-type implementation, where the $n$ primal block components $x_i^{k+1}$ are updated one by one before the dual update $y^{k+1}$:

\begin{equation}
    \left\{
    \begin{aligned}
        x_1^{k+1} &\in \argmin_{x_1 \in \Hi_1} \left\{ f_1(x_1) + \langle y^k, A_1 x_1 \rangle + \frac{\rho}{2} \left\| A_1 x_1 +\textstyle \sum_{j=2}^n A_j x_j^k - b \right\|^2 \right\}, \\
        & \vdots\\
        x_i^{k+1} &\in \argmin_{x_i \in \Hi_i} \left\{ f_i(x_i) + \langle y^k, A_i x_i \rangle + \frac{\rho}{2} \left\| \textstyle\sum_{j=1}^{i-1} A_j x_j^{k+1} + A_i x_i + \sum_{j=i+1}^n A_j x_j^k - b \right\|^2 \right\}, \\
        & \vdots\\
        x_n^{k+1} &\in \argmin_{x_n \in \Hi_n} \left\{ f_n(x_n) + \langle y^k, A_n x_n \rangle + \frac{\rho}{2} \left\|\textstyle \sum_{j=1}^{n-1} A_j x_j^{k+1} + A_n x_n - b \right\|^2 \right\}, \\
        y^{k+1} &= y^k + \rho \left( \textstyle\sum_{j=1}^n A_j x_j^{k+1} - b \right).
    \end{aligned}
    \right.
\end{equation}

Although this multiblock scheme may perform well in practice, it is well-known that it does not necessarily converge, even for simple quadratic problems \cite{chen2016direct}. To ensure its convergence, several strategies have been proposed in the literature. Initial efforts focused on imposing structural assumptions such as orthogonality conditions or full-rank on the matrices, or strong convexity of some terms in the objective function \cite{chen2016direct, han2012note, li2015convergent, lin2018global,hong2017linear}. Other approaches introduced algorithmic modifications such as Gaussian back-substitution \cite{he2012alternating}. Parallel strategies have also been explored, combining the iterative process with Jacobi-type updates and proximal penalization terms~\cite{deng2017parallel, he2015splitting}. For a comprehensive and recent review of these developments, the reader is referred to the survey \cite{han2022survey}.

A fundamental approach to understanding ADMM is through its duality-like relationship with the \emph{Douglas--Rachford (DR)} splitting algorithm~\cite{DR56, LM79}. This connection was first described by Gabay~\cite{gabay1983chapter}, has been further elucidated in popular accessible works by Eckstein, Yao, and others \cite{EcksteinYao2012,eckstein2015understanding,lindstrom2021survey}, and has been called \textit{Gabay's duality}~\cite{gabayduality}. This relationship permits the derivation of structural properties and convergence results for ADMM through the analysis of DR applied to the dual problem~\cite{HeYuan2012,HeYuan2015}. It also allows for principled approaches to algorithmic extrapolation based on the DR algorithm's governing sequence, whose dynamics have been frequently observed to be better suited to such schemes than the sequences by which ADMM is conventionally expressed~\cite{calcan2024admm,lindstrom2022computable,poon2019trajectory}. A key tool in this type of duality theory is Moreau's identity (see~\cref{rem:Moreau}). Recently, a \emph{complete Moreau's identity} has been introduced that extends this mechanism to fully general resolvent mappings and enables Gabay-like duality relationships to be derived without convexity or monotonicity assumptions~\cite{gabayduality}.

Analogously to the classical Douglas--Rachford setting, some convergent multiblock ADMM schemes have been derived as dual counterparts of different multi-operator extensions of the DR algorithm. For instance, alongside the introduction of the \emph{Davis--Yin}~\cite{davisyin} and \emph{Malitsky--Tam}~\cite{malitsky2023resolvent} splitting algorithms, their authors explicitly derived the resulting ADMM\footnote{Here, by `ADMM', we mean a broader class of alternating direction methods that subsumes the classical (2-block) Alternating Direction Method of Multipliers. To specify the 2-block case, we refer to `classical ADMM'.} iterations obtained via duality. More recently, Alcantara, Dao, and Takeda~\cite{alcantaraADMM} have developed a new multiblock ADMM within a strongly convex-weakly convex setting which, in the standard convex case, reduces to applying duality to the splitting framework of~\cite{campoy}.

In recent years, different extensions of DR to deal with more than two operators have been proposed (see, e.g.,~\cite{campoy, condat2023proximal, malitsky2023resolvent, MBG24, ryu20, tam2023frugal}). To provide a unified perspective, many of them have recently been framed under the graph-based methodology introduced by Bredies, Chenchene, and Naldi~\cite{graph-drs}. This framework provides a systematic way to devise splitting algorithms by mapping the interactions between the operators onto the nodes and edges of a graph and a subgraph, where the underlying network topology determines how the variables are updated within the iteration. As is common in splitting methods, we distinguish between the governing sequence, which drives the iteration, and the associated shadow sequences obtained through the individual resolvent evaluations. When graph-based splitting is applied to a dual problem, we will refer to the latter as the dual shadow sequences.

Very recently, Alcantara, Dao and Takeda~\cite{ADT26composite} have introduced a general splitting framework for composite semimonotone inclusions with affine constraints, encompassing both multioperator and linearly constrained multiblock problems. Their approach handles the affine constraint directly within the splitting strategy, without employing duality. Therefore, the algorithms they derive for affine-constrained problems have a structure that differs from standard ADMM schemes. In particular, their two-block realization is explicitly shown to differ from classical ADMM, while their multiblock scheme is described as being reminiscent of a Jacobi-type ADMM method.

In this paper, we employ the complete Moreau's identity from \cite{gabayduality} along with the graph-based splitting framework of \cite{graph-drs} to construct a unified family of multiblock ADMM schemes. Our construction proceeds by applying graph-based splitting algorithms to a dual problem and transporting the resulting iterations back to the primal variables through Gabay duality. The resulting algorithms are parametrized by an algorithmic graph $G$, a connected spanning subgraph $G'$, and an onto decomposition of the Laplacian of the latter (see \cref{def:graph-setting}). Different choices of these graph structures lead to different coordination patterns among the variables, providing a common framework that encompasses several existing ADMM-type methods while also giving rise to new instances.

Our main contributions can be summarized as follows:
\begin{enumerate}[label={\rm (\roman*)}, leftmargin=*]
\item We systematically construct a novel, unified family of multiblock ADMM schemes from graph-based splitting methods applied to the dual problem. The roles of the graph $G$ and the onto decomposition of $G'$ are made explicit in the resulting iteration, showing how different graph topologies induce different coordination patterns among the variables. Several existing methods, including classical ADMM, the method of Malitsky--Tam, and that of Alcantara, Dao, and Takeda, are recovered as particular instances.
    
\item We study in detail the complete-graph instance of the proposed family. Complete connectivity is of particular practical interest, as densely connected graph structures have been observed to perform very well in numerical experiments (see, e.g.,~\cite{graph-fb}). We exploit this topology and obtain a compact explicit form of the resulting multiblock ADMM scheme which is ready for implementation.
    
\item We derive an equivalent reduced primal-dual formulation of the proposed family in which the governing sequence is eliminated, and the resulting iteration can be expressed in terms of the $n$ primal blocks and only $n-1$ independent dual variables. Remarkably, this reformulation no longer depends on the particular onto decomposition chosen. Besides providing an alternative representation of the resulting methods, this formulation further clarifies their connection with classical ADMM, where the retained dual shadow variable is identified with the usual ADMM Lagrange multiplier.
\end{enumerate}

The structure of the paper is as follows. \Cref{sec:preliminaries} introduces the notation and background used throughout the paper. In \cref{sec:graph-admm}, we construct the graph-based family of multiblock ADMM schemes, establish its convergence properties, and specialize the framework to convex minimization. \Cref{sec:instances} illustrates the framework on particular instances, first recovering several existing ADMM schemes, and then deriving a fully explicit method associated with the complete graph. In \cref{sec:multiplier-only}, we develop an equivalent primal--dual reformulation that clarifies the relationship with the classical setting of the ADMM. \cref{sec:numerics} reports numerical experiments comparing the practical performance of several instances of the family. Finally, \cref{sec:conclu} collects our concluding remarks.

\section{Preliminaries and notation}
\label{sec:preliminaries}

Throughout this paper, $\Gi$ and $\Hi_i$ (for each $i=1,\dots,n$) denote real Hilbert spaces, each endowed with its corresponding inner product $\langle \cdot, \cdot \rangle$ and induced norm $\|\cdot\|$. We write $x^k\to x$ and $x^k\rightharpoonup x$ to denote strong and weak convergence, respectively. We consider the product space $\bHi = \Hi_1 \times \dots \times \Hi_n$ equipped with the standard product metric. As a general notation rule, elements in product spaces will be marked in bold, e.g., $\bm{x}=(x_1,x_2,\ldots,x_n)\in\bHi$ where $x_i\in\Hi_i$ for each $i=1,\ldots,n$. We denote by $\bm{1}_n$ the all-ones vector in $\mathbb{R}^n$. With a slight abuse of notation, given a matrix $M\in\mathbb{R}^{p\times q}$, we use the same symbol $M$ for its canonical lifting \(M\otimes\Id_{\Gi}:\Gi^q\to\Gi^p\), where $\Id_{\Gi}$ denotes the identity mapping in $\Gi$. Thus, for every
$\bm{y}=(y_1,\ldots,y_q)\in\Gi^q$,
\[
    M\bm{y}
    =
    \left(\textstyle\sum_{j=1}^q M_{1,j}y_j,\sum_{j=1}^q M_{2,j}y_j,\ldots,\sum_{j=1}^q M_{p,j}y_j\right)\in\Gi^p.
\]
Similarly, vectors in $\R^n$ can be viewed as $n\times 1$ matrices, so they can be identified with their canonical lifting from $\Gi$ to $\Gi^n$. In particular, we regard $\bm 1_n$ also as the bounded linear embedding $\bm 1_n:\Gi\to\Gi^n$, i.e., \(\bm{1}_n y:=(y,\ldots,y)\in\Gi^n\) for any $y\in\Gi$, whose adjoint is precisely $\bm 1_n^*:\Gi^n\to\Gi$, which acts as $\bm 1_n^*\bm y=\sum_{i=1}^ny_i$, for $\bm y=(y_1,\ldots,y_n)\in\Gi^n$.

\subsection{Convex analysis and monotone operators}
In this section, generic Hilbert spaces $\Hi$ and $\Gi$ are used to state standard definitions and properties that naturally apply to any specific space in our framework. For a set $C$ in a Hilbert space, $\sri C$, $\ri C$ and $\operatorname{span}C$ stand for its \emph{strong relative interior}, its \emph{relative interior}, and the \emph{linear subspace generated} by $C$, respectively.

Let $F: \Hi \tto \Hi$ be a set-valued operator. The \emph{domain}, \emph{range}, and \emph{graph} of $F$ are defined respectively as
\begin{align*}
    \dom F &:= \{x \in \Hi \mid F(x) \neq \emptyset\}, \\
    \ran F &:= \{y \in \Hi \mid \exists x \in \Hi \text{ such that } y \in F(x)\}, \\
    \gra F &:= \{(x, y) \in \Hi \times \Hi \mid y \in F(x)\}.
\end{align*}
The \emph{inverse operator} $F^{-1}: \Hi \rightrightarrows \Hi$ is characterized by the equivalence $x \in F^{-1}(y) \Leftrightarrow y \in F(x)$.

An operator $F: \Hi \tto \Hi$ is said to be \emph{monotone} if
    \begin{equation*}
        \langle x - \tilde{x}, y - \tilde{y} \rangle \ge 0,\quad \forall (x, y), (\tilde{x}, \tilde{y}) \in \operatorname{gra} F.
    \end{equation*}
It is \emph{maximally monotone} if it is monotone and its graph cannot be strictly contained in the graph of any other monotone operator on $\Hi$. The \emph{resolvent} of $F$ is the operator $J_F: \Hi \tto \Hi$ defined as
\begin{equation*}
    J_F := (\Id_{\Hi} + F)^{-1}.
\end{equation*}
By Minty's Theorem, if $F$ is maximally monotone, then its resolvent $J_F$ is single-valued with full domain (see, e.g., \cite[Theorem 21.1]{bauschke}). Given $(\overline x,\overline y)\in\operatorname{gra}F$ and an increasing function $\phi:[0,+\infty[\,\to[0,+\infty]$, vanishing only at $0$, $F$ is \emph{uniformly monotone at $\overline x$} (relative to $\overline y$) \emph{with modulus} $\phi$ if
\begin{equation*}
    \langle x-\overline x,y-\overline y\rangle\geq\phi(\|x-\overline x\|),
    \quad\forall(x,y)\in\operatorname{gra}F.
\end{equation*}
When this holds with the same modulus $\phi$ at every $(\overline x,\overline y)\in\operatorname{gra}F$, then $F$ is said to be \emph{uniformly monotone with modulus} $\phi$. Given $\beta>0$, $F$ is \emph{$\beta$-strongly monotone} if $F-\beta\Id_\Hi$ is monotone, equivalently, if it is uniformly monotone with modulus $\phi(t)=\beta t^2$. Finally, given a nonempty subset $D$ of $\Hi$, a single-valued operator $T:D\to\Hi$ is \emph{$\beta$-cocoercive}, for $\beta>0$, if
\begin{equation*}
    \langle x-\tilde x,Tx-T\tilde x\rangle\geq\beta\|Tx-T\tilde x\|^2,
    \quad\forall x,\tilde x\in D.
\end{equation*}

We denote by $\mathcal{L}(\Hi, \Gi)$ the space of bounded linear operators from $\Hi$ into $\Gi$. For any operator $A \in \mathcal{L}(\Hi, \Gi)$, its \emph{adjoint operator} $A^* \in \mathcal{L}(\Gi, \Hi)$ is the unique operator satisfying
\begin{equation*}
    \langle Ax, y \rangle = \langle x, A^*y \rangle, \quad \forall x \in \Hi, \; \forall y \in \Gi.
\end{equation*}
The \emph{kernel} of $A$ is the closed linear subspace of $\Hi$ defined by $\ker A := \{x \in \Hi \mid Ax = 0\}$.  

Let $\Gamma_0(\Hi)$ denote the class of proper, convex, and lower semicontinuous functions from $\Hi$ to $]-\infty, +\infty]$. For any $f \in \Gamma_0(\Hi)$, its \emph{convex conjugate} $f^* \in \Gamma_0(\Hi)$ is defined as 
\[f^*(u) := \sup_{x \in \Hi} \{ \langle u, x \rangle - f(x) \},\quad \forall u \in \Hi.\] 
The \emph{subdifferential} of $f \in \Gamma_0(\Hi)$ is the set-valued mapping $\partial f: \Hi \rightrightarrows \Hi$ defined as
\[\partial f(x) := \{ y \in \Hi \mid f(\tilde{x}) \ge f(x) + \langle y, \tilde{x} - x \rangle, \; \forall \tilde{x} \in \Hi \},\quad \forall x\in\Hi.\]
A fundamental theorem due to Rockafellar states that if $f \in \Gamma_0(\Hi)$, then $\partial f$ is a maximally monotone operator (\cite[Theorem 21.2]{bauschke}), and its inverse mapping satisfies $(\partial f)^{-1} = \partial f^*$ (\cite[Corollary 16.30]{bauschke}). In addition, the resolvent of the subdifferential operator coincides with the classical \emph{proximal operator} $\operatorname{prox}_f: \Hi \to \Hi$, which can be characterized by
\[\operatorname{prox}_f(x) := J_{\partial f}(x) = \argmin_{y \in \Hi} \left\{ f(y) + \frac{1}{2}\|y - x\|^2 \right\},\quad \forall x\in\Hi;\]
see, e.g.,~\cite[Example~23.3]{bauschke}.

Given a nonempty subset $C$ of $\Hi$, the \emph{indicator function} of $C$ is the extended-real function $\iota_C:\Hi\to\,]-\infty,+\infty]$ defined as
\[ \iota_C(x)=
    \begin{cases}
        0, & \text{if } x\in C;\\
        +\infty, & \text{otherwise}.
    \end{cases}
\]
The \emph{normal cone} to $C$ at $x\in C$ is
\[N_C(x):=\{u\in\Hi\mid\langle z-x,u\rangle\leq0,\ \forall z\in C\},\]
and $N_C(x):=\emptyset$ when $x\not\in C$. If $C$ is nonempty, closed, and convex, $\iota_C\in\Gamma_0(\Hi)$, and $\partial\iota_C=N_C$ (see, e.g.,~\cite[Example~16.13]{bauschke}).

\subsection{Gabay duality and complete Moreau identities}
\label{sec:gabay-duality}

Gabay duality associates with a set-valued operator a dual operator whose resolvent can be expressed in terms of the original operator. We recall below the construction of the Gabay dual operator introduced in \cite[Definition~2.2]{gabayduality}, adapted to our setting.

\begin{definition}\label{def_D}
Let $F:\Hi\rightrightarrows\Hi$, $A\in\mathcal{L}(\Hi,\Gi)$ and $b\in\Gi$. The \emph{dual operator} associated with $(F,A,b)$ is the set-valued operator $D:\Gi\rightrightarrows\Gi$ defined by
\begin{equation}
\label{eq:dual-operator-general}
    D
    :=
    -A\circ F^{-1}\circ(-A^*)+b.
\end{equation}
\end{definition}

The following complete Moreau identity provides an expression
for the resolvent of $D$ in terms~of~$F$.

\begin{fact}[Complete Moreau's Identity {\cite[Theorem~3.1]{gabayduality}}]\label{thm:GMI}
Let $F:\Hi\rightrightarrows\Hi$, $A\in\mathcal{L}(\Hi,\Gi)$, $b\in\Gi$, and let $D$ be their associated dual operator as defined in \cref{eq:dual-operator-general}. Given $\gamma>0$,
\begin{equation}
\label{GMI_resolvent}
    J_{\gamma D}(y)
    =
    y-\gamma b
    +
    \gamma A
    \left(F+\gamma A^*A\right)^{-1}
    \left(-A^*\left(y- \gamma b\right)\right),\quad \forall y\in\Gi.
\end{equation}
Moreover, if $\lambda\in J_{\gamma D}(y)$,  then there exist
$v\in\Gi$ and $z\in\Hi$ such that
\begin{enumerate}[label={\rm(\roman*)},leftmargin=*]
    \item 
    $v\in D(\lambda)$ and $y=\lambda+\gamma v$; \label{item:GMI_1}

    \item
    $z\in F^{-1}(-A^*\lambda)$ and $v=-Az+b$; \label{item:GMI_2}

    \item
    $z \in (F+\gamma A^*A)^{-1} \bigl(-A^*(y-\gamma b)\bigr).$\label{item:GMI_3}
\end{enumerate}
\end{fact}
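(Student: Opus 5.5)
We prove the statement by unwinding the definitions and working only at the level of graphs of set-valued operators. No monotonicity is needed for this, so the identity holds for arbitrary $F$ and is read as an equality of sets. Fix $y\in\Gi$. By definition of the resolvent, $\lambda\in J_{\gamma D}(y)$ if and only if $y\in\lambda+\gamma D(\lambda)$. This happens if and only if there is $v\in D(\lambda)$ with $y=\lambda+\gamma v$, which is item~\ref{item:GMI_1}.

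Next we unpack $D(\lambda)=-A F^{-1}(-A^*\lambda)+b$ from \cref{eq:dual-operator-general}. The condition $v\in D(\lambda)$ holds if and only if there is $z\in F^{-1}(-A^*\lambda)$ with $v=-Az+b$, which is item~\ref{item:GMI_2}. Combining with the first step gives $\lambda=y-\gamma v=y-\gamma b+\gamma Az$.

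We now eliminate $\lambda$. The condition $z\in F^{-1}(-A^*\lambda)$ means $-A^*\lambda\in F(z)$. Substituting the expression for $\lambda$ gives
\[
-A^*(y-\gamma b)-\gamma A^*Az\in F(z),
\qquad\text{that is,}\qquad
-A^*(y-\gamma b)\in (F+\gamma A^*A)(z).
\]
This says $z\in (F+\gamma A^*A)^{-1}\bigl(-A^*(y-\gamma b)\bigr)$, which is item~\ref{item:GMI_3}.

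Every step above is an equivalence, so we can also run the argument backwards. Take any $z$ in $(F+\gamma A^*A)^{-1}(-A^*(y-\gamma b))$. Set $\lambda:=y-\gamma b+\gamma Az$ and $v:=-Az+b$. Reversing the computation shows $-A^*\lambda\in F(z)$, hence $v\in D(\lambda)$. Since also $y=\lambda+\gamma v$, we get $\lambda\in J_{\gamma D}(y)$.

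The two directions together give the set identity \cref{GMI_resolvent}:
\[
J_{\gamma D}(y)=y-\gamma b+\gamma A\,(F+\gamma A^*A)^{-1}\bigl(-A^*(y-\gamma b)\bigr).
\]
The "moreover" part is exactly the forward chain, with $v$ and $z$ as constructed there.

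The only delicate point is the set-valued bookkeeping. We must make sure that the same witness $z$ serves in items~\ref{item:GMI_2} and~\ref{item:GMI_3}, and that the inclusion $\supseteq$ in \cref{GMI_resolvent} really holds. Both are guaranteed because each step is an equivalence at the level of individual pairs $(\lambda,z)$, not merely of sets. This is the step to write out carefully.
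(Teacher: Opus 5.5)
Your proof is correct: each step is an equivalence between individual pairs $(\lambda,z)$, so the chain yields both inclusions of the set identity and the same witness $z$ for items (ii) and (iii), and no monotonicity of $F$ is needed. The paper does not prove this fact itself but imports it from \cite{gabayduality}, and your unwinding of the definitions of $D$ and the resolvent is a complete, self-contained verification of that cited result.
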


\begin{remark}[Classical Moreau identity]
\label{rem:Moreau}
If one specifies $F= \partial f$ for $f \in \Gamma_0(\Hi)$, $A=-\Id_\Hi$, and $b= 0$  in~\cref{thm:GMI}, then \cref{GMI_resolvent} becomes
\[\operatorname{prox}_{\gamma f^*}(y)
    =
    J_{\gamma\partial f^*}(y)
    =
    J_{\gamma(\partial f)^{-1}}(y)
    =
    y-\gamma
    (\partial f+\gamma\Id_{\Hi})^{-1}(y)
    =
    y-\gamma
    \operatorname{prox}_{f/\gamma}(y/\gamma),\]
which is the classical Moreau identity (see, e.g., \cite[Theorem 14.3]{bauschke}).
\end{remark}

We next give a sufficient condition for the maximal monotonicity of the dual operator.

\begin{fact}[{\cite[Proposition~20.22 and Theorem~25.3]{bauschke}}]
\label{lem:maximality-dual-operator}
Let $F:\Hi\rightrightarrows\Hi$ be maximally monotone, $A\in\mathcal{L}(\Hi,\Gi)$, $b\in\Gi$, and let $D$ be their associated dual operator as defined in \cref{eq:dual-operator-general}. If
\begin{equation}
\label{eq:CQ-dual-operator}
    0
    \in
    \sri 
    \left(
        \ran F
        -
        \ran A^*
    \right),
\end{equation}
then $D$ is maximally monotone.
\end{fact}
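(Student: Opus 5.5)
The statement to prove is \cref{lem:maximality-dual-operator}: under \cref{eq:CQ-dual-operator}, the dual operator $D=-A\circ F^{-1}\circ(-A^*)+b$ is maximally monotone. The plan is to peel off the harmless parts of $D$ and reduce to a composition-type maximality result for the operator $A F^{-1} A^*$.

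\textbf{Reduction.} Adding the constant vector $b$ translates the graph of an operator in the second coordinate, so it preserves both monotonicity and maximality; we may therefore take $b=0$. Next, $F^{-1}$ is maximally monotone, since $\gra F^{-1}$ is the reflection of $\gra F$ and monotonicity is symmetric under this swap. Set $B:=F^{-1}\circ(-\Id_\Hi)$, i.e.\ $B(u)=F^{-1}(-u)$; then
\[
-A\circ F^{-1}\circ(-A^*)=(-A)\circ F^{-1}\circ(-A)^*=(-A)\,F^{-1}\,(-A)^*,
\]
so $D-b$ has the form $L F^{-1} L^*$ with $L:=-A\in\mathcal{L}(\Hi,\Gi)$. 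Monotonicity of $L F^{-1}L^*$ is immediate: if $u_j\in F^{-1}(L^*y_j)$ for $j=1,2$, then
\[
\langle Lu_1-Lu_2,\,y_1-y_2\rangle=\langle u_1-u_2,\,L^*y_1-L^*y_2\rangle\ge0 .
\]

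\textbf{Maximality.} For maximality I would invoke the standard result \cite[Theorem~25.3 / Corollary~25.6]{bauschke}: if $M:\Hi\tto\Hi$ is maximally monotone, $L\in\mathcal L(\Hi,\Gi)$, and $0\in\sri(\ran L^*-\dom M)$ (in the appropriate orientation), then $L M L^*$ is maximally monotone. Here $M=F^{-1}$, so $\dom M=\ran F$, and $\ran L^*=\ran(-A^*)=\ran A^*$ because $\ran A^*$ is a linear subspace. Since $\sri$ commutes with negation, $0\in\sri(\ran A^*-\ran F)$ is equivalent to $0\in\sri(\ran F-\ran A^*)$, which is exactly \cref{eq:CQ-dual-operator}.

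If this composition theorem is only available for the form $L^*ML$ with a domain qualification, the same argument applies after renaming $L^*\leftrightarrow L$. The ingredient from Proposition~20.22 would then be used only for the facts that inversion and translation preserve maximal monotonicity.

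\textbf{Main obstacle.} The delicate step is matching \cref{eq:CQ-dual-operator} to the exact hypothesis of the cited composition theorem: getting the orientation $\ran L^*-\dom M$ versus its negative right, and using that $\ran A^*$ is a subspace so the sign of $A$ is irrelevant. If the available theorem requires a closed-range or different qualification, I would instead reprove it via the standard route. Write $LML^*$ as the parallel-sum-type operator obtained from the maximally monotone operator $M\times N_{\{0\}}$ composed with an embedding. Maximality then follows from the sum rule (Attouch–Brézis) in $\Hi\times\Gi$, whose $\sri$ condition reduces to \cref{eq:CQ-dual-operator}.
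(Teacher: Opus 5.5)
Your proposal is correct and follows the same argument the paper relies on in its citation. Proposition~20.22 of Bauschke--Combettes gives maximal monotonicity of $F^{-1}$ and its preservation under adding the constant $b$, and Theorem~25.3, applied with the zero operator on $\Gi$, the operator $F^{-1}$ on $\Hi$, and the linear map $-A^*\in\mathcal L(\Gi,\Hi)$ (whose adjoint is $-A$), yields maximality of $(-A)\circ F^{-1}\circ(-A^*)$ under exactly the qualification $0\in\sri(\ran F-\ran A^*)$ that you matched; the auxiliary operator $F^{-1}\circ(-\Id_\Hi)$ you define is never used, and the Attouch--Br\'ezis fallback is not needed.
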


The following specialization illustrates the connection with convex duality.

\begin{remark}
\label{rem:dual-operator-convex}
Suppose that $F=\partial f$ for some $f\in\Gamma_0(\Hi)$, and assume that
\begin{equation}
\label{eq:CQ-dual-operator-convex}
    0 \in  \sri \left(\dom f^* - \ran A^* \right).
\end{equation}
Since $F^{-1}=(\partial f)^{-1}=\partial f^*$, the subdifferential chain rule (\cite[Theorem 16.47]{bauschke}) yields the equality $\partial\bigl(f^*\circ(-A^*)\bigr) = -A\circ\partial f^*\circ(-A^*)$. Thus, $D=\partial\left(f^*\circ(-A^*)+\langle b,\cdot\rangle\right)$ is maximally monotone. 

Condition \cref{eq:CQ-dual-operator-convex} is satisfied, for instance, whenever $\ran A^*\cap\int(\dom f^*)\neq\emptyset$, or, if $\Hi$ is finite-dimensional, when $\ran A^*\cap\ri (\dom f^*)\neq\emptyset$ (see, e.g., \cite[Proposition~6.19(vii)--(viii)]{bauschke}).
\end{remark}

\subsection{Graph theory}\label{sec: 2.3}

We say that $G=(\mathcal{N},\mathcal{E})$ is a (\emph{directed}) \emph{graph} if $\mathcal{N}$ is a finite set (whose elements are called \emph{nodes}) and $\mathcal{E}\subseteq\mathcal{N}\times\mathcal{N}$ (whose elements are called \emph{edges}). The \emph{order} of the graph is the number of its nodes, which we denote by $n=|\mathcal{N}|$. Further, we say that $G'=(\mathcal{N},\mathcal{E}')$ is a \emph{spanning subgraph} of $G$ if $\mathcal{E}'\subseteq\mathcal{E}$. The \emph{in-degree}, \emph{out-degree} and \emph{degree} are defined respectively as $d_i^{\rm in}:=|\{j\in\mathcal{N}:(j,i)\in\mathcal{E}\}|$, $d_i^{\rm out}:=|\{j\in\mathcal{N}:(i,j)\in\mathcal{E}\}|$ and $d_i:=d_i^{\rm in}+d_i^{\rm out}$. A \emph{path} in $G$ is a finite sequence of distinct nodes $(v_1,\ldots,v_r)$ with $r\geq2$ such that $(v_k,v_{k+1})\in\mathcal{E}$ or $(v_{k+1},v_{k})\in\mathcal{E}$  for all $k=1,\ldots,r-1$. The nodes $v_1$ and $v_r$ are called \emph{endpoints}. A graph $G$ is said to be \emph{connected} if for all distinct nodes $i,j\in\mathcal{N}$, there exists a path with $i$ and $j$ as endpoints. Further, $G$ is a \emph{tree} if it is connected and $|\mathcal{E}|=n-1$.

The \emph{degree matrix} of $G$ is the diagonal matrix $\operatorname{Deg}(G):=\Diag(d_1,\ldots,d_n)$. The \emph{degree balance} of $G$ is the vector $\bm{\delta}\in\mathbb{R}^n$ defined componentwise as
\[\delta_i:=d_i-2d_i^\text{in}=d_i^\text{out}-d_i^\text{in}, \quad \forall i\in\{1,\ldots,n\}.\]
The \emph{adjacency matrix} of $G$ is the matrix $\operatorname{Adj}(G)\in\mathbb{R}^{n\times n}$ whose components are defined~as
    \[\operatorname{Adj}(G)_{i,j}:=\begin{cases}
    	1 & \text{if }(i,j)\in\mathcal{E},\\
    	0 & \text{otherwise.}
    \end{cases}\]
The \emph{Laplacian matrix} of $G$ is the matrix $\operatorname{Lap}(G):=\operatorname{Deg}(G)-\operatorname{Adj}(G)-\operatorname{Adj}(G)^*$. Notice that, since the order of a graph is finite, so is the number of edges. Hence, we can also index $\mathcal{E}$ with natural numbers. Then, the \emph{incidence matrix} of $G$ is the matrix $\operatorname{Inc}(G)\in\mathbb{R}^{n\times |\mathcal{E}|}$ defined componentwise as
    \[\operatorname{Inc}(G)_{i,e}:=\begin{cases}
        1 & \text{if the edge }e\text{ leaves the node }i,\\
        -1 & \text{if the edge }e\text{ enters the node }i,\\
        0 & \text{otherwise}.
    \end{cases}\]

In this work, all graphs are assumed to satisfy the following structure, which is suitable for defining splitting algorithms. 

\begin{definition}\label{def: algorithmic graph}
We say that $G=(\mathcal{N},\mathcal{E})$ is an \emph{algorithmic graph} if $\mathcal{N}=\{1,\ldots,n\}$ with $n\geq2$, $(i,j)\in\mathcal{E}\Rightarrow i<j$, and $G$ is connected.
\end{definition}

\begin{fact}[{\cite[Fact 2.17]{graph-linear}}]\label{fact: Zdecom}
Let $G$ be an algorithmic graph of order $n$. The Laplacian matrix always has an onto decomposition, in the sense that 
\[\operatorname{Lap}(G)=ZZ^\ast,\quad\text{for some } Z\in \mathbb{R}^{n\times (n-1)}.\]
This implies $\rk Z=n-1$, $\ker Z=\{0\}$, and $\ker Z^\ast=\operatorname{span}\{\bm{1}_n\}$. 
\end{fact}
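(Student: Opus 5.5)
The plan is to build $Z$ directly from the incidence matrix and then read off the kernel facts from the connectivity of $G$.

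\textbf{Step 1: factor the Laplacian through the incidence matrix.} Set $B:=\Inc(G)\in\mathbb{R}^{n\times|\mathcal{E}|}$. For the diagonal entries, $(BB^*)_{i,i}=\sum_e B_{i,e}^2$ counts every edge incident to node $i$. Since $(i,j)\in\mathcal{E}\Rightarrow i<j$, there are no loops and no pair of opposite edges, so this count is $d_i$. For $i\neq j$, an edge $e$ contributes $B_{i,e}B_{j,e}=-1$ exactly when it joins $i$ and $j$. Hence
\[
(BB^*)_{i,j}=-\operatorname{Adj}(G)_{i,j}-\operatorname{Adj}(G)_{j,i},
\]
and therefore $\Lap(G)=BB^*$. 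In particular $\Lap(G)$ is symmetric positive semidefinite.

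\textbf{Step 2: compute the kernel of the Laplacian.} If $\Lap(G)\bm v=0$, then $0=\langle \bm v,BB^*\bm v\rangle=\|B^*\bm v\|^2$. Thus $v_i=v_j$ for every edge $(i,j)$. Connectivity propagates this equality along paths, so $\ker\Lap(G)=\operatorname{span}\{\bm 1_n\}$. Conversely $B^*\bm 1_n=0$, since each column of $B$ contains exactly one $+1$ and one $-1$. It follows that $\rk\Lap(G)=n-1$.

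\textbf{Step 3: construct $Z$.} Use the spectral theorem to write $\Lap(G)=U\Lambda U^*$, where $U$ is orthogonal and $\Lambda=\Diag(\lambda_1,\ldots,\lambda_{n-1},0)$ with every $\lambda_i>0$. Let $U_{n-1}$ consist of the first $n-1$ columns of $U$, and define
\[
Z:=U_{n-1}\Diag(\sqrt{\lambda_1},\ldots,\sqrt{\lambda_{n-1}})\in\mathbb{R}^{n\times(n-1)}.
\]
Then $ZZ^*=\Lap(G)$ follows immediately from the spectral decomposition.

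\textbf{Step 4: deduce the kernel properties.} We have $\rk Z=\rk ZZ^*=n-1$. Since $Z$ has $n-1$ columns, this forces $\ker Z=\{0\}$. Also $\ker Z^*=\ker ZZ^*$, because $ZZ^*\bm v=0$ implies $\|Z^*\bm v\|^2=0$. Combined with Step 2, this gives $\ker Z^*=\operatorname{span}\{\bm 1_n\}$.

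The only point requiring care is the kernel computation in Step 2, which is where connectivity is used. The rest is routine linear algebra. Note also that any $Z$ with $ZZ^*=\Lap(G)$ automatically satisfies the three consequences, by the argument of Step 4. This covers the ``this implies'' part of the statement independently of how $Z$ is chosen.
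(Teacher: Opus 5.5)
Your argument is correct and follows essentially the standard route behind the cited fact. The paper gives no proof of its own; it only cites the reference and remarks that an onto decomposition can always be obtained from the spectral decomposition of the Laplacian, which is your construction: $\Lap(G)=\Inc(G)\Inc(G)^*$ gives positive semidefiniteness, connectivity gives $\ker\Lap(G)=\operatorname{span}\{\bm 1_n\}$, and the spectral square-root factor with $n-1$ columns yields $Z$. Your closing observation, that the rank and kernel properties hold for every $Z\in\mathbb{R}^{n\times(n-1)}$ with $ZZ^*=\Lap(G)$ because $\ker Z^*=\ker ZZ^*$, is exactly what the ``this implies'' clause needs.
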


Numerically, an onto decomposition can always be computed by means of the spectral decomposition of the Laplacian. Under particular graph structures, sparser or more manageable decompositions can be obtained. For instance, we can take $Z=\Inc(G)$ whenever $G$ is a tree (see~\cite[Fact 2.17]{graph-linear} for further details).

\section{A graph-based family of multiblock ADMM algorithms}
\label{sec:graph-admm}

In this section, we construct a family of multiblock ADMM-type algorithms for solving~\cref{primal}. Our approach consists of associating our problem with a dual monotone inclusion problem where the graph splitting framework of~\cite{graph-drs} can be applied. 

\subsection{Construction of the dual inclusion}
\label{sec:dual-construction}

Recall that \cref{primal} is determined by maximally monotone operators \(F_i:\Hi_i\rightrightarrows\Hi_i\) and bounded linear operators \(A_i\in\mathcal{L}(\Hi_i,\Gi)\), for $i=1,\ldots,n$, together with a vector $b\in\Gi$.  Fix any $\bm{b}\in\Gi^n$ such that
\begin{equation}
\label{eq:decomposition-b}
  \bm{b}:=(b_1,\ldots,b_n),\quad\text{with }  \sum_{i=1}^n b_i=b,
\end{equation}
and, for each $i=1,\ldots,n$, consider the dual operator 
$D_i:\Gi\tto\Gi$ associated with $(F_i,A_i,b_i)$ as in \cref{def_D}; i.e.,
\begin{equation}
\label{eq:dual-operator}
    D_i
    :=
    -A_i\circ F_i^{-1}\circ(-A_i^*)+b_i.
\end{equation}
This leads to the following natural extension to $n$ operators of
the dual inclusion considered in
\cite[Theorem~2.3]{gabayduality}:
\begin{equation}
\tag{D}
\label{dual}
    \text{find }y\in\Gi
    \quad\text{such that}\quad
    0 \in\sum_{i=1}^nD_i(y).
\end{equation}

The following proposition relates the primal system
\cref{primal} to the dual inclusion \cref{dual}.

\begin{proposition}
\label{prop:primal-dual-equivalence}
Let $y\in\Gi$. Then $y$ solves \cref{dual} if and only if there
exists $\bm{x}=(x_1,\ldots,x_n)\in\bHi$ such that
$(\bm{x},y)$ solves \cref{primal}.
\end{proposition}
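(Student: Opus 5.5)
The plan is to unfold the definition of the sum $\sum_{i=1}^n D_i(y)$ and match it term by term with the primal system, using only the definition of the inverse operator $F_i^{-1}$ and the decomposition \cref{eq:decomposition-b} of $b$. No maximal monotonicity or qualification condition is needed, since the statement is purely set-theoretic.

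For the forward direction, suppose $0\in\sum_{i=1}^n D_i(y)$. By \cref{eq:dual-operator}, this means there exist $v_i\in D_i(y)$, $i=1,\ldots,n$, with $\sum_{i=1}^n v_i=0$. Each membership $v_i\in D_i(y)=-A_i\bigl(F_i^{-1}(-A_i^*y)\bigr)+b_i$ provides some $x_i\in F_i^{-1}(-A_i^*y)$ with $v_i=-A_ix_i+b_i$. By the definition of the inverse, $x_i\in F_i^{-1}(-A_i^*y)$ is equivalent to $-A_i^*y\in F_i(x_i)$, that is, $0\in F_i(x_i)+A_i^*y$. This gives the first $n$ inclusions of \cref{primal}. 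Summing the relations $v_i=-A_ix_i+b_i$ and using $\sum_i v_i=0$ and $\sum_i b_i=b$ yields $\sum_{i=1}^n A_ix_i=b$. Hence $(\bm{x},y)$ solves \cref{primal}.

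For the converse, let $(\bm{x},y)$ solve \cref{primal}. Then $-A_i^*y\in F_i(x_i)$, so $x_i\in F_i^{-1}(-A_i^*y)$. Setting $v_i:=-A_ix_i+b_i$ gives $v_i\in D_i(y)$. Moreover, $\sum_{i=1}^n v_i=-\sum_{i=1}^n A_ix_i+\sum_{i=1}^n b_i=-b+b=0$, so $0\in\sum_{i=1}^n D_i(y)$.

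No step is a real obstacle. The only care needed is the bookkeeping of the elementwise meaning of the Minkowski sum of sets and of the composition of set-valued maps in \cref{eq:dual-operator}, and the observation that the arbitrary splitting $\bm{b}$ of $b$ only enters through its total $\sum_i b_i=b$. This is also why the equivalence holds for every admissible choice of $\bm b$.
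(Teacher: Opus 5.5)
Your proof is correct and follows essentially the same route as the paper. Both arguments unfold $0\in\sum_i D_i(y)$ into selections $v_i=-A_ix_i+b_i$ with $-A_i^*y\in F_i(x_i)$, and then use $\sum_i b_i=b$ to identify $\sum_i v_i=0$ with the constraint $\sum_i A_ix_i=b$; the only difference is that you write out the converse explicitly, whereas the paper leaves it implicit in its chain of equivalences.
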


\begin{proof}
    This is a natural extension of the proof of \cite[Theorem~2.3]{gabayduality}.
A point $y\in\Gi$ solves \cref{dual} if and only if there exist
$p_i\in D_i(y)$, for $i=1,\ldots,n$, such that
\begin{equation}
\label{eq:dual-selections}
    \sum_{i=1}^n p_i=0.
\end{equation}
In view of \cref{eq:dual-operator}, the condition
$p_i\in D_i(y)$ is equivalent to the existence of some
$x_i\in\Hi_i$ satisfying
\[
    p_i=-A_ix_i+b_i,\quad \text{with } 0\in F_i(x_i)+A_i^*y,\quad \forall i=1,\ldots,n.
\]
Substituting these expressions for $p_i$ into
\cref{eq:dual-selections} and using
\cref{eq:decomposition-b} we obtain $\sum_{i=1}^nA_ix_i=b$, so it follows that $(\bm{x},y)$ solves \cref{primal}.
\end{proof}

\subsection{Derivation of the algorithm}
\label{sec:graph-admm-algorithm}

We impose the following standing assumption on the primal system \cref{primal}.

\begin{assumption}
\label{as:standing}
The following conditions hold:
\begin{enumerate}[leftmargin=1.8cm,label=(A\arabic*)]

    \item
    \label{A1}
    The primal system \cref{primal} has at least one solution.

    \item
    \label{A2}
    For every $i=1,\ldots,n$, the inclusion
    \( 0 \in \sri \left(\ran F_i - \ran A_i^* \right)\)
    holds.

\end{enumerate}
\end{assumption}

In addition to \cref{as:standing}, our algorithm relies on a choice of graph structures that determine the interactions between the variables. We fix this graph setting once, as follows.

\begin{definition}[Graph setting]
\label{def:graph-setting}
Let $G=(\mathcal N,\mathcal E)$ be an algorithmic graph of order $n$, and denote by $d_i$ the degree of node $i\in\mathcal N$. Let $G'=(\mathcal N,\mathcal E')$ be a connected spanning subgraph of $G$, and fix an onto decomposition
\begin{equation}
\label{eq:Z-decomposition}
    \operatorname{Lap}(G')=ZZ^*,
    \quad
    Z\in\R^{n\times(n-1)},
\end{equation}
as described in \cref{fact: Zdecom}. We refer to $(G,G',Z)$ as a
\emph{graph setting}.
\end{definition}

We are now in a position to state our graph-based multiblock ADMM in
\cref{alg:graphADMM}.

\begin{algorithm}[ht]
    \caption{Graph-based family of multiblock ADMM}
    \label{alg:graphADMM}
    \begin{algorithmic}[1]\setlength{\itemsep}{3pt}
        \State \textbf{input:}
problem data $(F_i,A_i)_{i=1}^n$ and $b$ of \cref{primal},
a decomposition $b=\sum_{i=1}^n b_i$, a graph setting
$(G,G',Z)$ as in \cref{def:graph-setting} (with degrees $d_1,\ldots,d_n$), a parameter $\tau>0$,
and a sequence of relaxation parameters
$(\theta_k)_{k\geq1}\subset{]\epsilon,2-\epsilon[}$ for some
$\epsilon\in]0,1[$

        \State \textbf{initialize:}
        $w_1^0,\ldots,w_{n-1}^0\in\Gi$

        \For{$k=0,1,2,\ldots$}
            \For{$i=1,\ldots,n$}

                \State
                \(
                    u_i^{k+1}
                    =
                    \sum_{j=1}^{n-1}Z_{i,j}w_j^k
                    +
                    2\sum_{(h,i)\in\mathcal E}y_h^{k+1}
                \)

                \State
                \(
                    x_i^{k+1}
                    \in
                    \left(
                        F_i+\frac{\tau}{d_i}A_i^*A_i
                    \right)^{-1}
                    \left(
                        -\frac1{d_i}
                        A_i^*(u_i^{k+1}-\tau b_i)
                    \right)
                \)
                \algcomment{primal sequence}

                \State
                \(
                    r_i^{k+1}
                    :=
                    A_ix_i^{k+1}-b_i
                \)
                \algcomment{residual sequence}

                \State
                \(
                    y_i^{k+1}
                    :=
                    \frac1{d_i}
                    \left(
                        u_i^{k+1}+\tau r_i^{k+1}
                    \right)
                \)
                \algcomment{dual shadow sequence}

            \EndFor

            \For{$j=1,\ldots,n-1$}

                \State
                \(
                    w_j^{k+1}
                    =
                    w_j^k
                    -
                    \theta_{k+1}
                    \sum_{i=1}^n Z_{i,j}y_i^{k+1}
                \)
                \algcomment{governing sequence}

            \EndFor
        \EndFor
    \end{algorithmic}
\end{algorithm}
We note that \cref{alg:graphADMM} is well defined. First, since $G$ is an algorithmic graph, the update of $u_i^{k+1}$ only involves
$y_h^{k+1}$ for $h<i$, which have already been computed in the
previous steps. It remains to check the existence of $x_i^{k+1}$ in step 6, which we show next.

\begin{lemma}[Well-posedness of \cref{alg:graphADMM}]
\label{lem:well-posed}
Suppose that \cref{as:standing} holds. Then, for each $i=1,\ldots,n$, there exists $x_i^{k+1}\in\Hi_i$ satisfying step 6 of \cref{alg:graphADMM}.
\end{lemma}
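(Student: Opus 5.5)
We reduce step 6 to the Complete Moreau Identity (\cref{thm:GMI}), which applies to the dual operator $D_i$ of \cref{eq:dual-operator}. Rewriting step 6 in that form will show that its solvability is the same as the resolvent $J_{\gamma D_i}$ being nonempty at a suitable point. That resolvent is nonempty, and in fact single-valued with full domain, as soon as $D_i$ is maximally monotone.

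\textbf{Step 1: rescaling.} Fix $i$ and $k$, and set $\gamma:=\tau/d_i$. Since $G$ is connected with $n\ge 2$, we have $d_i\ge1$, so $\gamma>0$ is well defined. Step 6 asks for
\[
x_i^{k+1}\in\bigl(F_i+\gamma A_i^*A_i\bigr)^{-1}\Bigl(-A_i^*\bigl(y-\gamma b_i\bigr)\Bigr),
\qquad y:=\tfrac1{d_i}u_i^{k+1}.
\]
This uses the identity $\tfrac1{d_i}(u_i^{k+1}-\tau b_i)=y-\gamma b_i$. The right-hand side is exactly the set appearing in \cref{thm:GMI}\ref{item:GMI_3} with data $(F_i,A_i,b_i)$ and parameter $\gamma$.

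\textbf{Step 2: maximal monotonicity of $D_i$.} By \ref{A2}, $0\in\sri(\ran F_i-\ran A_i^*)$. Since $F_i$ is maximally monotone, \cref{lem:maximality-dual-operator} shows that $D_i$ is maximally monotone, and so is $\gamma D_i$. Minty's theorem then says that $J_{\gamma D_i}$ is single-valued with full domain. Hence there is $\lambda\in J_{\gamma D_i}(y)$ for the $y$ defined in Step 1.

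\textbf{Step 3: extracting $x_i^{k+1}$.} Apply the ``moreover'' part of \cref{thm:GMI} to this $\lambda$. It gives $z\in\Hi_i$ with $z\in(F_i+\gamma A_i^*A_i)^{-1}(-A_i^*(y-\gamma b_i))$, by \ref{item:GMI_3}. Taking $x_i^{k+1}:=z$ satisfies step 6.

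\textbf{Main obstacle and a fallback.} The only delicate point is that \cref{thm:GMI} is used as a purely set-theoretic identity. Nonemptiness of the left-hand side $J_{\gamma D}(y)$ is what forces the preimage set on the right-hand side to be nonempty. If one prefers not to lean on the ``moreover'' part, there is a direct route. Unfold $\lambda\in J_{\gamma D_i}(y)$ to get $v\in D_i(\lambda)$ with $y=\lambda+\gamma v$. Then pick $z\in F_i^{-1}(-A_i^*\lambda)$ with $v=-A_iz+b_i$, and verify
\[
-A_i^*\lambda=-A_i^*\bigl(y-\gamma b_i\bigr)-\gamma A_i^*A_iz,
\]
so that $-A_i^*(y-\gamma b_i)\in F_i(z)+\gamma A_i^*A_iz$. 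Assumption \ref{A1} is not needed for this argument.
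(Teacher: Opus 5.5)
Your proposal is correct and follows essentially the same route as the paper's proof. You set $\gamma=\tau/d_i$ and $y=u_i^{k+1}/d_i$, use (A2) with the maximality fact to obtain $\lambda\in J_{\gamma D_i}(y)$, and read off $x_i^{k+1}$ from item (iii) of the complete Moreau identity. Your fallback computation only unpacks items (i)--(iii) of that identity and is also correct, as is your remark that (A1) is not needed here.
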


\begin{proof}
Fix $i\in\{1,\ldots,n\}$ and set $\gamma:=\tau / d_i$, and \(y:=u_i^{k+1} / d_i\).
By \cref{as:standing}\ref{A2} and
\cref{lem:maximality-dual-operator}, the dual operator $D_i$ is
maximally monotone. Hence, $J_{\gamma D_i}$ has full domain, so there
exists $\lambda\in J_{\gamma D_i}(y)$. Taking $y=u_i^{k+1}/d_i$ and $z=x^{k+1}_i$ in
\cref{thm:GMI}\ref{item:GMI_3} completes the proof.
\end{proof}

\begin{remark}[On the uniqueness of the generated sequences]\label{rem:uniq}
In general, $x_i^{k+1}$ satisfying step~6 of \cref{alg:graphADMM} need not be unique, which is why step~6 is stated as an inclusion rather than an equality. This is inherited from the possible non-uniqueness of the point $z$ in \cref{thm:GMI}\ref{item:GMI_3}. Uniqueness would hold, for instance, whenever $F_i+\frac{\tau}{d_i}A_i^*A_i$ is strictly monotone (in particular, if $A_i$ is injective). However, the feasibility residual 
\[r_i^{k+1}=A_ix_i^{k+1}-b_i,\]
appearing in step~7 of \cref{alg:graphADMM}, is always uniquely determined regardless of the
choice of~$x_i^{k+1}$. Indeed, if  $x,\widetilde x$ both satisfy step~6 of \cref{alg:graphADMM} for the same $u_i^{k+1}$, then, writing
$v:=-\frac{1}{d_i}A_i^*(u_i^{k+1}-\tau b_i)$, the monotonicity of $F_i$
gives
\[
    0
    \le
    \Big\langle x-\widetilde x,\
    \big(v-\tfrac{\tau}{d_i}A_i^*A_ix\big)
    -
    \big(v-\tfrac{\tau}{d_i}A_i^*A_i\widetilde x\big)
    \Big\rangle
    =
    -\tfrac{\tau}{d_i}\|A_i(x-\widetilde x)\|^2,
\]
forcing $A_ix=A_i\widetilde x$. Hence, although the primal iterates $x_i^{k+1}$ need not be unique, the residuals $r_i^{k+1}$ and, consequently, all subsequent auxiliary, dual and governing sequences are uniquely determined independently of this particular selection.
\end{remark}

\subsection{Convergence analysis}
\label{sec:convergence}

Before establishing the convergence of \cref{alg:graphADMM}, we show first its equivalence with the graph splitting method of \cite[\S~3]{graph-drs} applied to \cref{dual}. Throughout the paper, boldface notation in sequences is used for product vectors formed by block-indexed variables; for instance,
\[
    \bm y^k:=(y_i^k)_{i=1}^n,\quad
    \bm w^k:=(w_j^k)_{j=1}^{n-1},
\]
and analogously for the other block variables.

\begin{lemma}[Equivalence with the graph splitting method]
\label{lem:equiv}
The dual sequence $(\bm y^k)_{k\geq1}$ and the governing sequence $(\bm w^k)_{k\geq0}$ in \cref{alg:graphADMM} coincide with those generated by the graph splitting method of
\cite[Algorithm~3.1]{graph-drs} applied to $D_1,\ldots,D_n$ defined
in \cref{eq:dual-operator}, i.e.,
\begin{subequations}\label{eq:graph-splitting}
\begin{empheq}[left=\empheqlbrace]{align}
    y_i^{k+1}
    &=
    J_{\frac{\tau}{d_i}D_i}
    \left(
        \frac{2}{d_i}
        \textstyle\sum_{(h,i)\in\mathcal E}y_h^{k+1}
        +
        \frac{1}{d_i}
        \textstyle\sum_{j=1}^{n-1}Z_{i,j}w_j^k
    \right),
    \quad \text{for } i=1,\ldots,n,
    \label{eq:graph-splitting-y}
    \\
    \bm w^{k+1}
    &=
    \bm w^k-\theta_{k+1}Z^*\bm y^{k+1}.
    \label{eq:graph-splitting-w}
\end{empheq}
\end{subequations}
Consequently, under \cref{as:standing}, the following hold:
\begin{enumerate}[label={\rm(\roman*)}]
    \item
    $\bm w^k\wto\bm w^\star$ for some $\bm w^\star\in\Gi^{n-1}$;\label{lem:equiv_w}

    \item
    $\bm y^k\wto\bm1_ny^\star$ for some $y^\star\in\Gi$ such that $0\in\sum_{i=1}^nD_i(y^\star)$;\label{lem:equiv_y}

    \item
    $\|Z^*\bm y^k\|^2=o(1/k)$; \label{lem:equiv_Zy}

    \item
    $\|\bm y^k-\bm1_n\overline y^k\|^2=o(1/k)$, where $\overline y^k:=\tfrac1n\bm1_n^*\bm y^k$.    \label{lem:equiv_ybar}
\end{enumerate}
\end{lemma}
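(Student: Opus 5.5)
The plan has two stages. First we verify the algebraic identity between \cref{alg:graphADMM} and \cref{eq:graph-splitting}. Then we transfer the convergence results of \cite{graph-drs} to the dual operators $D_1,\ldots,D_n$.

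For the identity, fix $k$ and $i$ and proceed by induction on $i$, assuming $y_h^{k+1}$ agree for $h<i$. Set $\gamma:=\tau/d_i$ and $y:=u_i^{k+1}/d_i$. By step 5 of the algorithm, $y$ is exactly the argument of the resolvent in \cref{eq:graph-splitting-y}. We then apply \cref{thm:GMI} to $(F_i,A_i,b_i)$ with this $\gamma$ and $y$. Since $D_i$ is maximally monotone by \cref{as:standing}\ref{A2} and \cref{lem:maximality-dual-operator}, $J_{\gamma D_i}(y)$ is a singleton. Formula \cref{GMI_resolvent} then gives
\[
J_{\gamma D_i}(y)=y-\gamma b_i+\gamma A_i z,
\qquad z\in\bigl(F_i+\gamma A_i^*A_i\bigr)^{-1}\bigl(-A_i^*(y-\gamma b_i)\bigr).
\]
Since $-A_i^*(y-\gamma b_i)=-\tfrac1{d_i}A_i^*(u_i^{k+1}-\tau b_i)$, any admissible $x_i^{k+1}$ from step 6 qualifies as $z$. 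Hence
\[
J_{\gamma D_i}(y)=\tfrac1{d_i}\bigl(u_i^{k+1}+\tau(A_ix_i^{k+1}-b_i)\bigr)=y_i^{k+1}.
\]
By \cref{rem:uniq}, this does not depend on which $x_i^{k+1}$ is selected. Finally, step 11 of the algorithm is the componentwise form of \cref{eq:graph-splitting-w}, because $(Z^*\bm y)_j=\sum_i Z_{i,j}y_i$.

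For items \ref{lem:equiv_w}--\ref{lem:equiv_ybar}, we invoke the convergence theorem of \cite{graph-drs} for Algorithm~3.1. We check its hypotheses in turn:
\begin{itemize}
\item each $D_i$ is maximally monotone, as noted above;
\item $\operatorname{zer}\sum_i D_i\neq\emptyset$, by \cref{A1} and \cref{prop:primal-dual-equivalence};
\item $G$ is algorithmic and $G'$ is a connected spanning subgraph with $\operatorname{Lap}(G')=ZZ^*$;
\item the relaxation parameters satisfy $\theta_k\in{]\epsilon,2-\epsilon[}$, and the step sizes $\tau/d_i$ have the form the method requires.
\end{itemize}
That theorem yields weak convergence of $\bm w^k$ to a fixed point $\bm w^\star$ of the underlying averaged operator. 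It also yields weak convergence of the shadow sequence $\bm y^k$ to $\bm 1_n y^\star$ with $y^\star$ a zero of $\sum_i D_i$, which gives \ref{lem:equiv_w} and \ref{lem:equiv_y}.

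For \ref{lem:equiv_Zy}, write $\bm w^{k+1}-\bm w^k=-\theta_{k+1}Z^*\bm y^{k+1}$. The standard Krasnosel'ski\u{\i}--Mann estimate for averaged operators gives $\|\bm w^{k+1}-\bm w^k\|^2=o(1/k)$. Since $\theta_k>\epsilon$, it follows that $\|Z^*\bm y^{k+1}\|^2=o(1/k)$.

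For \ref{lem:equiv_ybar}, note that $\bm y^k-\bm 1_n\overline y^k$ is the orthogonal projection of $\bm y^k$ onto $(\operatorname{span}\bm 1_n)^\perp=(\ker Z^*)^\perp=\ran Z$, using \cref{fact: Zdecom}. On $\ran Z$, the operator $Z^*$ is bounded below by the smallest positive singular value $\sigma>0$ of $Z$. This gives
\[
\|\bm y^k-\bm 1_n\overline y^k\|\le\sigma^{-1}\|Z^*(\bm y^k-\bm 1_n\overline y^k)\|=\sigma^{-1}\|Z^*\bm y^k\|,
\]
and \ref{lem:equiv_ybar} follows from \ref{lem:equiv_Zy}.

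The main obstacle is matching normalizations with \cite{graph-drs}. We must confirm that their Algorithm~3.1, with resolvent parameters $\tau/d_i$ and weights $2/d_i$ and $1/d_i$, is precisely \cref{eq:graph-splitting}. If their theorem is stated with a uniform step size, we would absorb $\tau$ by applying it to the scaled operators $\tau D_i$, which have the same zeros. If the $o(1/k)$ rate is not stated there, we would derive it from the averagedness of their fixed-point operator by the standard argument.
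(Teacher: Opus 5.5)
Your proposal is correct. The first stage, identifying the $y$-updates through \cref{thm:GMI} with $\gamma=\tau/d_i$ and $y=u_i^{k+1}/d_i$, matches the paper's proof. So does obtaining \ref{lem:equiv_w}--\ref{lem:equiv_y} from \cite[Theorem~3.2]{graph-drs} after checking maximal monotonicity via \ref{A2} and a dual zero via \ref{A1}.

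Your route differs only for \ref{lem:equiv_Zy} and \ref{lem:equiv_ybar}. The paper cites \cite[Equation~(3.21)]{graph-drs} and \cite[Proposition~3.5]{graph-drs} for these. You instead rederive \ref{lem:equiv_Zy} from the Krasnosel'ski\u{\i}--Mann $o(1/k)$ residual estimate. That step relies on two ingredients:
\begin{itemize}
\item the fact, established in \cite{graph-drs}, that the reduced map $\bm w\mapsto\bm w-Z^*\bm y(\bm w)$ is firmly nonexpansive in the standard norm of $\Gi^{n-1}$;
\item the monotone decrease of the fixed-point residual.
\end{itemize}
Your deduction of \ref{lem:equiv_ybar} from \ref{lem:equiv_Zy} by a spectral-gap argument is also sound. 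After lifting, $\ker Z^*=\{\bm 1_n y: y\in\Gi\}$, so $\bm y^k-\bm 1_n\overline y^k$ is the orthogonal projection onto $\ran Z$. On that subspace, $\|Z^*\bm v\|^2=\langle \Lap(G')\bm v,\bm v\rangle\ge\lambda_2\|\bm v\|^2$. Thus your $\sigma^2$ is the algebraic connectivity $\lambda_2>0$ of $G'$.

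What your version buys is self-containedness and an explicit constant. The consensus error is bounded by $\lambda_2^{-1/2}\|Z^*\bm y^k\|$, and both factors depend only on $\Lap(G')$, not on the chosen onto decomposition $Z$. The paper's version is shorter but defers these rates entirely to \cite{graph-drs}.

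A minor point: invoking \ref{A2} to make $J_{\gamma D_i}(y)$ a singleton in the identity part is harmless but not needed. \cref{rem:uniq}, which uses only monotonicity of $F_i$, already makes the right-hand side of \cref{GMI_resolvent} a single point whenever step~6 is solvable.
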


\begin{proof}
Fix $i\in\{1,\ldots,n\}$. Applying \cref{thm:GMI} with $D=D_i$, $\gamma=\tau/d_i$, and $y=u_i^{k+1}/d_i$, we obtain
\[
    J_{\frac{\tau}{d_i}D_i}
    \left(\frac{1}{d_i}u_i^{k+1}\right)
    =
    \frac{1}{d_i}u_i^{k+1}
    -\frac{\tau}{d_i}b_i
    +\frac{\tau}{d_i}A_ix_i^{k+1}
    =
    \frac{1}{d_i}
    \left(u_i^{k+1}+\tau r_i^{k+1}\right)
    =
    y_i^{k+1},
\]
where we have used the fact that 
\( x_i^{k+1} \in (F_i+\frac{\tau}{d_i}A_i^*A_i)^{-1} (-\frac{1}{d_i}A_i^*(u_i^{k+1}-\tau b_i))\),
together with the definitions of $r_i^{k+1}$ and $y_i^{k+1}$ in \cref{alg:graphADMM}. Combining this with the definition of $u_i^{k+1}$ yields~\cref{eq:graph-splitting-y}. On the other hand, the update of $w_j^{k+1}$ in \cref{alg:graphADMM} written in vector form directly gives \cref{eq:graph-splitting-w}. Therefore $(\bm y^k)_{k\geq1}$ and $(\bm w^k)_{k\geq0}$ coincide with the sequences generated by the graph splitting method in \cite[Algorithm~3.1]{graph-drs} applied to~\cref{dual}.

Suppose that \cref{as:standing} holds. Then by \ref{A1} and \cref{prop:primal-dual-equivalence}, the dual inclusion~\cref{dual} admits at least one solution. On the other hand, \ref{A2} and \cref{lem:maximality-dual-operator} imply that the operators $D_1,\ldots,D_n$ are maximally monotone. Hence, we can apply \cite[Theorem~3.2]{graph-drs} to obtain \ref{lem:equiv_w} and \ref{lem:equiv_y}. Moreover, \ref{lem:equiv_Zy} follows from \cite[Equation~(3.21)]{graph-drs}, whereas \cite[Proposition~3.5]{graph-drs} gives~\ref{lem:equiv_ybar}.
\end{proof}

We now turn to the convergence of the graph-based multiblock ADMM algorithm.

\begin{theorem}[Convergence of \cref{alg:graphADMM}]
\label{thm:convergence}
Suppose that \cref{as:standing} holds, and let the sequences
$(\bm x^k)_{k\geq1}\subseteq\bHi$,
$(\bm w^k)_{k\geq0}\subseteq\Gi^{n-1}$,
$(\bm r^k)_{k\geq1}\subseteq\Gi^n$, and
$(\bm y^k)_{k\geq1}\subseteq\Gi^n$
be generated by \cref{alg:graphADMM}. Then there exist
$y^\star\in\Gi$ solving \cref{dual} and
$\bm w^\star\in\Gi^{n-1}$ such that:
\begin{enumerate}[label={\rm(\roman*)}]
\item
\(\bm w^k\rightharpoonup\bm w^\star\), and \(\|\bm w^{k+1}-\bm w^k\|^2=o(1/k)\); \label{thm:convergence_w}

\item
\(\bm y^k\wto\bm1_ny^\star\); \label{thm:convergence_y}

\item
$\|\bm1_n^*\bm r^k\|^2 = \|\sum_{i=1}^n A_ix_i^k-b\|^2=o(1/k)$; \label{thm:convergence_r}

\item
for every weak sequential cluster point $\bm x$ of $(\bm x^k)_{k\geq1}$ the pair $(\bm x,y^\star)$ solves
\cref{primal}; \label{thm:convergence_x}

\item
there exists $\bm x^\star\in\bHi$ such that $(\bm x^\star,y^\star)$ solves \cref{primal} and $A_ix_i^k\wto A_ix_i^\star$, for all $i=1,\ldots,n$. \label{thm:convergence_z}
\end{enumerate}
\end{theorem}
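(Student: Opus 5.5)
Items \ref{thm:convergence_w} and \ref{thm:convergence_y} follow from \cref{lem:equiv}\ref{lem:equiv_w}--\ref{lem:equiv_y}. The rate $\|\bm w^{k+1}-\bm w^k\|^2=o(1/k)$ follows from \cref{lem:equiv}\ref{lem:equiv_Zy}: since $\bm w^{k+1}-\bm w^k=-\theta_{k+1}Z^*\bm y^{k+1}$ and $\theta_{k+1}<2$, we get $\|\bm w^{k+1}-\bm w^k\|^2\le 4\|Z^*\bm y^{k+1}\|^2$.

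For \ref{thm:convergence_r}, I would express $\bm 1_n^*\bm r^{k}$ through the dual quantities. From step 8, $\tau r_i^{k}=d_iy_i^{k}-u_i^{k}$, and by step 5,
\[
\tau\bm r^{k}=\operatorname{Deg}(G)\bm y^{k}-2\operatorname{Adj}(G)^*\bm y^{k}-Z\bm w^{k-1}.
\]
Applying $\bm 1_n^*$, the term $\bm 1_n^*Z\bm w^{k-1}$ vanishes because $Z^*\bm 1_n=0$ by \cref{fact: Zdecom}. The term $\bm 1_n^*(\operatorname{Deg}(G)-2\operatorname{Adj}(G)^*)\bm y^k$ equals $\sum_i(d_i-2d_i^{\rm in})y_i^k=\bm\delta^*\bm y^k$. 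Since $\sum_i\delta_i=0$, this is $\bm\delta^*(\bm y^k-\bm 1_n\overline y^k)$. Cauchy--Schwarz and \cref{lem:equiv}\ref{lem:equiv_ybar} then give $\|\bm 1_n^*\bm r^k\|^2\le\tau^{-2}\|\bm\delta\|^2\|\bm y^k-\bm 1_n\overline y^k\|^2=o(1/k)$. Finally, $\bm 1_n^*\bm r^k=\sum_iA_ix_i^k-b$ by \cref{eq:decomposition-b}.

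For \ref{thm:convergence_x}, use \cref{thm:GMI}\ref{item:GMI_2}. The point $x_i^{k}$ satisfies $-A_i^*y_i^{k}\in F_i(x_i^{k})$; indeed, step 6 reads $-A_i^*(u_i^k-\tau b_i)/d_i-\tfrac{\tau}{d_i}A_i^*A_ix_i^k\in F_i(x_i^k)$, which equals $-A_i^*y_i^k$. The system \cref{primal} can be written as the inclusion $0\in \mathcal M(\bm x,y)$, where $\mathcal M(\bm x,y):=(F_1(x_1)+A_1^*y,\dots,F_n(x_n)+A_n^*y,\,b-\sum_iA_ix_i)$. This operator is the sum of a maximally monotone operator and a skew bounded linear one, hence it is maximally monotone and its graph is sequentially closed in the weak--strong topology. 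The main obstacle is that $y_i^k\rightharpoonup y^\star$ only weakly and the first $n$ blocks have no strong residual. So I would use the standard demiclosedness argument for sums of the form used in \cite{graph-drs} (e.g.\ \cite[Proposition~26.5]{bauschke}), exploiting that the multiplier differences $y_i^k-\overline y^k\to0$ strongly and that $\sum_iA_ix_i^k-b\to0$ strongly. Concretely, I would write $(-A_i^*y_i^k)\in F_i(x_i^k)$ as $(x_i^k,-A_i^*\overline y^k-A_i^*(y_i^k-\overline y^k))$. The perturbations vanish strongly, and the coupling $\sum_i\langle x_i^k,A_i^*\overline y^k\rangle=\langle \sum_iA_ix_i^k,\overline y^k\rangle\to\langle b,y^\star\rangle$ holds since strong times weak converges. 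This yields $\limsup$ inner-product conditions that are enough to pass to the limit along a weakly convergent subsequence via the monotonicity inequality with an arbitrary test point of $\gra F_i$ plus maximality.

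For \ref{thm:convergence_z}, note that $A_ix_i^k=r_i^k+b_i=(d_iy_i^k-u_i^k)/\tau+b_i$. The right-hand side converges weakly, because $\bm y^k$ and $\bm w^k$ do, so $A_ix_i^k\rightharpoonup\tfrac1\tau(d_iy^\star-2d_i^{\rm in}y^\star-(Z\bm w^\star)_i)+b_i=:v_i$. It remains to produce $\bm x^\star$ with $A_ix_i^\star=v_i$ solving \cref{primal}. For this I would take the limit point $\lambda=y^\star$ and $y=\tfrac1{d_i}(2d_i^{\rm in}y^\star+(Z\bm w^\star)_i)$ in the fixed-point characterization of \cite[Theorem~3.2]{graph-drs}, which gives $y^\star=J_{\frac{\tau}{d_i}D_i}(y)$. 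Then \cref{thm:GMI}\ref{item:GMI_2}--\ref{item:GMI_3} supplies $x_i^\star$ with $-A_i^*y^\star\in F_i(x_i^\star)$ and $A_ix_i^\star=v_i$. Summing, together with \ref{thm:convergence_r} and weak lower semicontinuity, gives $\sum_iA_ix_i^\star=b$.
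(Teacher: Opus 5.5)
Your arguments for (i)--(iii) follow the paper. There is one slip in (iii): $\bm 1_n^*\operatorname{Adj}(G)^*\bm y^k=\sum_i d_i^{\rm out}y_i^k$, not $\sum_i d_i^{\rm in}y_i^k$. The identity is therefore $\tau\bm 1_n^*\bm r^k=-\bm\delta^*\bm y^k$, as in \cref{eq:residual-identity}. The sign does not affect the $o(1/k)$ bound.

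For (iv) you take a different route, and it is valid. The paper evaluates the primal--dual block operator at $(\bm x^{k+1},\overline y^{k+1})$ rather than at the individual $y_i^{k+1}$. The resulting left-hand side $\bigl(\mathcal A^*(\bm 1_n\overline y^{k+1}-\bm y^{k+1}),\,-b+\tfrac1\tau\bm\delta^*\bm y^{k+1}\bigr)$ converges strongly, so weak--strong closedness of the graph of a maximally monotone operator finishes at once. The obstacle you describe therefore does not arise; your own $\mathcal M$, evaluated at $(\bm x^k,\overline y^k)$, would have sufficed.

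Your Minty-type alternative also works, but it must be run on the product operator $\mathcal F=F_1\times\cdots\times F_n$, with test points in $\gra\mathcal F$ rather than in each $\gra F_i$ separately. The reason is that the convergence $\langle\bm x^{k_j},-\mathcal A^*\bm y^{k_j}\rangle\to-\langle b,y^\star\rangle=\langle\bm x,-\mathcal A^*\bm 1_ny^\star\rangle$ holds only after summing over $i$. Block by block, $\langle A_ix_i^{k_j},y_i^{k_j}\rangle$ pairs two merely weakly convergent sequences.

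In (v), you compute the weak limit $v_i$ of $A_ix_i^k$ as the paper does, but the decisive step is outsourced. The identity $y^\star=J_{\frac{\tau}{d_i}D_i}\bigl(\tfrac1{d_i}(2d_i^{\rm in}y^\star+(Z\bm w^\star)_i)\bigr)$ says that the weak limit of the dual shadows is the resolvent image of the weak limit of the governing sequence. \Cref{lem:equiv}, which is all the paper extracts from \cite[Theorem~3.2]{graph-drs}, gives the two limits only separately. You cannot pass to the limit in \cref{eq:graph-splitting-y}, because resolvents are not weakly continuous. Unless you verify that the cited theorem explicitly contains this identification, it must be proved.

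The identity is equivalent to $p_i^\star\in D_i(y^\star)$ with $\bm p^\star:=-\bm r^\star$, and this is exactly what the paper establishes:
\begin{itemize}
\item $(\bm y^k,-\bm r^k)\in\gra(D_1\times\cdots\times D_n)$;
\item $\langle\bm y^k,-\bm r^k\rangle\to0=\langle\bm 1_ny^\star,\bm p^\star\rangle$, using $\bm y^k-\bm 1_n\overline y^k\to0$ and $\bm 1_n^*\bm r^k\to0$;
\item \cite[Corollary~20.59(ii)]{bauschke} then gives the inclusion.
\end{itemize}
The demiclosedness principle for sums that you mention in (iv) actually belongs here. Applied to $D_1,\ldots,D_n$ with $y_i^k-y_j^k\to0$ and $\sum_i p_i^k\to0$ strongly, it yields the same conclusion. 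After that, $x_i^\star$ comes directly from the definition \cref{eq:dual-operator} of $D_i$, with no need for \cref{thm:GMI}.
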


\begin{proof}
\ref{thm:convergence_w}:
By \cref{lem:equiv}\ref{lem:equiv_w}, $\bm w^k\rightharpoonup\bm w^\star$ for some $\bm w^\star\in\Gi^{n-1}$. Moreover, from the update of $\bm w^k$,
\[\bm w^{k+1}-\bm w^k = -\theta_{k+1}Z^*\bm y^{k+1}.\]
Hence, using \cref{lem:equiv}\ref{lem:equiv_Zy} and the boundedness of $(\theta_k)_{k\geq1}$, we get $\|\bm w^{k+1}-\bm w^k\|^2=o(k^{-1})$.
\medskip

\ref{thm:convergence_y}:
The result follows directly from \cref{lem:equiv}\ref{lem:equiv_y}.
\medskip

\ref{thm:convergence_r}:
Let $\bm\delta$ be the degree balance vector of $G$. Since $\bm1_n^*\bm\delta=0$, we have that $\bm\delta^*    \left(\bm y^k-\bm1_n\overline y^k\right)=\bm\delta^*\bm y^k$. Thus, \cref{lem:equiv}\ref{lem:equiv_ybar} yields
\begin{equation}
\label{eq:strong-convergence}
    \|\bm\delta^*\bm y^k\|^2=o(1/k).
\end{equation}
On the other hand, from the updates of $u_i^{k+1}$ and $y_i^{k+1}$ in \cref{alg:graphADMM},
\[
    d_i y_i^{k+1}
    =
    \sum_{j=1}^{n-1}Z_{i,j}w_j^k
    +
    2\sum_{(h,i)\in\mathcal E}y_h^{k+1}
    +
    \tau r_i^{k+1},
    \quad \forall i=1,\ldots,n.
\]
Summing these identities over $i$ and using $\bm1_n^*Z=0$ (since $\ker Z^*=\operatorname{span}\{\bm1_n\}$ by \cref{fact: Zdecom}), we obtain
\[
    \tau\bm1_n^*\bm r^{k+1}
    =
    \sum_{i=1}^n d_i y_i^{k+1}
    -
    2\sum_{i=1}^n
    \sum_{(h,i)\in\mathcal E}y_h^{k+1}.
\]
Observe that
\(
    \sum_{i=1}^n
    \sum_{(h,i)\in\mathcal E}y_h^{k+1}
    =
    \sum_{i=1}^n d_i^{\rm out}y_i^{k+1}
\), so it follows that
\begin{equation}
\label{eq:residual-identity}
    \bm1_n^*\bm r^{k+1}
    = \frac1\tau \sum_{i=1}^n(d_i-2d_i^{\rm out})y_i^{k+1}
    = \frac1\tau \sum_{i=1}^n(-\delta_i)y_i^{k+1}
    = -\frac1\tau\bm\delta^*\bm y^{k+1}.
\end{equation}
By \cref{eq:strong-convergence} and recalling $\sum_{i=1}^nb_i=b$, \ref{thm:convergence_r} follows.
\medskip

\ref{thm:convergence_x}:
Rewriting the $\bm x^{k+1}$ update step in \cref{alg:graphADMM}, and using $r_i^{k+1}$ and $y_i^{k+1}$, we obtain
\begin{equation}\label{eq:monotone-inclusion_system}
    0
    \in
    F_i(x_i^{k+1})
    +
    \frac1{d_i}A_i^*
    \left(u_i^{k+1}+\tau r_i^{k+1}\right)
    =
    F_i(x_i^{k+1})+A_i^*y_i^{k+1},
    \quad\forall i=1,\ldots,n.
\end{equation}
Define the product operators $\mathcal F:\bHi\rightrightarrows\bHi$ and $\mathcal A:\bHi\to\Gi^n$ given by
\begin{equation}
\label{eq:F-A-block-notation}
    \mathcal F(\bm x):=F_1(x_1)\times\cdots\times F_n(x_n)
    \quad\text{and}\quad
    \mathcal A\bm x:=(A_1x_1,\ldots,A_nx_n),
\end{equation}
for any $\bm x=(x_1,\ldots,x_n)\in\bHi$. With this notation, the system of inclusions \cref{eq:monotone-inclusion_system} reads as 
\begin{equation}
\label{eq:monotone-inclusion}
    \bm0
    \in
    \mathcal F(\bm x^{k+1})+\mathcal A^*\bm y^{k+1}.
\end{equation}
Since $\bm r^{k+1}=\mathcal A\bm x^{k+1}-\bm b$ and $\bm1_n^*\bm b=b$, \cref{eq:residual-identity} can be written as $\bm1_n^*\mathcal A\bm x^{k+1}+\frac1\tau\bm\delta^*\bm y^{k+1}=b$. Using this, together with \cref{eq:monotone-inclusion}, we obtain
\begin{equation*}
    \begin{bmatrix}
        \bm0 \\ b
    \end{bmatrix}
    \in
    \begin{bmatrix}
        \mathcal F & \mathcal A^* \\
        \bm1_n^*\mathcal A & \frac1\tau\bm\delta^*
    \end{bmatrix}
    \begin{bmatrix}
        \bm x^{k+1} \\ \bm y^{k+1}
    \end{bmatrix}.
\end{equation*}
Recalling $\overline y^k=\frac1n\bm1_n^*\bm y^k$, we can rewrite the previous inclusion as
\begin{equation}\label{eq:block-inclusion}
    \begin{bmatrix}
        \mathcal A^*
        \left(\bm1_n\overline y^{k+1}-\bm y^{k+1}\right) \\
        -b+\frac1\tau\bm\delta^*\bm y^{k+1}
    \end{bmatrix}
    \in
    \begin{bmatrix}
        \mathcal F & \mathcal A^*\bm1_n \\
        -\bm1_n^*\mathcal A & 0
    \end{bmatrix}
    \begin{bmatrix}
        \bm x^{k+1} \\ \overline y^{k+1}
    \end{bmatrix}.
\end{equation}
The previous block operator is maximally monotone as the sum of a maximally monotone operator and a skew-symmetric linear operator (see, e.g., \cite[Proposition~20.23, Example~20.35 and Corollary~25.5(i)]{bauschke}). Hence, its graph is sequentially closed in $(\bHi\times\Gi)^{\text{weak}}\times
(\bHi\times\Gi)^{\text{strong}}$ (see, e.g., \cite[Proposition~20.38(ii)]{bauschke}). By \cref{lem:equiv}\ref{lem:equiv_ybar} and \cref{eq:strong-convergence}, the left-hand side of \cref{eq:block-inclusion} converges strongly to $(\bm 0,-b)$. Moreover, by \ref{thm:convergence_y}, $\overline y^k\wto y^\star$. Let $\bm x$ be any weak sequential cluster point of $(\bm x^k)_{k\geq1}$. Then there exists a subsequence such that $(\bm x^{k_j},\overline y^{k_j}) \wto(\bm x,y^\star)$, so by demiclosedness of the operator,
\[
    \begin{bmatrix}
        \bm0 \\ -b
    \end{bmatrix}
    \in
    \begin{bmatrix}
        \mathcal F & \mathcal A^*\bm1_n \\
        -\bm1_n^*\mathcal A & 0
    \end{bmatrix}
    \begin{bmatrix}
        \bm x \\ y^\star
    \end{bmatrix}
    =
    \begin{bmatrix}
        \mathcal F(\bm x)+\mathcal A^*(\bm1_ny^\star) \\
        -\bm1_n^*\mathcal A\bm x
    \end{bmatrix}.
\]
The first block proves $0\in F_i(x_i)+A_i^*y^\star$ for $i=1,\ldots,n$, whereas the second gives $\sum_{i=1}^nA_ix_i=b$. Altogether, we get that $(\bm x,y^\star)$ solves \cref{primal}.
\medskip

\ref{thm:convergence_z}:
From the updates of $u_i^k$ and $y_i^k$ in \cref{alg:graphADMM},
\[
    r_i^k=\frac1\tau\left(d_iy_i^k-2\sum_{(h,i)\in\mathcal E}y_h^k
    -\sum_{j=1}^{n-1}Z_{i,j}w_j^{k-1}\right),\quad\forall i=1,\ldots,n.
\]
Since $\bm y^k\wto\bm1_ny^\star$ and $\bm w^k\wto\bm w^\star$ by \ref{thm:convergence_y} and \ref{thm:convergence_w}, we get $\bm r^k\wto\bm r^\star$, with $\bm{r}^\star:=\frac1\tau(\bm{\delta}y^\star-Z\bm{w}^\star)$.

Set $\bm p^k:=-\bm r^k$ and $\bm p^\star:=-\bm r^\star$. By \cref{eq:dual-operator} and \cref{eq:monotone-inclusion_system}, $(\bm y^k,\bm p^k)\in\gra\mathcal D$, where $\mathcal D:=D_1\times\cdots\times D_n$ is maximally monotone on $\Gi^n$. By \ref{thm:convergence_r}, $\sum_{i=1}^np_i^k=-\bm1_n^*\bm r^k\to0$, whence $\sum_{i=1}^np_i^\star=0$ and, in particular, $\langle\bm1_ny^\star,\bm p^\star\rangle =\langle y^\star,\sum_{i=1}^np_i^\star\rangle=0$. Moreover,
\begin{equation}\label{mu->0}
    \langle\bm y^k,\bm p^k\rangle
    =\langle \bm y^k - \bm{1}_n\overline{y}^k , \bm p^k \rangle + \langle \bm{1}_n\overline{y}^k , \bm p^k \rangle = \langle \bm y^k - \bm{1}_n\overline{y}^k , \bm p^k \rangle + \langle \overline{y}^k , \bm{1}_n^*\bm p^k \rangle \to 0,
\end{equation}
since $\bm y^k - \bm{1}_n\overline{y}^k\to0$ by \cref{lem:equiv}\ref{lem:equiv_ybar}, $(\bm p^k)_{k\geq1}$ is bounded (being weakly convergent), and $(\overline y^k)_{k\geq1}$ is bounded. Therefore, \cref{mu->0} implies that $\langle\bm y^k,\bm p^k\rangle\to\langle\bm1_ny^\star,\bm p^\star\rangle$ and \cite[Corollary~20.59(ii)]{bauschke} yields $\bm p^\star\in\mathcal D(\bm1_ny^\star)$, i.e., $p_i^\star\in D_i(y^\star)$ for every $i=1,\ldots,n$.

Hence, by \cref{eq:dual-operator}, for each $i$ there exists $x_i^\star\in\Hi_i$ with $-A_i^*y^\star\in F_i(x_i^\star)$ and $p_i^\star=-A_ix_i^\star+b_i$. Consequently $A_ix_i^k=b_i-p_i^k\wto b_i-p_i^\star=A_ix_i^\star$, and $\sum_{i=1}^np_i^\star=0$ gives $\sum_{i=1}^nA_ix_i^\star=b$. Altogether, $(\bm x^\star,y^\star)$ solves \cref{primal}.
\end{proof}

\begin{remark}[Strong convergence of the dual sequence]
\label{rem:strong-convergence-y}
Suppose in addition that, for some $i\in\{1,\ldots,n\}$, the dual operator $D_i$ is uniformly monotone on every bounded subset of $\dom D_i$. Then, by \cite[Proposition~3.10]{graph-drs},
\[\bm y^k\to\bm1_n y^\star.\]
That is, the convergence in \cref{lem:equiv}\ref{lem:equiv_y} and, consequently, that of \cref{thm:convergence}\ref{thm:convergence_y}, is strong. Moreover, if the corresponding modulus of uniform monotonicity can be chosen of the form $\phi(t)=\mu t^2$ for some $\mu>0$ (in particular, if $D_i$ is strongly monotone), then by \cite[Remark~3.11]{graph-drs}
\[\|y_i^k-y^\star\|^2=o(k^{-1/2}).\]

A sufficient condition for the strong monotonicity of $D_i$ can be given directly in terms of the primal data; namely, when $F_i$ is $\beta$-cocoercive for some $\beta>0$ and $A_i$ is surjective. Indeed, the latter is equivalent to the existence of $c>0$ such that
\begin{equation}
\label{eq:Ajstar-bounded-below}
    \|A_i^*y\|\geq c\|y\|,
    \quad \forall y\in\Gi;
\end{equation}
see, e.g., \cite[Fact~2.26]{bauschke}. Let $(y,v),(\widehat y,\widehat v)\in\gra D_i$. By \cref{eq:dual-operator}, there exist $z,\widehat z\in\Hi_i$ such that \(-A_i^*y\in F_i(z)\) and \(-A_i^*\widehat y\in F_i(\widehat z)\), with \(v=-A_iz+b_i\) and \(\widehat v=-A_i\widehat z+b_i\). Therefore, by the $\beta$-cocoercivity of $F_i$ and \cref{eq:Ajstar-bounded-below},
\begin{align*}
    \langle y-\widehat y,v-\widehat v\rangle
    &=
    \langle
        z-\widehat z,
        -A_i^*(y-\widehat y)
    \rangle
    \geq
    \beta\|A_i^*(y-\widehat y)\|^2
    \geq
    \beta c^2\|y-\widehat y\|^2.
\end{align*}
Thus, $D_i$ is $\beta c^2$-strongly monotone, and the strong convergence above applies.
\end{remark}

Note that \cref{thm:convergence}\ref{thm:convergence_z} only guarantees that $A_ix_i^k\wto A_ix_i^\star$ for every $i=1,\ldots,n$, which does not yield convergence of $x_i^k$ without further assumptions. This limitation is standard in the ADMM literature, even for the classical two-block ADMM (see, e.g., \cite[Corollary~5.2]{bartz2022adaptive}). We present next some sufficient conditions under which the primal sequence does itself converge.

\begin{theorem}[Convergence of the primal sequence] \label{thm:x-convergence}
Consider the setting of \cref{thm:convergence}, and let $\bm x^\star=(x_1^\star,\ldots,x_n^\star)$ be as in \cref{thm:convergence}\ref{thm:convergence_z}. For each fixed $i\in\{1,\ldots,n\}$ the following hold:
\begin{enumerate}[label={\rm(\roman*)}]
    \item\label{thm:x-conv-a}
    if $A_i$ is injective with closed range, then \(x_i^k\wto x_i^\star\);

    \item\label{thm:x-conv-b}
    if $F_i$ is uniformly monotone at $x_i^\star$ (relative to $-A_i^*y^\star$), then \(x_i^k\to x_i^\star\).
\end{enumerate}
\end{theorem}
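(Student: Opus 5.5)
For part \ref{thm:x-conv-a}, the plan is to invert $A_i$ on its range. If $A_i$ is injective with closed range, then $A_i$ is bounded below: there is $c>0$ with $\|A_ix\|\geq c\|x\|$ for all $x\in\Hi_i$. Equivalently, $A_i^*A_i$ is invertible, and $x=(A_i^*A_i)^{-1}A_i^*A_ix$ for every $x\in\Hi_i$. The operator $(A_i^*A_i)^{-1}A_i^*$ is bounded and linear, hence weakly continuous. By \cref{thm:convergence}\ref{thm:convergence_z}, $A_ix_i^k\wto A_ix_i^\star$. Applying $(A_i^*A_i)^{-1}A_i^*$ to both sides then gives $x_i^k\wto x_i^\star$. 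This part should be routine.

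For part \ref{thm:x-conv-b}, we use the inclusion \cref{eq:monotone-inclusion_system}, namely $-A_i^*y_i^k\in F_i(x_i^k)$, together with $-A_i^*y^\star\in F_i(x_i^\star)$. Uniform monotonicity at $x_i^\star$ relative to $-A_i^*y^\star$ gives
\[
\phi(\|x_i^k-x_i^\star\|)\leq\langle x_i^k-x_i^\star,-A_i^*(y_i^k-y^\star)\rangle=-\langle A_ix_i^k-A_ix_i^\star,\,y_i^k-y^\star\rangle .
\]
It therefore suffices to show that the right-hand side tends to $0$. Since $\phi$ is increasing and vanishes only at $0$, this forces $\|x_i^k-x_i^\star\|\to0$.

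Both factors on the right converge only weakly, so a single block cannot be handled in isolation. The plan is to pass to all blocks at once, arguing as in the proof of \cref{thm:convergence}\ref{thm:convergence_z}, using monotonicity of every $F_j$.
\begin{enumerate}[label=(\alph*)]
\item For each $j$, monotonicity of $F_j$ gives $\langle A_jx_j^k-A_jx_j^\star,\,y_j^k-y^\star\rangle\leq0$. Hence the sum $S_k:=\sum_{j}\langle A_jx_j^k-A_jx_j^\star,y_j^k-y^\star\rangle$ is a sum of nonpositive terms, and the $i$-th term is bounded in absolute value by $|S_k|$.
\item Write $y_j^k-y^\star=(y_j^k-\overline y^k)+(\overline y^k-y^\star)$. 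The first piece tends to $0$ strongly by \cref{lem:equiv}\ref{lem:equiv_ybar}, and $A_jx_j^k$ is bounded because it converges weakly. Hence its contribution to $S_k$ vanishes.
\item The second piece contributes $\langle\sum_jA_jx_j^k-b,\,\overline y^k-y^\star\rangle$, since $\sum_jA_jx_j^\star=b$. Here $\|\sum_jA_jx_j^k-b\|\to0$ by \cref{thm:convergence}\ref{thm:convergence_r}, and $\overline y^k$ is bounded, so this contribution also tends to $0$.
\end{enumerate}
Thus $S_k\to0$, each of its nonpositive terms tends to $0$, and in particular the $i$-th term does.

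The main obstacle is the step in (a): we need monotonicity inequalities for all $j$, not only the uniform one for index $i$. This is fine, because every $F_j$ is maximally monotone and $(x_j^\star,-A_j^*y^\star)\in\gra F_j$ by \cref{thm:convergence}\ref{thm:convergence_z}. With that in hand, the decomposition in (b)–(c) closes the argument.
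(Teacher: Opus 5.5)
Your proposal is correct and follows the paper's proof. In part (i), $(A_i^*A_i)^{-1}A_i^*$ is exactly the Moore--Penrose pseudoinverse $A_i^\dagger$ that the paper applies to $A_ix_i^k\wto A_ix_i^\star$. In part (ii), you use the same device as the paper: the nonnegative monotonicity gaps over all blocks, whose sum vanishes by consensus (\cref{lem:equiv}\ref{lem:equiv_ybar}), feasibility (\cref{thm:convergence}\ref{thm:convergence_r}) and boundedness, followed by the modulus $\phi$; the only difference is bookkeeping, since you split $y_j^k-y^\star$ through $\overline y^k$ directly while the paper regroups the sum as $\langle\bm r^\star,\bm y^k-\bm1_ny^\star\rangle-\langle\bm r^k,\bm y^k\rangle+\langle\bm1_n^*\bm r^k,y^\star\rangle$ and invokes \cref{mu->0}.
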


\begin{proof}
\ref{thm:x-conv-a}: Since $A_i$ has closed range, we consider the Moore--Penrose pseudoinverse $A_i^\dagger:\Gi\to\Hi_i$, which is linear and bounded. Since $A_i$ is also injective, $A_i^\dagger A_i=\Id_{\Hi_i}$ (see \cite[Remark 2.7]{graph-linear}). Thus, applying the pseudoinverse in \cref{thm:convergence}\ref{thm:convergence_z} gives
\[x_i^k=A_i^\dagger A_ix_i^k \wto A_i^\dagger A_ix_i^\star=x_i^\star.\]

\ref{thm:x-conv-b}:
Since $(\bm x^\star,y^\star)$ solves \cref{primal}, then $-A_j^*y^\star\in F_j(x_j^\star)$ for every $j=1,\ldots,n$ and $\sum_{j=1}^nA_jx_j^\star=b$. We know from  \cref{eq:monotone-inclusion_system} that $-A_j^*y_j^k\in F_j(x_j^k)$, for all $j=1,\ldots,n$ and all $k\geq1$. By applying the monotonicity of $F_j$ to these two pairs, we obtain
\begin{equation*}
    0\leq\eta_j^k
    :=
    \left\langle x_j^k-x_j^\star,\,-A_j^*(y_j^k-y^\star)\right\rangle
    =
    \left\langle r_j^\star-r_j^k,\,y_j^k-y^\star\right\rangle,
    \quad \forall j=1,\ldots,n,\ \forall k\geq1.
\end{equation*}
Also, recalling that $\sum_{j=1}^nr_j^\star=0$, we note that $\sum_{j=1}^n\eta_j^k = \langle\bm r^\star,\bm y^k-\bm 1_n y^\star\rangle -\langle\bm r^k,\bm y^k\rangle + \langle \bm 1_n^*\bm r^k,y^\star\rangle$. By \cref{mu->0}, and \cref{thm:convergence}\ref{thm:convergence_y} and \ref{thm:convergence_r}, we get $\sum_{j=1}^n\eta_j^k\to0$. As every $\eta_j^k$ is nonnegative, we conclude that $\bm \eta^k\to0$.
 
Finally, let $\phi_i$ be a modulus of uniform monotonicity of $F_i$ at $x_i^\star$ relative to $-A_i^*y^\star$. Applying it to $(x_i^k,-A_i^*y_i^k)\in\gra F_i$ gives $\phi_i\bigl(\|x_i^k-x_i^\star\|\bigr)\leq\eta_i^k$. Given $\varepsilon>0$, $\phi_i(\varepsilon)>0$ and, by the convergence of $\bm \eta^k$, the inequality $\eta_i^k<\phi_i(\varepsilon)$ eventually holds. Since $\phi_i$ is increasing, $\|x_i^k-x_i^\star\|<\varepsilon$. As this is true for all $\varepsilon>0$,  $x_i^k\to x_i^\star$.
\end{proof}

\subsection{The case of convex minimization}
\label{sec:convex-minimization}

Throughout this section, we specialize to the case where, for each $i=1,\ldots,n$, $F_i$ is the subdifferential of a proper, convex, and lower semicontinuous function $f_i:\Hi_i\to\,]-\infty,+\infty]$. Although the results of this section admit natural (but technical) extensions to general Hilbert spaces, for simplicity of the resulting conditions, we restrict to the finite-dimensional setting $\Gi=\R^m$ and $\Hi_i=\R^{\ell_i}$, for $i=1,\ldots,n$. Thus, we focus on the problem
\begin{equation}\tag{P\textsubscript{opt}}\label{primal_opt}
 	\min\; \sum_{i=1}^nf_i(x_i)\quad\text{s.t. }\sum_{i=1}^nA_ix_i=b,
\end{equation}
where $b\in\R^m$, $f_i\in\Gamma_0(\R^{\ell_i})$ and $A_i\in\R^{m\times \ell_i}$, for each $i=1,\ldots,n$.

We begin by relating, under some constraint qualification, the system \cref{primal}  to the first-order optimality conditions of \cref{primal_opt}.

\begin{proposition}[Primal optimality] \label{prop:primal-optimality}
Let $f_i\in\Gamma_0(\R^{\ell_i})$, for $i=1,\ldots,n$, and consider $F_i=\partial f_i$ in \cref{primal}. The following hold:
\begin{enumerate}[label={\rm(\roman*)}]
    \item If $(\bm x,y)$ solves \cref{primal}, then $\bm x$ solves \cref{primal_opt}.\label{primal-opt-a} 
    \item Suppose that there exist $\widehat x_1\in\R^{\ell_1},\ldots,\widehat x_n\in\R^{\ell_n}$ such that \label{primal-opt-b}
    \begin{equation}
    \sum_{i=1}^nA_i\widehat x_i=b\quad\text{and}\quad\widehat x_i\in
    \ri (\dom f_i),\quad \forall i=1,\ldots,n.
    \end{equation}
    If $\bm x$ solves \cref{primal_opt}, then there exists $y\in\R^m$ such that $(\bm x,y)$ solves \cref{primal}.
\end{enumerate}
\end{proposition}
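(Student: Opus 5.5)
For \ref{primal-opt-a}, I will argue directly from the subgradient inequality. Suppose $(\bm x,y)$ solves \cref{primal} with $F_i=\partial f_i$. Then $-A_i^*y\in\partial f_i(x_i)$ for every $i$. Take any feasible $\widetilde{\bm x}$, that is, one with $\sum_i A_i\widetilde x_i=b$. The subgradient inequality gives $f_i(\widetilde x_i)\ge f_i(x_i)+\langle -A_i^*y,\widetilde x_i-x_i\rangle$. Summing over $i$ yields
\[
\sum_{i=1}^n f_i(\widetilde x_i)\ge\sum_{i=1}^n f_i(x_i)-\Bigl\langle y,\sum_{i=1}^nA_i\widetilde x_i-\sum_{i=1}^nA_ix_i\Bigr\rangle=\sum_{i=1}^n f_i(x_i),
\]
where the last equality uses that both $\widetilde{\bm x}$ and $\bm x$ are feasible. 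Feasibility of $\bm x$ itself is the last line of \cref{primal}. Finally, $x_i\in\dom\partial f_i\subseteq\dom f_i$, so the objective at $\bm x$ is finite, and hence $\bm x$ is a minimizer.

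For \ref{primal-opt-b}, I will rewrite \cref{primal_opt} as the unconstrained minimization of $\phi(\bm x):=f(\bm x)+\iota_C(\mathcal A'\bm x)$. Here $f(\bm x):=\sum_i f_i(x_i)$, the map $\mathcal A':\R^{\ell_1}\times\cdots\times\R^{\ell_n}\to\R^m$ is $\bm x\mapsto\sum_iA_ix_i$, and $C:=\{b\}$. Fermat's rule gives $0\in\partial\phi(\bm x)$. The goal is then to split this subdifferential as $\partial f(\bm x)+\mathcal A'^*N_C(\mathcal A'\bm x)$.

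This splitting is the main step, and it is where the hypothesis is needed. In finite dimensions, the chain/sum rule (e.g., \cite[Corollary~16.53 or Theorem~16.47]{bauschke}, or Rockafellar's Theorem 23.9) applies provided $\ri(\mathcal A'(\dom f))\cap\ri C\neq\emptyset$. Equivalently, it applies when some point of $\ri\dom f$ is mapped by $\mathcal A'$ into $C$. I will use two facts here: $\ri(\dom f)=\prod_i\ri(\dom f_i)$ for a product of convex sets, and $\ri\mathcal A'(S)=\mathcal A'(\ri S)$ for linear maps. With these, the hypothesis that some $\widehat{\bm x}$ satisfies $\widehat x_i\in\ri(\dom f_i)$ and $\sum_iA_i\widehat x_i=b$ is exactly this qualification condition, since $\ri\{b\}=\{b\}$. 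Because $C$ is polyhedral (a single point, i.e., $\iota_C$ is polyhedral), the weaker version of the condition, without the relative interior on $C$, also suffices.

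The qualification condition yields
\[
0\in\partial f(\bm x)+\mathcal A'^*N_{\{b\}}(b)=\partial f_1(x_1)\times\cdots\times\partial f_n(x_n)+\mathcal A'^*\R^m.
\]
Here I have used $\partial f(\bm x)=\prod_i\partial f_i(x_i)$ for separable sums, and $N_{\{b\}}(b)=\R^m$. Since $\mathcal A'^*y=(A_1^*y,\ldots,A_n^*y)$, there exists $y\in\R^m$ with $0\in\partial f_i(x_i)+A_i^*y$ for all $i$. Together with the feasibility $\sum_iA_ix_i=b$, this says that $(\bm x,y)$ solves \cref{primal}.
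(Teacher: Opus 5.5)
Your proof is correct and follows essentially the same route as the paper: part (i) is the always-valid inclusion $\partial h+\partial\iota_C\subseteq\partial(h+\iota_C)$ unpacked through the subgradient inequality, and part (ii) is Fermat's rule plus a subdifferential rule under a relative-interior qualification, verified via $\ri(\dom h)=\prod_i\ri(\dom f_i)$. The only cosmetic difference is that you write the constraint as $\iota_{\{b\}}\circ\mathcal A'$ and use the chain rule with $N_{\{b\}}(b)=\R^m$, whereas the paper takes $C=S^{-1}(b)$ and uses the sum rule together with $N_C(\bm x)=\ran S^*$.
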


\begin{proof}
Define $h:\R^{\ell_1}\times\cdots\times\R^{\ell_n}\to{]-\infty,+\infty]}$ and $S:\R^{\ell_1}\times\cdots\times\R^{\ell_n}\to\R^m$  as
\begin{equation}\label{eq:h-S}
h(\bm z):=\sum_{i=1}^nf_i(z_i),\quad\text{and}\quad S(\bm z):=\sum_{i=1}^nA_iz_i,
\end{equation}
and set $C:=S^{-1}(b)$, so that \cref{primal_opt} reads as minimizing $h+\iota_C$. By Fermat's rule,
\begin{equation}\label{eq:Popt-as-Fermat}
    \bm x\text{ solves }\cref{primal_opt}\quad\Leftrightarrow\quad 0\in\partial(h+\iota_C)(\bm x).
\end{equation}
Since $C$ is an affine subspace with $C-C=\ker S$, we have $\partial\iota_C(\bm x)=N_C(\bm x)=(\ker S)^\perp=\ran S^*$ for every $\bm x\in C$ (see, e.g.,~\cite[Example~6.43 and Fact~2.25(iv)]{bauschke}). We deduce that
\begin{equation}\label{eq:P-as-Fermat}
    \exists y\in\R^m\text{ such that }(\bm x,y)\text{ solves \cref{primal}}
    \quad\Leftrightarrow\quad
    0\in\partial h(\bm x)+\partial\iota_C(\bm x).
\end{equation}

\labelcref{primal-opt-a}:
By \cite[Theorem~23.8]{rockafellar1970convex}, $\partial h+\partial\iota_C\subseteq\partial(h+\iota_C)$, so \cref{eq:P-as-Fermat} implies \cref{eq:Popt-as-Fermat}.
\medskip

\labelcref{primal-opt-b}:
Note that $\ri (\dom h)=\prod_{i=1}^n\ri (\dom f_i)$ and $\ri C=C$. Thus, we get $\widehat{\bm x}\in\ri (\dom h)\cap\ri C$. This condition provides (see, e.g., \cite[Theorem~23.8]{rockafellar1970convex}) the sum rule \(\partial(h+\iota_C)=\partial h+\partial{\iota_C}\) and \cref{eq:Popt-as-Fermat} implies \cref{eq:P-as-Fermat}.
\end{proof}

Having related \cref{primal} to \cref{primal_opt}, we now turn to the dual side. The Fenchel--Rockafellar dual of \cref{primal_opt} takes the form
\begin{equation}
\tag{D\textsubscript{opt}}
\label{dual_opt}
    \min \; \sum_{i=1}^n f_i^*(-A_i^*y) + \langle b,y\rangle.
\end{equation}
The following proposition shows that, under a suitable constraint qualification, \cref{dual_opt} is, analogously, an instance of the dual inclusion \cref{dual}.

\begin{proposition}[Dual optimality]
\label{prop:convex-dual}
Let $f_i\in\Gamma_0(\R^{\ell_i})$ and recall the dual operators $D_i$ defined in \cref{eq:dual-operator}, with $F_i=\partial f_i$, for $i=1,\ldots,n$. The following hold:
\begin{enumerate}[label={\rm(\roman*)}]
    \item If $y\in\R^m$ solves \cref{dual}, then $y$ solves \cref{dual_opt}.\label{convex-dual-a}
    \item Suppose that there exists $\widehat y\in\R^m$ such that\label{convex-dual-b}
    \begin{equation}
    \label{eq:CQ-dual-finite}
        -A_i^*\widehat y \in  \ri (\dom f_i^*), \quad \forall i=1,\ldots,n.
    \end{equation}
    If $y$ solves \cref{dual_opt}, then $y$ solves \cref{dual}.
\end{enumerate}
\end{proposition}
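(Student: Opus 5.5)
The plan mirrors the proof of \cref{prop:primal-optimality}. Define $g_i:=f_i^*\circ(-A_i^*)+\langle b_i,\cdot\rangle$ for $i=1,\ldots,n$, and set $g:=\sum_{i=1}^n g_i$. Since $\sum_i b_i=b$, the objective of \cref{dual_opt} is exactly $g$, which is convex and lower semicontinuous. By Fermat's rule, $y$ solves \cref{dual_opt} if and only if $0\in\partial g(y)$. The whole argument therefore comes down to comparing $\partial g$ with $\sum_i D_i$, using the two standard inclusions: the chain rule and the sum rule, each of which holds unconditionally in one direction and becomes an equality under a relative-interior qualification.

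\ref{convex-dual-a}: Let $y$ solve \cref{dual}, so $0=\sum_i p_i$ with $p_i\in D_i(y)$. By \cref{eq:dual-operator} and $(\partial f_i)^{-1}=\partial f_i^*$, each $p_i$ has the form $p_i=-A_iz_i+b_i$ with $z_i\in\partial f_i^*(-A_i^*y)$.

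The elementary chain-rule inclusion gives $-A_i\partial f_i^*(-A_i^*y)\subseteq\partial(f_i^*\circ(-A_i^*))(y)$. This follows directly from the subgradient inequality, since $f_i^*(-A_i^*\tilde y)\ge f_i^*(-A_i^*y)+\langle z_i,-A_i^*(\tilde y-y)\rangle=f_i^*(-A_i^*y)+\langle -A_iz_i,\tilde y-y\rangle$. Hence $p_i\in\partial g_i(y)$.

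The elementary sum inclusion $\sum_i\partial g_i\subseteq\partial g$ then yields $0\in\partial g(y)$. One should also note that $y\in\dom g$, because $y\in\dom D_i$ forces $-A_i^*y\in\dom\partial f_i^*\subseteq\dom f_i^*$. Thus $y$ solves \cref{dual_opt}.

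\ref{convex-dual-b}: Under \cref{eq:CQ-dual-finite}, I would first use, for each $i$, the finite-dimensional chain rule (\cite[Theorem~23.9]{rockafellar1970convex}). The condition $\ran(-A_i^*)\cap\ri(\dom f_i^*)\ni-A_i^*\widehat y$ gives $\partial(f_i^*\circ(-A_i^*))=-A_i\circ\partial f_i^*\circ(-A_i^*)$, hence $\partial g_i=D_i$, consistent with \cref{rem:dual-operator-convex}.

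Next I need the sum rule $\partial g=\sum_i\partial g_i$. By \cite[Theorem~23.8]{rockafellar1970convex}, it suffices that $\bigcap_i\ri(\dom g_i)\neq\emptyset$. I expect this to be the main (mild) obstacle: one must identify $\dom g_i=(-A_i^*)^{-1}(\dom f_i^*)$ and show that $\widehat y$ lies in its relative interior. Here I would invoke \cite[Theorem~6.7]{rockafellar1970convex}, which states that for a linear map $L$ with $\ran L\cap\ri C\neq\emptyset$ one has $\ri(L^{-1}C)=L^{-1}(\ri C)$. Applying it with $L=-A_i^*$ and $C=\dom f_i^*$ gives $\widehat y\in\ri(\dom g_i)$ for every $i$, so the sum rule holds.

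Finally, if $y$ solves \cref{dual_opt}, then $0\in\partial g(y)=\sum_i\partial g_i(y)=\sum_iD_i(y)$, so $y$ solves \cref{dual}.
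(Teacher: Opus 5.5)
Your proposal is correct and follows essentially the same route as the paper. Both arguments use Fermat's rule for $g$, and for (i) the unconditional chain-rule and sum-rule inclusions. For (ii), both apply Rockafellar's Theorem~23.9 to each $g_i$ and Theorem~6.7 to get $\widehat y\in\bigcap_i\ri(\dom g_i)$, so that Theorem~23.8 yields equality.

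The differences are cosmetic: you absorb $\langle b_i,\cdot\rangle$ into each $g_i$ rather than keeping a single $\langle b,\cdot\rangle$ term, and you verify the elementary chain-rule inclusion directly from the subgradient inequality instead of citing it.
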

\begin{proof}
For each $i=1,\ldots,n$, write $g_i:=f_i^*\circ(-A_i^*)$ and set $g:=\sum_{i=1}^ng_i+\langle b,\cdot\rangle$, so that \cref{dual_opt} amounts to minimizing $g$. By Fermat's rule, $y$ solves \cref{dual_opt} if and only if $0\in\partial g(y)$. Since $(\partial f_i)^{-1}=\partial f_i^*$ and $\sum_{i=1}^nb_i=b$, recalling \cref{eq:dual-operator} we get that $y$ solves \cref{dual} if and only if $0\in \sum_{i=1}^n \bigl((-A_i)\circ\partial f_i^*\bigr)(-A_i^*y)+b$.
\medskip

\labelcref{convex-dual-a}:
By \cite[Theorems~23.8 and~23.9]{rockafellar1970convex}, $(-A_i)\circ\partial f_i^*\circ(-A_i^*)\subseteq\partial g_i$ for all $i=1,\ldots,n$ and $\sum_{i=1}^n\partial g_i+b\subseteq\partial g$. Thus, $\sum_{i=1}^n \bigl((-A_i)\circ\partial f_i^*\circ(-A_i^*)\bigr)+b\subseteq\partial g$ and \cref{dual} implies \cref{dual_opt}.
\medskip

\labelcref{convex-dual-b}:
Condition \cref{eq:CQ-dual-finite} implies that $\ran (-A_i^*)\cap\ri (\dom f_i^*)\neq\emptyset$ for all $i=1,\ldots,n$. Hence, the subdifferential chain rule \cite[Theorem~23.9]{rockafellar1970convex} gives
\begin{equation}
\label{eq:dual-individual-chain-rule}
    \partial g_i  =  -A_i\circ\partial f_i^*\circ(-A_i^*).
\end{equation}
Moreover, \(\dom g_i = (-A_i^*)^{-1}(\dom f_i^*)\), where the inverse denotes the preimage of a set. Since \cref{eq:CQ-dual-finite} implies that \((-A_i^*)^{-1} \bigl(\ri (\dom f_i^*)\bigr)\neq\varnothing\), \cite[Theorem~6.7]{rockafellar1970convex} yields
\[\ri (\dom g_i) = (-A_i^*)^{-1} \bigl(\ri (\dom f_i^*)\bigr),\quad \forall i=1,\ldots,n.\]
In particular, we obtain that \(\widehat y \in \bigcap_{i=1}^n \ri (\dom g_i)\neq\emptyset\). The subdifferential sum rule \cite[Theorem~23.8]{rockafellar1970convex}, together with \cref{eq:dual-individual-chain-rule}, therefore gives $\sum_{i=1}^n \bigl((-A_i)\circ\partial f_i^*\circ(-A_i^*)\bigr)+b=\partial g$ and \cref{dual_opt} implies \cref{dual}.
\end{proof}

It is often useful to characterize the solutions of \cref{primal_opt} and \cref{dual_opt} as saddle points of a suitable Lagrangian function.

\begin{definition}[Augmented Lagrangian] \label{def:lagrangian}
Given $\gamma\geq0$, the \emph{augmented Lagrangian} associated with \cref{primal_opt} is the function $L_\gamma:\bigl(\R^{\ell_1}\times\cdots\times\R^{\ell_n}\bigr)\times\R^m\to\,]-\infty,+\infty]$,
\begin{equation*}
    L_\gamma(\bm x,y):=\sum_{i=1}^nf_i(x_i)
    +
    \left\langle y,\sum_{i=1}^nA_ix_i-b\right\rangle
    +
    \frac\gamma2\left\|\sum_{i=1}^nA_ix_i-b\right\|^2.
\end{equation*}
\end{definition}

\begin{definition}[Saddle point] \label{def:saddle-point}
Given $\gamma\geq0$, a pair $(\bm x,y)\in\bigl(\R^{\ell_1}\times\cdots\times\R^{\ell_n}\bigr)\times\R^m$ is a \emph{saddle point} of $L_\gamma$ if
\begin{equation}\label{eq:saddle-point}
    L_\gamma(\bm x,\eta)\leq L_\gamma(\bm x,y)\leq L_\gamma(\bm\xi,y),
    \quad
    \forall\bm\xi\in\R^{\ell_1}\times\cdots\times\R^{\ell_n},
    \ \forall\eta\in\R^m.
\end{equation}
\end{definition}

\begin{proposition}[Characterization of saddle points] \label{prop:saddle-points}
Fix $\gamma\geq0$ and let $L_\gamma$ be the augmented Lagrangian associated with \cref{primal_opt}. Then $(\bm x,y)$ is a saddle point of $L_\gamma$ if and only if $(\bm x,y)$ solves \cref{primal} (with $F_i=\partial f_i$, for $i=1,\ldots,n$). In that case, $\bm x$ solves \cref{primal_opt} and $y$ solves \cref{dual_opt}.
\end{proposition}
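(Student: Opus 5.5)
We will prove the equivalence by splitting the saddle-point condition \cref{eq:saddle-point} into its two inequalities and treating each as an optimality condition. Then we will derive the final claim from \cref{prop:primal-optimality}\ref{primal-opt-a} and \cref{prop:convex-dual}\ref{convex-dual-a}, combined with \cref{prop:primal-dual-equivalence}.

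\emph{Left inequality.} For fixed $\bm x$, the map $\eta\mapsto L_\gamma(\bm x,\eta)$ is affine: it equals a constant plus $\langle \eta,\sum_iA_ix_i-b\rangle$, and the constant is finite or $+\infty$ depending on whether $\bm x\in\dom h$, with $h$ as in \cref{eq:h-S}. If $\bm x\in\dom h$, then $L_\gamma(\bm x,\eta)\le L_\gamma(\bm x,y)$ for all $\eta$ holds exactly when the residual vanishes, i.e., $\sum_iA_ix_i=b$. We must also dispose of the degenerate case: if $L_\gamma(\bm x,y)=+\infty$, the right inequality forces $L_\gamma(\bm\xi,y)=+\infty$ for every $\bm\xi$, which contradicts properness of the $f_i$. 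Hence $\bm x\in\dom h$ at any saddle point.

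\emph{Right inequality.} We use feasibility and the convex, separable structure. The right inequality says that $\bm x$ minimizes the convex function $\bm\xi\mapsto h(\bm\xi)+\langle y,S\bm\xi-b\rangle+\frac\gamma2\|S\bm\xi-b\|^2$. The last two terms are finite, differentiable and convex, so the exact sum rule applies with no qualification condition. The optimality condition is therefore
\[
0\in\partial h(\bm x)+S^*y+\gamma S^*(S\bm x-b).
\]
Since $S\bm x=b$, the last term vanishes, and because $\partial h(\bm x)=\prod_i\partial f_i(x_i)$ this reads $0\in\partial f_i(x_i)+A_i^*y$ for each $i$. Combined with feasibility, this is exactly \cref{primal}.

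\emph{Converse.} Suppose $(\bm x,y)$ solves \cref{primal}. Feasibility gives the left inequality with equality. For the right inequality, the inclusions give that $\bm x$ minimizes the convex function $\bm\xi\mapsto h(\bm\xi)+\langle y,S\bm\xi-b\rangle$. Adding the nonnegative term $\frac\gamma2\|S\bm\xi-b\|^2$, which is zero at $\bm x$, preserves the minimality, so the right inequality follows.

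\emph{Final claim.} That $\bm x$ solves \cref{primal_opt} is \cref{prop:primal-optimality}\ref{primal-opt-a}. For $y$, \cref{prop:primal-dual-equivalence} gives that $y$ solves \cref{dual}, and \cref{prop:convex-dual}\ref{convex-dual-a} then gives that $y$ solves \cref{dual_opt}.

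The main obstacle is handling $+\infty$ values carefully. In particular, we must justify the sum rule in the right inequality: it holds because only finite-valued differentiable terms are added to $h$. We must also rule out the case $\bm x\notin\dom h$, which we do using properness of the $f_i$.
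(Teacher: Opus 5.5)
Your proof is correct and follows essentially the same route as the paper: you reduce the saddle-point condition to first-order conditions in $\bm x$ and $\eta$, compute them with the sum rule for $h$ plus a smooth term, and finish with the same three propositions. The only difference is that you justify the reduction directly from the definition, including an explicit treatment of the case $\bm x\notin\dom h$, rather than citing Rockafellar's saddle-function theorem, so your argument is self-contained.
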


\begin{proof}
Let $h$ and $S$ be the mappings defined in \cref{eq:h-S} so that the augmented Lagrangian can be evaluated as
\[
    L_\gamma(\bm x,y)=h(\bm x)
        +
        \left\langle y,S\bm x-b\right\rangle
        +
        \frac\gamma2\left\|S\bm x-b\right\|^2.
\]
By \cite[Theorem 1 and (2.19)]{Rockafellar1970Monotone}, $(\bm x,y)$ is a saddle point of $L_\gamma$ if and only if it satisfies the first order conditions $0\in \partial L_\gamma(\cdot,y)(\bm x)$ and $0\in \partial (-L_\gamma(\bm x,\cdot))(y)$. Since $h$ is proper, convex and lower semicontinuous, then
\[ \partial L_\gamma(\cdot,y)(\bm x)=\partial h(\bm x)+S^*y+\gamma S^*(S\bm x-b),\quad \partial (-L_\gamma(\bm x,\cdot))(y)=-(S\bm x-b). \]
Thus, the first order conditions are equivalent to $S\bm x = b$ and $0\in \partial h(\bm x)+S^*y$, i.e.,\cref{primal}.

Finally, \cref{prop:primal-optimality}\ref{primal-opt-a} shows that $\bm x$ solves \cref{primal_opt}. Moreover, by \cref{prop:primal-dual-equivalence}, $y$ solves \cref{dual}, and hence \cref{prop:convex-dual}\ref{convex-dual-a} shows that $y$ solves \cref{dual_opt}.
\end{proof}

We can specialize \cref{alg:graphADMM} to this context of convex minimization. To this end, we make use of the following lemma.

\begin{lemma}
\label{lem:x-update-argmin}
Let $f\in\Gamma_0(\R^\ell)$, $A\in\mathcal L(\R^\ell,\R^m)$, and $\gamma>0$.
Then,
\[
    \left(\partial f+\gamma A^*A\right)^{-1}(-\gamma A^*v)
    =
    \operatorname*{argmin}_{x\in\R^\ell}
    \left\{
        f(x)+\frac\gamma2\|Ax+v\|^2
    \right\}, \quad \forall v\in\R^m.
\]
\end{lemma}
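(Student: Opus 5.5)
The plan is to prove the identity by showing that both sides describe the same set of points $x$, namely the solutions of one and the same inclusion. Fermat's rule will identify the argmin with the zeros of a subdifferential. The sum rule will then make that subdifferential explicit.

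Fix $v\in\R^m$ and define the auxiliary function $q(x):=\frac\gamma2\|Ax+v\|^2$. This function is convex and continuously differentiable on all of $\R^\ell$, with gradient $\nabla q(x)=\gamma A^*(Ax+v)=\gamma A^*Ax+\gamma A^*v$. Set $\varphi:=f+q$, which belongs to $\Gamma_0(\R^\ell)$ because $f$ does and $q$ is finite and continuous.

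By Fermat's rule (e.g., \cite[Theorem~16.3]{bauschke}), $x$ belongs to the argmin on the right-hand side if and only if $0\in\partial\varphi(x)$. The key step is the sum rule $\partial(f+q)=\partial f+\nabla q$. It holds here because $\dom q=\R^\ell$, so $\ri(\dom q)=\R^\ell$ meets $\ri(\dom f)\neq\emptyset$; this is \cite[Theorem~23.8]{rockafellar1970convex}, exactly as used earlier in the proof of \cref{prop:primal-optimality}. Alternatively, one can invoke the standard rule for adding a differentiable convex function.

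With the sum rule in hand, we obtain the chain of equivalences
\[
0\in\partial f(x)+\gamma A^*Ax+\gamma A^*v
\iff -\gamma A^*v\in\bigl(\partial f+\gamma A^*A\bigr)(x)
\iff x\in\bigl(\partial f+\gamma A^*A\bigr)^{-1}(-\gamma A^*v).
\]
The last step is just the definition of the inverse of a set-valued operator. This shows that the two sets coincide as sets.

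We do not claim either set is nonempty or a singleton. The identity is an equality of (possibly empty or multivalued) sets. Nonemptiness in the algorithmic use is supplied separately by \cref{lem:well-posed}.

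The only delicate point is justifying the sum rule. It is immediate here because $q$ has full domain and is differentiable. Everything else is bookkeeping with the definition of the inverse operator.
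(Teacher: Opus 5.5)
Your proposal is correct and follows essentially the same route as the paper: rewrite membership in $\left(\partial f+\gamma A^*A\right)^{-1}(-\gamma A^*v)$ as the inclusion $0\in\partial f(x)+\gamma A^*(Ax+v)$, identify the right-hand side with $\partial\bigl(f+\frac\gamma2\|A\cdot+v\|^2\bigr)(x)$ via the sum rule for a full-domain differentiable convex term, and conclude by Fermat's rule. The only difference is the reference for the sum rule: you cite \cite[Theorem~23.8]{rockafellar1970convex}, while the paper cites \cite[Corollary~16.48(iii)]{bauschke}; this does not affect the argument.
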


\begin{proof}
For all $x\in\R^\ell$, the inclusion \(x\in\left(\partial f+\gamma A^*A\right)^{-1}(-\gamma A^*v)\) is equivalent to the inclusion \(0\in\partial f(x)+\gamma A^*(Ax+v)\). Since $x\mapsto\frac\gamma2\|Ax+v\|^2$ is everywhere differentiable with gradient $\gamma A^*(Ax+v)$, and has full domain, the sum rule (see, e.g.,~\cite[Corollary 16.48(iii)]{bauschke}) gives us \(0\in\partial\left(f+\frac\gamma2\|A\cdot+v\|^2\right)(x)\), which holds if and only if $x$ minimizes $f+\frac\gamma2\|A\cdot+v\|^2$ over $\R^\ell$.
\end{proof}

Applying the previous result, we obtain the specialization of \cref{alg:graphADMM} for solving \cref{primal_opt}, described in \cref{alg:graphADMM-convex}.

\begin{algorithm}[ht]
    \caption{Graph-based family of multiblock ADMM (convex minimization case)}
    \label{alg:graphADMM-convex}
     \begin{algorithmic}[1]\setlength{\itemsep}{3pt}
        \State \textbf{input:}
problem data $(f_i,A_i)_{i=1}^n$ and $b$ of \cref{primal_opt},
a decomposition $b=\sum_{i=1}^n b_i$, a graph setting
$(G,G',Z)$ as in \cref{def:graph-setting} (with degrees $d_1,\ldots,d_n$), a parameter $\tau>0$,
and a sequence of relaxation parameters
$(\theta_k)_{k\geq1}\subset{]\epsilon,2-\epsilon[}$ for some
$\epsilon\in]0,1[$

        \State \textbf{initialize:}
        $w_1^0,\ldots,w_{n-1}^0\in\mathbb{R}^m$

        \For{$k=0,1,2,\ldots$}
            \For{$i=1,\ldots,n$}

                \State
                \(
                    u_i^{k+1}
                    =
                    \sum_{j=1}^{n-1}Z_{i,j}w_j^k
                    +
                    2\sum_{(h,i)\in\mathcal E}y_h^{k+1}
                \)

                \State
                \(
                    x_i^{k+1}
                    \in
                    \operatorname*{argmin}_{x\in\mathbb{R}^{\ell_i}}
                    \left\{
                        f_i(x)
                        +
                        \dfrac{\tau}{2d_i}
                        \left\|
                            A_ix-b_i+\tfrac1\tau u_i^{k+1}
                        \right\|^2
                    \right\}
                \)
                \algcomment{primal sequence}

                \State
                \(
                    r_i^{k+1}
                    :=
                    A_ix_i^{k+1}-b_i
                \)
                \algcomment{residual sequence}

                \State
                \(
                    y_i^{k+1}
                    :=
                    \frac1{d_i}
                    \left(
                        u_i^{k+1}+\tau r_i^{k+1}
                    \right)
                \)
                \algcomment{dual shadow sequence}

            \EndFor

            \For{$j=1,\ldots,n-1$}

                \State
                \(
                    w_j^{k+1}
                    =
                    w_j^k
                    -
                    \theta_{k+1}
                    \sum_{i=1}^n Z_{i,j}y_i^{k+1}
                \)
                \algcomment{governing sequence}

            \EndFor
        \EndFor
    \end{algorithmic}
\end{algorithm}

\begin{remark}[Augmented-Lagrangian interpretation]
The primal updates of \cref{alg:graphADMM-convex} admit a local augmented-Lagrangian interpretation. Indeed, for each $i=1,\ldots,n$, define
\[
    \rho_i:=\frac{\tau}{d_i},
    \quad
    \lambda_i^k
    :=
    \frac1{d_i}
    \sum_{j=1}^{n-1}Z_{i,j}w_j^k,
    \quad\text{and}\quad
    q_i^{k+1}
    :=
    \frac2\tau
    \sum_{(h,i)\in\mathcal E}y_h^{k+1}.
\]
Then the update of $x_i^{k+1}$ can equivalently be written as
\[
    x_i^{k+1}
    \in
    \argmin_{x\in\mathbb{R}^{\ell_i}}
    \left\{
        f_i(x)
        +
        \left\langle
            \lambda_i^k,A_ix-b_i
        \right\rangle
        +
        \frac{\rho_i}{2}
        \left\|
            A_ix-b_i+q_i^{k+1}
        \right\|^2
    \right\}.
\]
Thus, each primal step has the form of an augmented-Lagrangian minimization with a block-dependent penalty and a graph-dependent shift. In general, however, $q_i^{k+1}$ need not coincide with the sum of the residuals of the remaining primal blocks.
\end{remark}

\begin{assumption}[Convex setting]
\label{as:convex}
Let $f_i\in\Gamma_0(\R^{\ell_i})$ and $A_i\in\R^{m\times\ell_i}$, for $i=1,\ldots,n$, and let $b\in\R^m$ defining \cref{primal_opt}. The following conditions hold:
\begin{enumerate}[leftmargin=1.8cm,label={\rm(B\arabic*)}]
    \item
    \label{B1}
    At least one of the following is satisfied:
    \begin{enumerate}[label={\rm(\alph*)}]
        \item
        \label{B1a}
        the augmented Lagrangian $L_\gamma$ has a saddle point for some $\gamma\geq0$;

        \item
        \label{B1b}
        \cref{primal_opt} has a solution and there exists
        $\widehat{\bm x}=(\widehat x_1,\ldots,\widehat x_n)$ with
        $\sum_{i=1}^nA_i\widehat x_i=b$ and $\widehat x_i\in\ri (\dom f_i)$, for all $i=1,\ldots,n$;

        \item
        \label{B1c}
        \cref{dual_opt} has a solution and there exists $\widehat y\in\R^m$ with
        $-A_i^*\widehat y\in\ri (\dom f_i^*)$, for all $i=1,\ldots,n$.
    \end{enumerate}

    \item
    \label{B2}
    For each $i\in\{1,\ldots,n\}$,  at least one of the following is satisfied:
    \begin{enumerate}[label={\rm(\alph*)}]
    \item
    \label{B2a}
    $A_i$ is injective;

    \item
    \label{B2b}
    $f_i$ is strongly convex;

    \item
    \label{B2c}
    \(\ran A_i^* \cap \ri (\dom f_i^*) \neq\emptyset\).
    \end{enumerate}
\end{enumerate}
\end{assumption}

The next lemma shows the identification between \cref{as:convex} and \cref{as:standing}.

\begin{lemma}
\label{lem:convex-CQ}
Let $F_i=\partial f_i$ for $i=1,\ldots,n$.
\begin{enumerate}
    \item\label{B1->A1} Any item in \cref{B1} implies \ref{A1}.
    \item\label{B2<->A2} Each of \ref{B2}\ref{B2a} and \ref{B2}\ref{B2b} implies \ref{B2}\ref{B2c}, which is equivalent to \ref{A2}.
    \item\label{B1c->B2} \ref{B1}\ref{B1c} implies \ref{B2}.
\end{enumerate}
\end{lemma}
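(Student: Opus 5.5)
For \ref{B1->A1}, I will treat the three alternatives of \ref{B1} one at a time, each reduced to an earlier result. Under \ref{B1}\ref{B1a}, \cref{prop:saddle-points} says directly that any saddle point of $L_\gamma$ solves \cref{primal}. Under \ref{B1}\ref{B1b}, take a minimizer $\bm x$ of \cref{primal_opt}; the qualification point $\widehat{\bm x}$ lets us invoke \cref{prop:primal-optimality}\ref{primal-opt-b}, which produces $y$ with $(\bm x,y)$ solving \cref{primal}. Under \ref{B1}\ref{B1c}, take a minimizer $y$ of \cref{dual_opt}; by \cref{prop:convex-dual}\ref{convex-dual-b} it solves \cref{dual}, and \cref{prop:primal-dual-equivalence} then gives $\bm x$ with $(\bm x,y)$ solving \cref{primal}.

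For \ref{B2<->A2}, the idea is that in finite dimensions strong relative interior coincides with relative interior. Since $\ran F_i=\ran\partial f_i=\dom\partial f_i^*$, and $\ri\dom\partial f_i^*=\ri\dom f_i^*$ (both sets have the same relative interior, e.g.\ by \cite[Theorem~23.4]{rockafellar1970convex}), I will use \cite[Corollary~6.6]{rockafellar1970convex} to write $\ri(C_1-C_2)=\ri C_1-\ri C_2$, together with $\ri\ran A_i^*=\ran A_i^*$. This gives
\[0\in\ri(\ran F_i-\ran A_i^*)\iff 0\in\ri(\dom f_i^*)-\ran A_i^*\iff \ran A_i^*\cap\ri(\dom f_i^*)\neq\emptyset,\]
which is the equivalence \ref{A2}$\iff$\ref{B2}\ref{B2c}.

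The two implications into \ref{B2}\ref{B2c} are then short. If $f_i$ is strongly convex, then $f_i^*$ is differentiable with full domain, so $\ri\dom f_i^*=\R^{\ell_i}$ meets $\ran A_i^*$. If $A_i$ is injective, then $\ran A_i^*=\R^{\ell_i}$, so it meets the nonempty set $\ri\dom f_i^*$, which is nonempty because $f_i^*$ is proper and convex.

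For \ref{B1c->B2}, the point $-\widehat y$ gives $A_i^*(-\widehat y)=-A_i^*\widehat y\in\ran A_i^*\cap\ri(\dom f_i^*)$, which is \ref{B2}\ref{B2c} for every $i$. The only delicate step in the whole argument is the justification $\ri\ran\partial f_i=\ri\dom f_i^*$ in \ref{B2<->A2}, which is where the finite-dimensional citation is needed.
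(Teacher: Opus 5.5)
Your proposal is correct and follows the paper's proof essentially step by step. You reduce the three cases of \ref{B1} to \cref{prop:saddle-points}, to \cref{prop:primal-optimality}\ref{primal-opt-b}, and to \cref{prop:convex-dual}\ref{convex-dual-b} combined with \cref{prop:primal-dual-equivalence}, and you base \ref{A2}$\Leftrightarrow$\ref{B2}\ref{B2c} on the same sandwich $\ri(\dom f_i^*)\subseteq\ran\partial f_i\subseteq\dom f_i^*$; the only differences are in citations (Rockafellar's relative-interior-of-a-sum formula, Corollary~6.6.2, in place of the paper's Bauschke--Combettes facts on $\sri$, and, as in the paper, Corollary~6.3.1 rather than Theorem~23.4 alone to turn the sandwich into equal relative interiors).
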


\begin{proof}
\ref{B1->A1}: 
Suppose that \cref{B1}\ref{B1a} holds, i.e., $L_\gamma$ has a saddle point $(\bm x,y)$. By \cref{prop:saddle-points}, $(\bm x,y)$ solves \cref{primal}. If instead \cref{B1}\ref{B1b} holds, then, in view of \cref{prop:primal-optimality}\ref{primal-opt-b}, there exists $y$ such that $(\bm x,y)$ solves \cref{primal}, where $\bm x$ is the assumed solution of \cref{primal_opt}. Finally, if \cref{B1}\ref{B1c} holds, then $y$ solves \cref{dual} by \cref{prop:convex-dual}\ref{convex-dual-b}. Therefore, by \cref{prop:primal-dual-equivalence}, there exists $\bm x$ such that $(\bm x,y)$ solves \cref{primal}.
\medskip

\ref{B2<->A2}:
Fix $i\in\{1,\ldots,n\}$. If $A_i$ is injective, then $\ran A_i^*=\R^{\ell_i}$. If instead $f_i$ is strongly convex, then $\dom f_i^*=\R^{\ell_i}$ (see, e.g., \cite[Proposition 14.15]{bauschke}). In both cases, \ref{B2}\ref{B2c} trivially holds.

On the other hand, since $\Hi_i=\mathbb{R}^{\ell_i}$ is finite-dimensional and $\ran A_i^*$ is a subspace, \ref{A2} amounts to $\ran A_i^*\cap\ri (\ran\partial f_i)\neq\emptyset$ (\cite[Fact~6.14(i) and Corollary~6.15(ii)]{bauschke}). Since $\ri (\dom f_i^*)\subseteq\ran\partial f_i\subseteq\dom f_i^*$ (\cite[Theorem~23.4 and Corollary~23.5.1]{rockafellar1970convex}), we get $\ri (\ran\partial f_i)=\ri (\dom f_i^*)$ (\cite[Corollary~6.3.1]{rockafellar1970convex}), which proves the equivalence \ref{A2}$\Leftrightarrow$\ref{B2}\ref{B2c}.
\medskip

\ref{B1c->B2}:
It suffices to see that $-A_i^*\widehat y\in\ran A_i^*\cap\ri (\dom f_i^*)$.
\end{proof}

\begin{corollary}[Convergence of \cref{alg:graphADMM-convex}]
\label{cor:convergence-convex}
Suppose that \cref{as:convex} holds, and let $(\bm x^k)_{k\geq1}$ and $(\bm y^k)_{k\geq1}$ be generated by
\cref{alg:graphADMM-convex}. The following hold:
\begin{enumerate}[label={\rm(\roman*)}]
    \item \label{cor:conv_I}
    There exists a solution $y^\star$ of \cref{dual_opt} such that \(\bm y^k\to\bm1_ny^\star\).    

    \item \label{cor:conv_II}
    The feasibility gap vanishes; i.e., $\sum_{i=1}^nA_ix_i^k\to b$.
    
    \item \label{cor:conv_III}
    Every sequential cluster point of $(\bm x^k)_{k\geq1}$ solves \cref{primal_opt}.
    
    \item \label{cor:conv_IV}
    There exists $\bm x^\star=(x_1^\star,\ldots,x_n^\star)$ solving
    \cref{primal_opt} such that $A_ix_i^k\to A_ix_i^\star$ for all $i=1,\ldots,n$.
\end{enumerate}
Furthermore, if for each $i=1,\ldots,n$ either \cref{B2}\cref{B2a} or \cref{B2}\cref{B2b} holds, then $\bm x^k\to\bm x^\star$.
\end{corollary}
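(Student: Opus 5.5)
The plan is to reduce everything to \cref{thm:convergence} and \cref{thm:x-convergence}. The first step is to check that \cref{alg:graphADMM-convex} is exactly \cref{alg:graphADMM} with $F_i=\partial f_i$. Apply \cref{lem:x-update-argmin} with $f=f_i$, $A=A_i$, $\gamma=\tau/d_i$ and $v=\tfrac1\tau u_i^{k+1}-b_i$. Then
\[
-\gamma A_i^*v=-\tfrac{1}{d_i}A_i^*\bigl(u_i^{k+1}-\tau b_i\bigr),
\]
so the argmin in step~5 of \cref{alg:graphADMM-convex} coincides with the resolvent-type inclusion in step~6 of \cref{alg:graphADMM}. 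All the remaining updates are identical. Next, by \cref{lem:convex-CQ}, \cref{as:convex} implies \cref{as:standing}: \ref{B1} yields \ref{A1}, and \ref{B2} yields \ref{A2}. Hence \cref{thm:convergence} applies verbatim.

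Items \ref{cor:conv_I}--\ref{cor:conv_IV} then follow because we are in finite dimensions, where weak and strong convergence coincide.
\begin{enumerate}[label={\rm(\roman*)}]
\item \cref{thm:convergence}\ref{thm:convergence_y} gives $y^\star$ solving \cref{dual} with $\bm y^k\to\bm 1_n y^\star$. By \cref{prop:convex-dual}\ref{convex-dual-a}, $y^\star$ also solves \cref{dual_opt}.
\item This is immediate from \cref{thm:convergence}\ref{thm:convergence_r}.
\item \cref{thm:convergence}\ref{thm:convergence_x} shows that each cluster point $\bm x$ makes $(\bm x,y^\star)$ a solution of \cref{primal}. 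Then \cref{prop:primal-optimality}\ref{primal-opt-a} shows that $\bm x$ solves \cref{primal_opt}.
\item \cref{thm:convergence}\ref{thm:convergence_z} provides $\bm x^\star$ with $(\bm x^\star,y^\star)$ solving \cref{primal} and $A_ix_i^k\to A_ix_i^\star$. Again \cref{prop:primal-optimality}\ref{primal-opt-a} shows that $\bm x^\star$ solves \cref{primal_opt}.
\end{enumerate}

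For the final claim, fix $i$ and use \cref{thm:x-convergence} with the same $\bm x^\star$, splitting into two cases.
\begin{itemize}
\item If \ref{B2}\ref{B2a} holds, then $A_i$ is an injective matrix, whose range is automatically closed. \cref{thm:x-convergence}\ref{thm:x-conv-a} gives $x_i^k\wto x_i^\star$, which is strong convergence in $\R^{\ell_i}$.
\item If \ref{B2}\ref{B2b} holds with modulus $\beta>0$, then $\partial f_i$ is $\beta$-strongly monotone. It is therefore uniformly monotone with modulus $\phi(t)=\beta t^2$ at every point of its graph, in particular at $x_i^\star$ relative to $-A_i^*y^\star\in\partial f_i(x_i^\star)$. \cref{thm:x-convergence}\ref{thm:x-conv-b} then yields $x_i^k\to x_i^\star$.
\end{itemize}
Combining the two cases over all blocks gives $\bm x^k\to\bm x^\star$.

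There is no real obstacle. The only points needing care are the exact bookkeeping in matching the argmin form with the resolvent inclusion (the choice of $v$ and $\gamma$ above), and noting that strong convexity of $f_i$ gives strong monotonicity of $\partial f_i$ (a standard fact, e.g.\ in \cite{bauschke}). Finally, the limit $\bm x^\star$ in the last claim must be the same one produced in \ref{cor:conv_IV}, which is guaranteed because \cref{thm:x-convergence} is stated for the $\bm x^\star$ of \cref{thm:convergence}\ref{thm:convergence_z}.
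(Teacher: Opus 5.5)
Your proposal is correct and follows essentially the same route as the paper: identify \cref{alg:graphADMM-convex} with \cref{alg:graphADMM} via \cref{lem:x-update-argmin}, apply \cref{thm:convergence} together with \cref{prop:convex-dual}\ref{convex-dual-a} and \cref{prop:primal-optimality}\ref{primal-opt-a} (using that weak and strong convergence coincide in finite dimensions), and finish with \cref{thm:x-convergence}\ref{thm:x-conv-a} or \ref{thm:x-conv-b} for the same $\bm x^\star$. You also spell out two steps the paper leaves implicit: the passage from \cref{as:convex} to \cref{as:standing} through \cref{lem:convex-CQ}, and the matching $\gamma=\tau/d_i$, $v=\frac1\tau u_i^{k+1}-b_i$ in the argmin identification.
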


\begin{proof}
Since \cref{as:convex} holds, by \cref{lem:x-update-argmin} the sequences generated by \cref{alg:graphADMM-convex} coincide with those generated by \cref{alg:graphADMM}. Since all the spaces involved are finite-dimensional, weak and strong convergence coincide, so \cref{thm:convergence}\ref{thm:convergence_y} and \cref{prop:convex-dual}\ref{convex-dual-a} give \ref{cor:conv_I}; \cref{thm:convergence}\ref{thm:convergence_r} gives \ref{cor:conv_II}; and \cref{thm:convergence}\ref{thm:convergence_x} and \ref{thm:convergence_z}, combined with \cref{prop:primal-optimality}\ref{primal-opt-a}, give \ref{cor:conv_III} and \ref{cor:conv_IV}, respectively.

Finally, if $A_i$ is injective, then the convergence follows from \cref{thm:x-convergence}\ref{thm:x-conv-a}, since the range in finite dimensions is always closed. On the other hand, suppose that $f_i$ is strongly convex. Then $\partial f_i$ is strongly monotone at $x_i^\star$, and \cref{thm:x-convergence}\ref{thm:x-conv-b} yields the convergence.
\end{proof}

\section{Some specific instances of the algorithm}
\label{sec:instances}

We now turn to
particular instances of our family of algorithms obtained by specializing the graph setting of \cref{def:graph-setting}.

\subsection{Recovering some known ADMM algorithms}
\label{subsec:knownalgorithms}

We first show how several multiblock ADMM schemes already present in the literature (namely, classical ADMM, the method of Malitsky--Tam, and that of Alcantara, Dao, and Takeda) are recovered as particular cases of \cref{alg:graphADMM-convex}.

\begin{example}[Classical ADMM]\label{ex:classADMM}
We begin with the classical two-block ADMM~\cref{eq:clasADMM}. Consider the problem
\begin{equation}
\label{eq:two-block-problem}
    \min_{x_1\in\mathbb{R}^{\ell_1},\,x_2\in\mathbb{R}^{\ell_2}}
    f_1(x_1)+f_2(x_2)
    \quad\text{s.t.}\quad
    A_1x_1+A_2x_2=b.
\end{equation}
Since $n=2$, we consider the unique algorithmic graph $G$ of order two, with $\mathcal E=\{(1,2)\}$, and take $G'=G$. In particular, $d_1=d_2=1$. We choose the onto decomposition of $\operatorname{Lap}(G')$ given by
\begin{equation}
\label{eq:Z-classADMM}
    Z=
    \begin{bmatrix}
        1\\-1
    \end{bmatrix}.
\end{equation}

To recover the usual order of the classical ADMM iteration, we apply \cref{alg:graphADMM-convex} to the reordered blocks $(f_2,A_2,b)$ and $(f_1,A_1,0)$, and take $\theta_k=1$ for all $k\geq1$. For convenience of notation, we retain the original block indices for the variables; that is, the variables associated with the first node are indexed by $2$, while those of the second node, by $1$. With this convention, $u_2^{k+1}=w^k$ and $u_1^{k+1}=-w^k+2y_2^{k+1}$, so \cref{alg:graphADMM-convex} becomes
\begin{equation}
\label{eq:classical-admm-graph-form}
\left\{
\begin{aligned}
    x_2^{k+1}
    &\in
    \operatorname*{argmin}_{x_2\in\mathbb{R}^{\ell_2}}
    \left\{
        f_2(x_2)
        +
        \frac{\tau}{2}
        \left\|
            A_2x_2-b+\frac1\tau w^k
        \right\|^2
    \right\},
    \\
    y_2^{k+1}
    &:=
    w^k+\tau (A_2x_2^{k+1}-b),
    \\
    x_1^{k+1}
    &\in
    \operatorname*{argmin}_{x_1\in\mathbb{R}^{\ell_1}}
    \left\{
        f_1(x_1)
        +
        \frac{\tau}{2}
        \left\|
            A_1x_1+\frac1\tau (-w^k+2y_2^{k+1})
        \right\|^2
    \right\},
    \\
    y_1^{k+1}
    &:=
    -w^k+2y_2^{k+1}+\tau A_1x_1^{k+1},
    \\
    w^{k+1}
    &=
    w^k-y_2^{k+1}+y_1^{k+1}.
\end{aligned}
\right.
\end{equation}
We next eliminate the governing sequence $(w^k)_k$. From the last two updates in \cref{eq:classical-admm-graph-form}, we obtain that
\begin{equation}
\label{eq:classical-admm-w-shadow}
    w^k
    =
    y_2^k+\tau A_1x_1^k, \quad \forall k\geq1.
\end{equation}
Moreover, the $y_2^{k+1}$ update yields $-w^k+2y_2^{k+1}=y_2^{k+1}+\tau(A_2x_2^{k+1}-b)$. Substituting this identity, together with \cref{eq:classical-admm-w-shadow} in \cref{eq:classical-admm-graph-form}, we obtain
\begin{equation*}
\left\{
\begin{aligned}
    x_2^{k+1}
    &\in
    \operatorname*{argmin}_{x_2\in\mathbb{R}^{\ell_2}}
    \left\{
        f_2(x_2)
        +
        \frac{\tau}{2}
        \left\|
            A_1x_1^k+A_2x_2-b+\frac1\tau y_2^k
        \right\|^2
    \right\},
    \\
    y_2^{k+1}
    &:=
    y_2^k+\tau (A_1x_1^k+A_2x_2^{k+1}-b),
    \\
    x_1^{k+1}
    &\in
    \operatorname*{argmin}_{x_1\in\mathbb{R}^{\ell_1}}
    \left\{
        f_1(x_1)
        +
        \frac{\tau}{2}
        \left\|
            A_1x_1+A_2x_2^{k+1}-b+\frac{1}{\tau}{y_2^{k+1}}
        \right\|^2
    \right\},
    \\
    y_1^{k+1}
    &:=
    y_2^{k+1}+\tau(A_1x_1^{k+1}+A_2x_2^{k+1}-b).
\end{aligned}
\right.
\end{equation*}
Observe that $y_1^{k+1}$ is no longer needed for the other updates, so we remove it from the iteration. Expanding the squared norms and discarding independent terms, we obtain the iteration
\begin{equation}
\label{eq:classical-admm-cyclic}
\left\{
\begin{aligned}
    x_2^{k+1}
    &\in
    \operatorname*{argmin}_{x_2\in\mathbb{R}^{\ell_2}}
    \left\{
        f_2(x_2)
        +
        \langle y_2^k,A_2x_2\rangle
        +
        \frac{\tau}{2}
        \left\|
            A_1x_1^k+A_2x_2-b
        \right\|^2
    \right\},
    \\
    y_2^{k+1}
    &=
    y_2^k
    +
    \tau
    \left(
        A_1x_1^k+A_2x_2^{k+1}-b
    \right),
    \\
    x_1^{k+1}
    &\in
    \operatorname*{argmin}_{x_1\in\mathbb{R}^{\ell_1}}
    \left\{
        f_1(x_1)
        +
        \langle y_2^{k+1},A_1x_1\rangle
        +
        \frac{\tau}{2}
        \left\|
            A_1x_1+A_2x_2^{k+1}-b
        \right\|^2
    \right\}.
\end{aligned}
\right.
\end{equation}
Finally, by starting this cyclic iteration at the $x_1$-update and relabeling $x_2^{k+1}$ as $x_2^k$ and $y_2^{k+1}$ as $y^k$, we recover the standard form of ADMM in \cref{eq:clasADMM}.
\end{example}

\begin{example}[Malitsky--Tam multiblock ADMM]
We next show that the multiblock ADMM proposed by Malitsky and Tam~\cite[Algorithm~3]{malitsky2023resolvent} is also recovered as a particular instance of \cref{alg:graphADMM-convex}. For $n\geq3$, let $G'=(\mathcal N,\mathcal E')$ be the sequential graph, with $\mathcal N=\{1,\ldots,n\}$ and
\[\mathcal E' = \{(i,i+1):i=1,\ldots,n-1\},\]
and let $G=(\mathcal N,\mathcal E)$ be the ring graph, where $\mathcal E = \mathcal E'\cup\{(1,n)\}$. Observe that $d_i=2$, for all $i=1,\ldots,n$. Since $G'$ is a tree, we choose its incidence matrix as the onto decomposition of $\Lap(G')$, given componentwise by
\begin{equation}
\label{eq:Z-MT}
    Z_{i,j}
    =
    \begin{cases}
        1,  & \text{if } i=j,\\
        -1, & \text{if } i=j+1,\\
        0,  & \text{otherwise}.
    \end{cases}
\end{equation}
The decomposition of $b$ is taken as $(b_1,\ldots,b_n)=(0,\ldots,0,b)$.

From the structure of $G$ and \cref{eq:Z-MT}, the updates of the auxiliary variables in \cref{alg:graphADMM-convex} take the form
\begin{equation}
\label{eq:u-MT-recursive}
\left\{
\begin{aligned}
    u_1^{k+1}
    &=
    w_1^k,\\
    u_i^{k+1}
    &=
    w_i^k-w_{i-1}^k+2y_{i-1}^{k+1},
    \quad i=2,\ldots,n-1,\\
    u_n^{k+1}
    &=
    -w_{n-1}^k+2y_{n-1}^{k+1}+2y_1^{k+1}.
\end{aligned}
\right.
\end{equation}
For $i=1,\ldots,n-1$, the update of the corresponding dual variables becomes \(2y_i^{k+1} = u_i^{k+1}+\tau A_ix_i^{k+1}.\) Thus, by induction in \cref{eq:u-MT-recursive}, we obtain
\begin{equation}
\label{eq:u-MT}
\left\{
\begin{aligned}
    u_1^{k+1}
    &=
    w_1^k,\\
    u_i^{k+1}
    &=
    w_i^k
    +
    \tau\textstyle\sum_{j=1}^{i-1}A_jx_j^{k+1},
    \quad i=2,\ldots,n-1,\\
    u_n^{k+1}
    &=
    w_1^k
    +
    \tau
    \left(
        2A_1x_1^{k+1}
        +
        \textstyle\sum_{j=2}^{n-1}A_jx_j^{k+1}
    \right).
\end{aligned}
\right.
\end{equation}
Consequently, the primal updates in \cref{alg:graphADMM-convex} reduce to
\begin{equation}
\label{eq:MT-primal-updates}
\left\{
\begin{aligned}
    x_1^{k+1}
    &\in
    \operatorname*{argmin}_{x\in\mathbb{R}^{\ell_1}}
    \left\{
        f_1(x)
        +
        \frac{\tau}{4}
        \left\|
            A_1x+\tfrac1\tau w_1^k
        \right\|^2
    \right\},
    \\
    x_i^{k+1}
    &\in
    \operatorname*{argmin}_{x\in\mathbb{R}^{\ell_i}}
    \left\{
        f_i(x)
        +
        \frac{\tau}{4}
        \left\|
            \textstyle\sum_{j=1}^{i-1}A_jx_j^{k+1}
            +A_ix+\frac1\tau w_i^k
        \right\|^2
    \right\},
    \quad i=2,\ldots,n-1,
    \\
    x_n^{k+1}
    &\in
    \operatorname*{argmin}_{x\in\mathbb{R}^{\ell_n}}
    \left\{
        f_n(x)
        +
        \frac{\tau}{4}
        \left\|
            2A_1x_1^{k+1}
            +
            \textstyle\sum_{j=2}^{n-1}A_jx_j^{k+1}
            +A_nx-b
            +\frac1\tau w_1^k
        \right\|^2
    \right\}.
\end{aligned}
\right.
\end{equation}

On the other hand, \cref{eq:u-MT} and the dual updates give
\begin{equation}
\label{eq:y-MT}
    2y_i^{k+1}
    = \left\{\begin{array}{ll}
    w_i^k
    +
    \tau\sum_{j=1}^iA_jx_j^{k+1},
    & \text{if } i\in\{1,\ldots,n-1\},\\
    w_1^k
    +
    \tau
    \left(
        2A_1x_1^{k+1}
        +
        \sum_{j=2}^nA_jx_j^{k+1}
        -b
    \right), & \text{if } i=n.
    \end{array}\right.
\end{equation}
Using that \((Z^*\bm y^{k+1})_i = y_i^{k+1}-y_{i+1}^{k+1}\), for all $i=1,\ldots,n-1$, together with
\cref{eq:y-MT}, the update of the governing sequence reduces to
\begin{equation}
\label{eq:w-MT}
\left\{
\begin{aligned}
    w_i^{k+1}
    &=
    w_i^k
    +
    \frac{\theta_{k+1}}{2}
    \left(
        w_{i+1}^k-w_i^k
    \right)
    +
    \frac{\theta_{k+1}\tau}{2}
    A_{i+1}x_{i+1}^{k+1},
    \quad i=1,\ldots,n-2,
    \\
    w_{n-1}^{k+1}
    &=
    w_{n-1}^k
    +
    \frac{\theta_{k+1}}{2}
    \left(
        w_1^k-w_{n-1}^k
    \right)
    +
    \frac{\theta_{k+1}\tau}{2}
    \left(
        A_1x_1^{k+1}
        +
        A_nx_n^{k+1}
        -b
    \right).
\end{aligned}
\right.
\end{equation}
Finally, define $v_i^k:=\frac1\tau w_i^k$, for each $i=1,\ldots,n-1$. Taking $\tau=2$ and $\theta_k=2\gamma$ for all $k\geq1$, with $\gamma\in(0,1)$, \cref{eq:MT-primal-updates} and \cref{eq:w-MT} become
\begin{equation*}
\label{eq:MT-final}
\left\{
\begin{aligned}
    x_1^{k+1}
    &\in
    \operatorname*{argmin}_{x\in\mathbb{R}^{\ell_1}}
    \left\{
        f_1(x)
        +
        \frac12
        \left\|
            A_1x+v_1^k
        \right\|^2
    \right\},
    \\
    x_i^{k+1}
    &\in
    \operatorname*{argmin}_{x\in\mathbb{R}^{\ell_i}}
    \left\{
        f_i(x)
        +
        \frac12
        \left\|
            \textstyle\sum_{j=1}^{i-1}A_jx_j^{k+1}
            +A_ix+v_i^k
        \right\|^2
    \right\},
    \quad i=2,\ldots,n-1,
    \\
    x_n^{k+1}
    &\in
    \operatorname*{argmin}_{x\in\mathbb{R}^{\ell_n}}
    \left\{
        f_n(x)
        +
        \frac12
        \left\|
            2A_1x_1^{k+1}
            +
            \textstyle\sum_{j=2}^{n-1}A_jx_j^{k+1}
            +A_nx-b
            +v_1^k
        \right\|^2
    \right\},
    \\
    v_i^{k+1}
    &=
    v_i^k
    +
    \gamma
    \left(
        v_{i+1}^k-v_i^k
    \right)
    +
    \gamma A_{i+1}x_{i+1}^{k+1},
    \quad i=1,\ldots,n-2,
    \\
    v_{n-1}^{k+1}
    &=
    v_{n-1}^k
    +
    \gamma
    \left(
        v_1^k-v_{n-1}^k
    \right)
    +
    \gamma
    \left(
        A_1x_1^{k+1}
        +
        A_nx_n^{k+1}
        -b
    \right),
\end{aligned}
\right.
\end{equation*}
which is precisely the Malitsky--Tam multiblock ADMM in \cite[Algorithm~3]{malitsky2023resolvent}.
\end{example}

\begin{example}[Alcantara--Dao--Takeda multiblock ADMM]
We finally show that a particular convex instance of the multiblock ADMM proposed by Alcantara, Dao, and Takeda~\cite[Algorithm~2]{alcantaraADMM} is also recovered from \cref{alg:graphADMM-convex}.

For $n\geq3$, let $G=G'=(\mathcal N,\mathcal E)$ be the parallel-up
graph, where
\[
    \mathcal N=\{1,\ldots,n\},
    \quad
    \mathcal E=\{(1,i):i=2,\ldots,n\}.
\]
Thus, $G$ is a tree with $d_1=n-1$ and $d_i=1$ for $i=2,\ldots,n$. We choose its incidence matrix as the onto decomposition of $\Lap(G)$, namely,
\begin{equation}
\label{eq:Z-ADT}
    Z=
    \begin{bmatrix}
        \bm1_{n-1}^*\\
        -I_{n-1}
    \end{bmatrix}.
\end{equation}
We apply \cref{alg:graphADMM-convex} to the reordered blocks \((f_n,A_n,b),(f_1,A_1,0),\ldots,(f_{n-1},A_{n-1},0)\). As in the classical ADMM case, for convenience of notation we retain the original block indices for the primal and dual variables. Thus, the variables associated with the first node are indexed by $n$, while those associated with nodes $2,\ldots,n$ are indexed by $1,\ldots,n-1$, respectively.

From \cref{eq:Z-ADT} and the structure of the parallel-up graph, the auxiliary variables satisfy
\[
    u_n^{k+1}
    =\sum_{j=1}^{n-1}w_j^k
    ,
    \quad\text{and}\quad
    u_i^{k+1}
    =
    -w_i^k+2y_n^{k+1},
    \quad \forall i=1,\ldots,n-1.
\]
Consequently, \cref{alg:graphADMM-convex} becomes
\begin{equation}
\label{eq:ADT-pre-shift}
\left\{
\begin{aligned}
    x_n^{k+1}
    &\in
    \operatorname*{argmin}_{x\in\mathbb{R}^{\ell_n}}
    \left\{
        f_n(x)
        +
        \frac{\tau}{2(n-1)}
        \left\|
            A_nx-b+\textstyle\frac1\tau\sum_{j=1}^{n-1}w_j^k
        \right\|^2
    \right\},
    \\
    y_n^{k+1}
    &=
    \frac1{n-1}
    \left(
        \textstyle\sum_{j=1}^{n-1}w_j^k
        +
        \tau(A_nx_n^{k+1}-b)
    \right),
    \\
    x_i^{k+1}
    &\in
    \operatorname*{argmin}_{x\in\mathbb{R}^{\ell_i}}
    \left\{
        f_i(x)
        +
        \frac{\tau}{2}
        \left\|
            A_ix+\tfrac1\tau(-w_i^k+2y_n^{k+1})
        \right\|^2
    \right\},
    \quad i=1,\ldots,n-1,
    \\
    y_i^{k+1}
    &=
    -w_i^k+2y_n^{k+1}
    +
    \tau A_ix_i^{k+1},
    \quad i=1,\ldots,n-1,
    \\
    w_i^{k+1}
    &=
    w_i^k-\theta_{k+1}
    \left(
        y_n^{k+1}-y_i^{k+1}
    \right),
    \quad i=1,\ldots,n-1.
\end{aligned}
\right.
\end{equation}
Cyclically shifting the iteration so that the first $n-1$ primal blocks are computed before the $n$th block, the previous scheme is equivalently written as
\begin{equation}
\label{eq:ADT-post-shift}
\left\{
\begin{aligned}
    x_i^{k+1}
    &\in
    \operatorname*{argmin}_{x\in\mathbb{R}^{\ell_i}}
    \left\{
        f_i(x)
        +
        \frac{\tau}{2}
        \left\|
            A_ix+\tfrac1\tau(-w_i^k+2y_n^k)
        \right\|^2
    \right\},
    \quad i=1,\ldots,n-1,
    \\
    y_i^{k+1}
    &=
    -w_i^k+2y_n^k
    +
    \tau A_ix_i^{k+1},
    \quad i=1,\ldots,n-1,
    \\
    w_i^{k+1}
    &=
    w_i^k-\theta_{k+1}
    \left(
        y_n^k-y_i^{k+1}
    \right),
    \quad i=1,\ldots,n-1,
    \\
    x_n^{k+1}
    &\in
    \operatorname*{argmin}_{x\in\mathbb{R}^{\ell_n}}
    \left\{
        f_n(x)
        +
        \frac{\tau}{2(n-1)}
        \left\|
            A_nx-b+\textstyle\frac1\tau\sum_{i=1}^{n-1}w_i^{k+1}
        \right\|^2
    \right\},
    \\
    y_n^{k+1}
    &=
    \frac1{n-1}\textstyle\sum_{i=1}^{n-1}w_i^{k+1}
    +
    \frac{\tau}{n-1}
    \left(
        A_nx_n^{k+1}-b
    \right).
\end{aligned}
\right.
\end{equation}

To compare \cref{eq:ADT-post-shift} with the method of Alcantara--Dao--Takeda, introduce the auxiliary variables
\begin{equation}
\label{eq:ADT-s-def}
    s_i^k:=y_n^k-w_i^k,\quad \forall i=1,\ldots,n-1.
\end{equation}
Then \(-w_i^k+2y_n^k = y_n^k+s_i^k\) and $w_i^k=y_n^k-s_i^k$, for all $i=1,\ldots,n-1$. Using these identities, after some computations \cref{eq:ADT-post-shift} can be equivalently expressed as
\begin{equation}
\label{eq:ADT-final}
\left\{
\begin{aligned}
    x_i^{k+1}
    &\in
    \operatorname*{argmin}_{x\in\mathbb{R}^{\ell_i}}
    \left\{
        f_i(x)
        +
        \frac{\tau}{2}
        \left\|
            A_ix+\frac{y_n^k+s_i^k}{\tau}
        \right\|^2
    \right\},
    \quad i=1,\ldots,n-1,
    \\
    y_i^{k+1}
    &=
    y_n^k+s_i^k+\tau A_ix_i^{k+1},
    \quad i=1,\ldots,n-1,
    \\
    w_i^{k+1}
    &=
    (1-\theta_{k+1})y_n^k
    +
    \theta_{k+1} y_i^{k+1}
    -
    s_i^k,
    \quad i=1,\ldots,n-1,
    \\
    x_n^{k+1}
    &\in
    \operatorname*{argmin}_{x\in\mathbb{R}^{\ell_n}}
    \left\{
        f_n(x)
        +
        \frac{\tau}{2(n-1)}
        \left\|
            A_nx-b+\frac1\tau
            \sum_{i=1}^{n-1}w_i^{k+1}
        \right\|^2
    \right\},
    \\
    y_n^{k+1}
    &=
    \frac1{n-1}\sum_{i=1}^{n-1}w_i^{k+1}
    +
    \frac{\tau}{n-1}
    \left(
        A_nx_n^{k+1}-b
    \right),
    \\
    s_i^{k+1}
    &=
    s_i^k
    -(1-\theta_{k+1})y_n^k
    +y_n^{k+1}
    -\theta_{k+1} y_i^{k+1},
    \quad i=1,\ldots,n-1.
\end{aligned}
\right.
\end{equation}
Taking $\theta_k=2\kappa$ for all $k\geq1$, with
$\kappa\in(0,1)$, and setting
\[
    \gamma=\delta=\tau,
    \quad
    \lambda=\mu=2,
\]
we have $\theta_k=\kappa\lambda$, while $(\lambda-1)(\mu-1)=1$ and $\delta=\gamma(\lambda-1)$. Thus, up to a change of notation, \cite[Algorithm~2]{alcantaraADMM} coincides with \cref{eq:ADT-final}.
\end{example}

\subsection{A complete-graph multiblock ADMM}
\label{subsec:complete-ADMM}

In the previous subsection we showed that several existing multiblock ADMM schemes can be recovered from particular choices of graphs. We now illustrate how the same framework can be used to design a new method. To this end, we take both graphs to be complete, i.e., every pair of nodes is connected, which yields the densest possible dependency pattern among the blocks. More precisely,
\[G=G'=K_n:=(\mathcal N,\mathcal E), \quad \text{with }\mathcal E=\{(j,i):1\leq j<i\leq n\}.\]
In particular, $d_i=n-1$, for all $i=1,\ldots,n$. We use the onto decomposition $ZZ^*=\Lap(K_n)$ given in \cite[Fact~2.17(iii)]{graph-linear}, where $Z\in\R^{n\times(n-1)}$ is defined componentwise by
\begin{equation}
\label{eq:complete-Z}
    Z_{i,j}
    =
    \begin{cases}
        (n-j)t_j, & \text{if } i=j,\\
        -t_j,     & \text{if } i>j,\\
        0,        & \text{otherwise},
    \end{cases}
    \quad
    \text{with } \;t_j:=\sqrt{\frac{n}{(n-j)(n-j+1)}}.
\end{equation}
To avoid the square roots appearing in \cref{eq:complete-Z}, we rescale the governing variables as
\begin{equation}
\label{eq:complete-rescale}
    \widehat w_j^k:=t_jw_j^k,
    \quad \forall j=1,\ldots,n-1,
\end{equation}
and set $\widehat w_n^k:=0$ for convenience. Using this notation, together with the structure of the complete graph, the auxiliary update in \cref{alg:graphADMM} becomes
\begin{equation}
\label{eq:complete-u}
    u_i^{k+1}
    =
    (n-i)\widehat w_i^k
    -
    \sum_{j=1}^{i-1}\widehat w_j^k
    +
    2\sum_{j=1}^{i-1}y_j^{k+1},
    \quad \forall i=1,\ldots,n.
\end{equation}
On the other hand, multiplying the governing update in \cref{alg:graphADMM} by $t_j$, and using \(t_j^2 = \frac{n}{(n-j)(n-j+1)}\), we obtain
\begin{equation}
\label{eq:complete-w-hat}
    \widehat w_j^{k+1}
    =
    \widehat w_j^k
    -
    \theta_{k+1}
    \frac{n}{(n-j)(n-j+1)}
    \left(
        (n-j)y_j^{k+1}
        -
        \sum_{i=j+1}^ny_i^{k+1}
    \right),
    \quad \forall j=1,\ldots,n-1.
\end{equation}
Therefore, the complete-graph instance of \cref{alg:graphADMM} takes the following explicit form stated in \cref{alg:complete-ADMM}.

\begin{algorithm}[ht!]
    \caption{Graph-based ADMM for the complete graph}
    \label{alg:complete-ADMM}
    \begin{algorithmic}[1]\setlength{\itemsep}{3pt}

        \State \textbf{input:}
        problem data $(F_i,A_i)_{i=1}^n$ and $b$ of \cref{primal},
        a decomposition $b=\sum_{i=1}^n b_i$, a parameter $\tau>0$,
        and a sequence of relaxation parameters
        $(\theta_k)_{k\geq1}\subset{]\epsilon,2-\epsilon[}$ for some
        $\epsilon\in]0,1[$

        \State \textbf{initialize:}
        $\widehat w_1^0,\ldots,\widehat w_{n-1}^0\in\Gi$

        \For{$k=0,1,2,\ldots$}

            \For{$i=1,\ldots,n$}

                \State
                \(
                    u_i^{k+1}
                    =
                    (n-i)\widehat w_i^k
                    -
                    \sum_{j=1}^{i-1}\widehat w_j^k
                    +
                    2\sum_{j=1}^{i-1}y_j^{k+1}
                \)

                \State
                \(
                    x_i^{k+1}
                    \in
                    \left(
                        F_i+\frac{\tau}{n-1}A_i^*A_i
                    \right)^{-1}
                    \left(
                        -\frac1{n-1}
                        A_i^*(u_i^{k+1}-\tau b_i)
                    \right)
                \)
                \algcomment{primal sequence}

                \State
                \(
                    r_i^{k+1}
                    :=
                    A_ix_i^{k+1}-b_i
                \)
                \algcomment{residual sequence}

                \State
                \(
                    y_i^{k+1}
                    :=
                    \frac1{n-1}
                    \left(
                        u_i^{k+1}+\tau r_i^{k+1}
                    \right)
                \)
                \algcomment{dual sequence}

            \EndFor

            \For{$j=1,\ldots,n-1$}

                \State
                \(
                    \widehat w_j^{k+1}
                    =
                    \widehat w_j^k
                    -
                    \theta_{k+1}
                    \frac{n}{(n-j)(n-j+1)}
                    \left(
                        (n-j)y_j^{k+1}
                        -
                        \sum_{i=j+1}^ny_i^{k+1}
                    \right)
                \)
                \algcomment{governing sequence}

            \EndFor

        \EndFor

    \end{algorithmic}
\end{algorithm}

\begin{remark}
Some comments regarding \cref{alg:complete-ADMM} are in order.
\begin{enumerate}[label={\rm(\roman*)}]
\item The auxiliary definition $\widehat w_n^k:=0$ is introduced only for notational convenience, so that the case $i=n$ can be included in \cref{eq:complete-u}. In fact, $\widehat w_n^k$ need not be defined or stored, since it is always multiplied by the factor $n-i$ (which is zero for $i=n$). Thus, the governing sequence still consists of only $n-1$ blocks.

\item Although the sums in \cref{alg:complete-ADMM} are written explicitly for clarity, they can be evaluated recursively with linear cost. Indeed, introducing
\[C_1^{k+1}:=0, \quad\text{and}\quad C_{i+1}^{k+1}:=C_i^{k+1} - \widehat w_i^k + 2y_i^{k+1},\]
the $u_i^{k+1}$-updates reduce to \( u_i^{k+1} =(n-i)\widehat w_i^k+C_i^{k+1}\), for $i=1,\ldots,n$. A similar trick allows the sums in the governing updates to be computed recursively, so that all graph-dependent operations require only a linear number of vector additions and scalar multiplications.

\item The primal, residual, and dual sequences generated by \cref{alg:complete-ADMM} coincide exactly with those of \cref{alg:graphADMM}, while $(\widehat {\bm w}^k)_{k\geq 0}$ is merely a rescaling of its governing sequence $({\bm w}^k)_{k\geq 0}$   given by \cref{eq:complete-rescale}. Therefore, all the convergence results established in \cref{sec:graph-admm} for \cref{alg:graphADMM} apply directly to \cref{alg:complete-ADMM}.
\end{enumerate}
\end{remark}

\section{A (reduced) primal-dual reformulation}
\label{sec:multiplier-only}

Conventionally, classical ADMM is presented as an algorithm involving three variables: two blocks of primal variables $(x_1^k,x_2^k)$ and a single vector $y^k$, usually interpreted as a dual variable due to its characterization as a Lagrange multiplier. By contrast, our dual construction naturally gives rise to an additional governing sequence $\bm w^k$, which is absent in the classical approach.

The explicit expression of the governing sequence in the representation of graph splitting methods is important, not only for unpacking algorithmic duality, but also for applying algorithmic extrapolation schemes that exploit the dynamics of the governing variable \cite{calcan2024admm,lindstrom2022computable,poon2019trajectory}. Nevertheless, the derivation of classical ADMM in \cref{ex:classADMM} showed that this sequence can be eliminated. Moreover, after doing so, one of the two dual shadows becomes redundant, leaving precisely the usual representation in terms of two primal blocks and a single dual variable.

The purpose of this section is to extend this elimination procedure to the general graph setting. More precisely, we eliminate the governing sequence $\bm w^k$ and one dual shadow as independent variables, obtaining an equivalent algorithm expressed in terms of the $n$ primal blocks in $\bm x^k$ and $n-1$ dual shadow blocks, namely the first $n-1$ components of $\bm y^k$. The remaining dual is determined algebraically whenever it is required.

A further consequence of eliminating the governing sequence is that the particular onto decomposition $Z$ is no longer required. Indeed, $Z$ enters the elimination only through $ZZ^*=\operatorname{Lap}(G')$, whose action is completely determined by the graph structure of $G'$. Hence, before stating the primal-dual reformulation in \cref{alg:multiplier-only}, we introduce the following notation associated with the subgraph $G'$. For each $i=1,\ldots,n$, we denote by
\[ \mathcal N_i' := \{h<i:(h,i)\in\mathcal E'\} \cup \{h>i:(i,h)\in\mathcal E'\}\]
the set of neighbors of node $i$ in $G'$, and by \(d_i':=|\mathcal N_i'|\) its degree in this subgraph.

\begin{algorithm}[ht!]
    \caption{Graph-based family of multiblock ADMM (primal--dual form)}
    \label{alg:multiplier-only}
    \begin{algorithmic}[1]\setlength{\itemsep}{3pt}

		\State \textbf{input:}
		problem data $(F_i,A_i)_{i=1}^n$ and $b$ of \cref{primal},
		a decomposition $b=\sum_{i=1}^n b_i$,
		an algorithmic graph $G=(\mathcal N,\mathcal E)$ with degrees
		$(d_i)_{i=1}^n$ and degree balances
		$(\delta_i)_{i=1}^n$,
		a connected spanning subgraph
		$G'=(\mathcal N,\mathcal E')$ with neighbor sets $(\mathcal N_i')_{i=1}^n$
		and  degrees $(d_i')_{i=1}^n$,
		a parameter $\tau>0$, and a sequence of relaxation parameters
		$(\theta_k)_{k\geq0}\subset{]\epsilon,2-\epsilon[}$ for some
		$\epsilon\in]0,1[$

        \State \textbf{initialize:}
        $x_1^0\in\Hi_1,\ldots,x_n^0\in\Hi_n$ and
        $y_1^0,\ldots,y_{n-1}^0\in\Gi$, and set
        \(r_i^0:=A_ix_i^0-b_i,\, i=1,\ldots,n\).

        \For{$k=0,1,2,\ldots$}

            \State
            \(
                y_n^k
                :=
                \frac{\tau}{d_n}\sum_{i=1}^n r_i^k
                +
                \frac1{d_n}
                \sum_{i=1}^{n-1}\delta_i y_i^k
            \)\algcomment{complete dual state}

            \For{$i=1,\ldots,n$}

                \State
				\(
				\begin{aligned}
				    u_i^{k+1}
				    ={}&
				    \underbrace{
				        d_i y_i^k
				        +
				        2\sum_{(h,i)\in\mathcal E}
				        \left(y_h^{k+1}-y_h^k\right)
				    }_{G}
				    -
				        \theta_{k}
				     \underbrace{   \left(
				            d_i' y_i^k
				            -
				            \sum_{h\in\mathcal N_i'}y_h^k
				        \right)
				    }_{G'}
				    -
				    \tau r_i^k.
				\end{aligned}
				\)

                \State
                \(
                    x_i^{k+1}
                    \in
                    \left(
                        F_i+\frac{\tau}{d_i}A_i^*A_i
                    \right)^{-1}
                    \left(
                        -\frac1{d_i}
                        A_i^*(u_i^{k+1}-\tau b_i)
                    \right)
                \)
                \algcomment{primal sequence}

                \State
                \(
                    r_i^{k+1}
                    :=
                    A_ix_i^{k+1}-b_i
                \)
                \algcomment{residual sequence}

                \If{$i<n$}
                    \State
                    \(
                        y_i^{k+1}
                        :=
                        \frac1{d_i}
                        \left(
                            u_i^{k+1}+\tau r_i^{k+1}
                        \right)
                    \)
                    \algcomment{reduced dual sequence}
                \EndIf

            \EndFor

        \EndFor

    \end{algorithmic}
\end{algorithm}

\begin{remark}
Although $y_n^k$ is explicitly displayed in \cref{alg:multiplier-only}, it is not an independent algorithmic variable. Indeed, at every iteration it is completely determined by the primal residuals and the first $n-1$ dual shadows. Thus, the actual state of the method consists only of
\[
    \bm x^k:=(x_1^k,\ldots,x_n^k)\in\bHi
    \quad\text{and}\quad
    \widetilde{\bm y}^k
    :=
    (y_1^k,\ldots,y_{n-1}^k)\in\Gi^{n-1}.
\]
The quantity $y_n^k$ is written explicitly in the algorithm merely for notational convenience, since it allows the updates to be expressed uniformly in terms of the full dual vector $\bm y^k$. Equivalently, one could substitute the expression for $y_n^k$ wherever it appears, thereby obtaining an iteration involving only $(\bm x^k,\widetilde{\bm y}^k)$, at the expense of more cumbersome formulas.
\end{remark}

\begin{theorem}
\label{thm:multiplier-only}
Let $G=(\mathcal N,\mathcal E)$ be an algorithmic graph and $G'=(\mathcal N,\mathcal E')$ a connected spanning subgraph of $G$. Consider \cref{alg:multiplier-only} initialized with $\bm x^0:=(x_1^0,\ldots,x_n^0)\in\bHi$ and $\widetilde{\bm y}^0:=(y_1^0,\ldots,y_{n-1}^0)\in\Gi^{n-1}$. Set \(r_i^0:=A_ix_i^0-b_i\), for each $i=1,\ldots,n$, and let
\begin{equation}\label{eq:pd-yn0}
y_n^0 := \frac{\tau}{d_n}\sum_{i=1}^n r_i^0 +\frac1{d_n}\sum_{i=1}^{n-1}\delta_i y_i^0.
\end{equation}
Let $Z\in\R^{n\times(n-1)}$ with $ZZ^*=\operatorname{Lap}(G')$ be any onto decomposition of the Laplacian of $G'$. Then there exists a unique $\bm w^0\in\Gi^{n-1}$ satisfying
\begin{equation}
\label{eq:pd-compatible-initialization}
\begin{aligned}
    (Z\bm w^0)_i
    ={}&
    d_i y_i^0
    -
    2\sum_{(h,i)\in\mathcal E}y_h^0
    -
    \theta_0
    \left(
        d_i'y_i^0
        -
        \sum_{h\in\mathcal N_i'}y_h^0
    \right)
    -
    \tau r_i^0,
    \quad \forall i=1,\ldots,n.
\end{aligned}
\end{equation}
If \cref{alg:graphADMM}, executed with graph setting $(G,G',Z)$, is initialized from this $\bm w^0$, then both algorithms generate the same primal $(\bm x^k)_{k\geq1}$ and dual sequences $(\bm y^k)_{k\geq1}$.
\end{theorem}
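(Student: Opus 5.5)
The plan is to prove the claim in two stages: first the existence and uniqueness of $\bm w^0$, then an induction on $k$ showing that a single linear invariant linking $Z\bm w^k$ to the state $(\bm x^k,\bm y^k)$ of \cref{alg:multiplier-only} is preserved. For brevity, write $R_i^k$ for the right-hand side of \cref{eq:pd-compatible-initialization} with $0$ replaced by $k$ (so $\theta_0$ becomes $\theta_k$, and $y^0,r^0$ become $y^k,r^k$). Note that $d_i'y_i^k-\sum_{h\in\mathcal N_i'}y_h^k=(\operatorname{Lap}(G')\bm y^k)_i$.

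\textbf{Existence and uniqueness of $\bm w^0$.} By \cref{fact: Zdecom}, $\ker Z=\{0\}$ and $\ker Z^*=\operatorname{span}\{\bm 1_n\}$. Lifted to $\Gi$, $Z$ is therefore injective with closed range $\ran Z=(\ker Z^*)^\perp=\{\bm v\in\Gi^n:\bm 1_n^*\bm v=0\}$; the range is closed because $Z$ is a finite matrix of full column rank. So it suffices to check $\bm 1_n^*\bm R^0=0$, and I would compute the three pieces separately.
\begin{itemize}
\item The Laplacian term sums to zero, since $\bm 1_n^*\operatorname{Lap}(G')=0$.
\item As in the proof of \cref{thm:convergence}\ref{thm:convergence_r}, $\sum_i d_iy_i^0-2\sum_i\sum_{(h,i)\in\mathcal E}y_h^0=\sum_i(d_i-2d_i^{\rm out})y_i^0=-\bm\delta^*\bm y^0$.
\item The residual term contributes $-\tau\sum_i r_i^0$.
\end{itemize}
Hence the condition to verify is $-\bm\delta^*\bm y^0=\tau\sum_i r_i^0$. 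Since $G$ is algorithmic, node $n$ has no outgoing edges, so $\delta_n=-d_n$. The condition then reads $d_ny_n^0=\tau\sum_ir_i^0+\sum_{i<n}\delta_iy_i^0$, which is exactly the definition \cref{eq:pd-yn0}. This gives a unique $\bm w^0$.

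\textbf{Inductive invariant.} I will prove by induction that, for all $k\ge0$, the two algorithms share $\bm x^k$ and $\bm y^k$ (for $k=0$, with $y_n^0$ from \cref{eq:pd-yn0}) and that $Z\bm w^k=\bm R^k$. The base case is the definition of $\bm w^0$.

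For the step, assume the invariant at $k$. In \cref{alg:graphADMM}, $u_i^{k+1}=(Z\bm w^k)_i+2\sum_{(h,i)\in\mathcal E}y_h^{k+1}$. Substituting $R_i^k$ gives exactly the $u_i^{k+1}$ of \cref{alg:multiplier-only}; the terms $-2\sum y_h^k$ and $+2\sum y_h^{k+1}$ recombine into $2\sum(y_h^{k+1}-y_h^k)$. This is consistent sequentially in $i$, because both algorithms use only $y_h^{k+1}$ with $h<i$, which agree by an inner induction on $i$. Consequently $x_i^{k+1}$ agrees; if step 6 is multivalued, choose the same selection, and by \cref{rem:uniq} the residuals are unique in any case. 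The same then holds for $r_i^{k+1}$ and for $y_i^{k+1}$ with $i<n$. For $i=n$, \cref{eq:residual-identity} in \cref{alg:graphADMM} gives $\tau\bm 1_n^*\bm r^{k+1}=-\bm\delta^*\bm y^{k+1}$. Using $\delta_n=-d_n$ again, this yields $y_n^{k+1}=\frac{\tau}{d_n}\sum_ir_i^{k+1}+\frac1{d_n}\sum_{i<n}\delta_iy_i^{k+1}$, which is the ``complete dual state'' formula of \cref{alg:multiplier-only} at iteration $k+1$.

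It remains to propagate the invariant. The $u$- and $y$-updates of \cref{alg:graphADMM} give $(Z\bm w^k)_i=d_iy_i^{k+1}-2\sum_{(h,i)\in\mathcal E}y_h^{k+1}-\tau r_i^{k+1}$. Then $Z\bm w^{k+1}=Z\bm w^k-\theta_{k+1}ZZ^*\bm y^{k+1}=Z\bm w^k-\theta_{k+1}\operatorname{Lap}(G')\bm y^{k+1}$, and this equals $\bm R^{k+1}$. The step I expect to be most delicate is bookkeeping rather than depth. Two points need care: the relaxation index must match, with \cref{alg:multiplier-only} using $\theta_k$ at step $k$ and \cref{alg:graphADMM} using $\theta_{k+1}$ for $\bm w^{k+1}$, which is consistent with the invariant carrying $\theta_k$; and the eliminated component $y_n$ must be identified correctly at every stage through $\delta_n=-d_n$.
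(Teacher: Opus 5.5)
Your proposal is correct and follows essentially the same route as the paper. You prove existence and uniqueness of $\bm w^0$ by checking that the right-hand side sums to zero (via $\delta_n=-d_n$ and the definition of $y_n^0$), derive the invariant $Z\bm w^k=\bm R^k$ from $ZZ^*=\operatorname{Lap}(G')$, substitute it into the $u$-update, and recover $y_n^{k+1}$ from the residual identity; your explicit induction on $k$ and the remark about choosing the same selection in step~6 simply make rigorous what the paper's argument leaves implicit.
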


\begin{proof}
We first show the existence of $\bm w^0\in\Gi^{n-1}$ satisfying \cref{eq:pd-compatible-initialization}. To this end, define $\bm q^0\in\Gi^n$ componentwise by
\[
    q_i^0
    :=
    d_i y_i^0
    -
    2\sum_{(h,i)\in\mathcal E}y_h^0
    -
    \theta_0
    \left(
        d_i'y_i^0
        -
        \sum_{h\in\mathcal N_i'}y_h^0
    \right)
    -
    \tau r_i^0,\quad \forall i=1,\ldots,n.
\]
By counting the contributions induced by each edge of $G'$ and $G$, respectively, we have
\begin{subequations}
\begin{align}
    & \textstyle\sum_{i=1}^n
    \left(
        d_i'y_i^k
        -
        \sum_{h\in\mathcal N_i'}y_h^k
    \right)
    =0,\\
    &\textstyle\sum_{i=1}^n
    \sum_{(h,i)\in\mathcal E}y_h^k
    =
    \sum_{i=1}^n d_i^{\rm out}y_i^k,\label{eq:pd-aux_dout}
\end{align}
\end{subequations}
for all $k\geq 0$. In particular,
\begin{equation}\label{eq:pd-sumq0}
    \sum_{i=1}^n q_i^0
    =
    \sum_{i=1}^n
    \left(d_i-2d_i^{\rm out}\right)y_i^0
    -
    \tau\sum_{i=1}^n r_i^0
    =
    -\sum_{i=1}^n\delta_i y_i^0
    -
    \tau\sum_{i=1}^n r_i^0.
\end{equation}
Since $G$ is algorithmic, $\delta_n=d_n^{\rm out}-d_n^{\rm in}=0-d_n=-d_n$. Therefore, from \cref{eq:pd-yn0} we get
\[ \sum_{i=1}^n\delta_i y_i^0 = -\tau\sum_{i=1}^n r_i^0,\]
which combined with \cref{eq:pd-sumq0} gives $\sum_{i=1}^n q_i^0=0$. Then, by \cite[Proposition 2.18]{graph-linear}, there exists $\bm w^0\in\Gi^{n-1}$ such that \( Z\bm w^0=\bm q^0\). Furthermore, $Z$ is injective, so $\bm w^0$ is unique.

We next derive the relation between the governing sequence and the primal--dual variables. For every $k\geq1$, combining the auxiliary and the dual updates in steps 5 and 8 of \cref{alg:graphADMM}, respectively, we obtain
\begin{equation}\label{eq:pd-auxdual-relation}
    (Z\bm w^{k-1})_i
    =
    d_i y_i^k
    -
    2\sum_{(h,i)\in\mathcal E}y_h^k
    -
    \tau r_i^k,\quad \forall i=1,\ldots,n.
\end{equation}
On the other hand, the governing update reads as \(\bm w^k = \bm w^{k-1} - \theta_k Z^*\bm y^k\). Multiplying by $Z$ and using $ZZ^*=\operatorname{Lap}(G')$ and \cref{eq:pd-auxdual-relation}, we obtain
\begin{equation}
\label{eq:pd-governing-relation}
\begin{aligned}
    (Z\bm w^k)_i
    ={}&
    d_i y_i^k
    -
    2\sum_{(h,i)\in\mathcal E}y_h^k
    -
    \tau r_i^k
    -
    \theta_k
    \big(\operatorname{Lap}(G')\bm y^k\big)_i
    \\
    ={}&
    d_i y_i^k
    -
    2\sum_{(h,i)\in\mathcal E}y_h^k
    -
    \theta_k
    \left(
        d_i'y_i^k
        -
        \sum_{h\in\mathcal N_i'}y_h^k
    \right)
    -
    \tau r_i^k,
\end{aligned}
\end{equation}
for all $k\geq1$ and $i=1,\ldots,n$. In fact, by \cref{eq:pd-compatible-initialization}, the same relation holds for $k=0$.

We can now compare the updates of the two algorithms. Using \cref{eq:pd-governing-relation}, the auxiliary update in \cref{alg:multiplier-only} becomes
\[
\begin{aligned}
    u_i^{k+1}
    ={}&    
        d_i y_i^k
        +
        2\sum_{(h,i)\in\mathcal E}
        \left(y_h^{k+1}-y_h^k\right)
        -
        \theta_k
        \left(
            d_i'y_i^k
            -
            \sum_{h\in\mathcal N_i'}y_h^k
        \right)
    -
    \tau r_i^k
    =
    (Z\bm w^k)_i
    +
    2\sum_{(h,i)\in\mathcal E}y_h^{k+1},
\end{aligned}
\]
which is precisely the auxiliary update of \cref{alg:graphADMM}. Since the nodes are processed in the same order in both algorithms and every $(h,i)\in\mathcal E$ satisfies $h<i$, the corresponding primal, residual, and first $n-1$ dual updates therefore coincide.

It remains to verify that the value of the last dual shadow recovered in step 4 of \cref{alg:multiplier-only} coincides with the one generated by \cref{alg:graphADMM}. Rearranging \cref{eq:pd-auxdual-relation} we have
\[
    d_i y_i^{k+1}
    =
    (Z\bm w^k)_i
    +
    2\sum_{(h,i)\in\mathcal E}y_h^{k+1}
    +
    \tau r_i^{k+1},\quad \forall i=1,\ldots,n.
\]
Summing these identities and using $\bm1_n^*Z=0$ and \cref{eq:pd-aux_dout}, we obtain
\[
    \sum_{i=1}^n
    \left(d_i-2d_i^{\rm out}\right)y_i^{k+1}
    =
    \tau\sum_{i=1}^n r_i^{k+1}.
\]
Equivalently, \(\sum_{i=1}^n\delta_i y_i^{k+1} = -\tau\sum_{i=1}^n r_i^{k+1}\). Since $\delta_n=-d_n$, this yields
\[
    y_n^{k+1}
    =
    \frac{\tau}{d_n}
    \sum_{i=1}^n r_i^{k+1}
    +
    \frac1{d_n}
    \sum_{i=1}^{n-1}\delta_i y_i^{k+1},
\]
which is exactly the recovery formula in \cref{alg:multiplier-only}.

In summary, starting from the compatible initialization \cref{eq:pd-compatible-initialization}, all the updates of the two algorithms coincide, and hence they generate the same primal and dual sequences.
\end{proof}

Besides providing a primal--dual representation of the graph-based
ADMM family, this formulation clarifies its connection with classical
ADMM (see \cref{rem:classical-multiplier-only}). This form for classical ADMM is often preferable because the role of the shadow variable as a Lagrange multiplier is apparent from the augmented Lagrangian expression. In the general
multiblock setting, however, the interpretation of the individual shadow
variables as Lagrange multipliers is less immediate.

\begin{remark}[The classical ADMM for $n=2$ revisited]
\label{rem:classical-multiplier-only}

The construction of \cref{alg:multiplier-only} can be viewed as a
generalization of the elimination procedure used to recover the
classical ADMM in \cref{ex:classADMM}. Indeed, the unique
algorithmic graph of order two satisfies
\[
    d_1=d_2=d_1'=d_2'=1,
    \quad
    \delta_1=1
    \quad\text{and}\quad
    \delta_2=-1.
\]
Take $\theta_k\equiv 1$ and recall that we apply the algorithms to the primal data in reverse order and retain the original block indices; that is, the first graph node corresponds
to block $(f_2,A_2,b)$ and the second one to $(f_1,A_1,0)$. Hence, the only dual
shadow retained by \cref{alg:multiplier-only} is $y_2^k$.

The recovery formula for the eliminated dual gives
\(
    y_1^k
    =
    y_2^k
    +
    \tau
    \left(
        A_1x_1^k+A_2x_2^k-b
    \right).
\)
Consequently, $u_2^{k+1}=y_2^k+\tau A_1x_1^k$ and
\[
    x_2^{k+1}
    \in
    \operatorname*{argmin}_{x_2\in\Hi_2}
    \left\{
        f_2(x_2)
        +
        \frac{\tau}{2}
        \left\|
            A_1x_1^k+A_2x_2-b+\frac{y_2^k}{\tau}
        \right\|^2
    \right\}.
\]
Moreover,
\(
    y_2^{k+1}
    =
    y_2^k
    +
    \tau
    \left(
        A_1x_1^k+A_2x_2^{k+1}-b
    \right),
\)
so for the second block
\(
    u_1^{k+1}
    =
    y_2^k
    +
    2(y_2^{k+1}-y_2^k)
    -
    \tau A_1x_1^k=
    y_2^{k+1}
    +
    \tau(A_2x_2^{k+1}-b),
\)
which yields
\[
    x_1^{k+1}
    \in
    \operatorname*{argmin}_{x_1\in\Hi_1}
    \left\{
        f_1(x_1)
        +
        \frac{\tau}{2}
        \left\|
            A_1x_1+A_2x_2^{k+1}-b
            +\frac{y_2^{k+1}}{\tau}
        \right\|^2
    \right\}.
\]
Thus, \cref{alg:multiplier-only} reproduces exactly the two elimination steps already used in \cref{ex:classADMM} to obtain \cref{eq:classical-admm-cyclic}: first, the governing sequence is removed; second, one dual shadow ceases to be an independent variable. Hence the iteration reduces to the two primal blocks and a single dual shadow, namely $y_2^k$, which is precisely the classical ADMM multiplier.
\end{remark}

\section{Numerical experiments}
\label{sec:numerics}

In this section, we numerically compare the behavior of the
graph-based ADMM schemes for different graph configurations.
All algorithms were implemented in Python~3.9 and run on a Mac mini with Apple M4 CPU, with 24 GB RAM, under macOS Tahoe 26.6.2. The code for these experiments is available at \url{https://github.com/OPT-UA/Graph-ADMM}.

\subsection{Common settings}\label{subsec:common}

\def\n{6}
\def\m{100}

\paragraph{The problem}
As in \cref{sec:convex-minimization}, the ambient spaces are finite-dimensional, with $\Hi_i=\R^{\ell_i}$ for $i=1,\ldots,n$ and $\Gi=\R^m$. Throughout, we set $n=\n$ and $m=\m$. We consider linearly constrained quadratic problems of the form
\begin{equation}\tag{PE1}\label{experiment1}
    \min_{x_1\in\R^{\ell_1},\ldots,x_\n\in\R^{\ell_\n}}
    \sum_{i=1}^\n \frac12\|Q_ix_i\|^2
    \quad\text{subject to}\quad
    \sum_{i=1}^\n A_ix_i=b,
\end{equation}
where $Q_i\in\R^{p_i\times\ell_i}$ and $A_i\in\R^{\m\times\ell_i}$, for all $i=1,\ldots,\n$.

We concern on quadratic problems as they provide a flexible setting in which to compare different graph-based schemes under a common problem structure. By varying the dimensions and the matrices $Q_i$ and $A_i$, one can generate a broad range of test instances while retaining a simple and uniform implementation. In particular, each primal update $x_i^{k+1}$ reduces to solving the linear system
\begin{equation}
\label{eq:quadratic-primal-update}
    \left(d_iQ_i^*Q_i+\tau A_i^*A_i\right)x_i^{k+1}
    =
    A_i^*(\tau b_i-u_i^{k+1}).
\end{equation}
Given the efficiency of current solvers, this ensures that differences in performance can be attributed to the graph topology.

\paragraph{Parameter setting}
For each problem, we randomly choose $\ell_i\sim U\{1,\dots,10\}$ for all $i=1,\ldots,\n$. Given these dimensions, we then randomly select $p_i\sim U\{\ell_i,\ldots,10\}$ and the matrices $A_i\sim U[0,1]^{\m\times\ell_i}$ and $Q_i\sim U[0,1]^{p_i\times\ell_i}$ for all $i=1,\ldots,\n$.

The test problems are generated so that the assumptions of
\cref{cor:convergence-convex} are automatically satisfied.
More precisely, every matrix $Q_i$ is chosen to have full column
rank so that strong convexity holds and \cref{B2}\ref{B2b} holds. Moreover, since quadratic functions have full domain, the relative-interior condition in \cref{B1}\ref{B1b} reduces to feasibility. To guarantee this, for each instance we first
generate a point
$\widehat{\bm x}=(\widehat x_1,\ldots,\widehat x_n)$, where $\widehat x_i\sim U[0,1]^{\ell_i}$,
and then define
\[
    b:=\sum_{i=1}^n A_i\widehat x_i.
\]
Finally, we choose the decomposition $b_i=b/\n$ for all $i=1,\ldots,\n$. Therefore, all the assumptions of
\cref{cor:convergence-convex} (including the one required for the convergence of the primal sequence) are satisfied. Furthermore, since the
objective is strongly convex, \cref{experiment1} has a unique
solution, and so does the linear system \cref{eq:quadratic-primal-update}.

\paragraph{Stopping criterion}
To define an adequate stopping criterion, we use the primal--dual optimality
conditions associated with \cref{experiment1}. A primal--dual
solution $(\bm x^\star,y^\star)$ satisfies
\begin{equation}
\label{eq:quadratic-KKT}
    \sum_{i=1}^n A_ix_i^\star-b=0,
    \quad\text{and}\quad
    Q_i^*Q_ix_i^\star+A_i^*y^\star=0,
    \quad \forall i=1,\ldots,\n.
\end{equation}
By the optimality condition of the primal subproblem (step~6 of \cref{alg:graphADMM-convex}), we have
\[
    Q_i^*Q_ix_i^{k+1}+A_i^*y_i^{k+1}=0,
    \quad \forall i=1,\ldots,n,
\]
at every iteration. Thus, each block already satisfies its
stationarity condition with respect to its corresponding dual variable. Convergence to a primal--dual solution therefore requires
primal feasibility together with dual consensus. Accordingly, we define the primal and dual
residuals as
\begin{subequations}
\begin{align}
    R_{\mathrm p}^{k+1}
    &:=
    \left\|
        \textstyle\sum_{i=1}^nA_ix_i^{k+1}-b
    \right\|,
    \label{eq:numerical-primal-residual}\\
    R_{\mathrm d}^{k+1}
    &:=
    \left\|
        \bm y^{k+1}
        -\bm1_n\overline y^{k+1}
    \right\|,
    \label{eq:numerical-dual-residual}
\end{align}
\end{subequations}
where $\bm y^{k+1}:=(y_1^{k+1},\ldots,y_n^{k+1})$ and $\overline y^{k+1}:=\frac{1}{n}\sum_{i=1}^n y_i^{k+1}$.
Under the assumptions imposed above,
\cref{thm:convergence}\ref{thm:convergence_r} and
\cref{lem:equiv}\ref{lem:equiv_ybar} yield
\(\left(R_{\mathrm p}^{k}\right)^2=o(1/k)\) and  \(\left(R_{\mathrm d}^{k}\right)^2=o(1/k)\),
so both residuals vanish asymptotically. We therefore terminate the
iterations whenever
\begin{equation}
\label{eq:numerical-stopping}
    \max\left\{
        R_{\mathrm p}^{k+1},
        R_{\mathrm d}^{k+1}
    \right\}
    <\epsilon,
\end{equation}
where $\epsilon>0$ is a tolerance specified in each context.

\paragraph{Considered algorithms}
We compare six graph configurations. Since all the graphs involved
have the same node set $\mathcal N=\{1,\ldots,\n\}$, it suffices to
specify their edge sets. The pairs $(\mathcal E,\mathcal E')$
defining the methods under comparison are listed in
\cref{tab:graph-methods}.

\begin{table}[H]
    \centering\renewcommand{\arraystretch}{1.2}
    \begin{tabular}{c|c|c}
        Method & $\mathcal E$ & $\mathcal E'$ \\
        \hline
        Malitsky--Tam
        &
        $\{(i,i+1)\}_{i=1}^{n-1}\cup\{(1,n)\}$
        &
        $\{(i,i+1)\}_{i=1}^{n-1}$
        \\
        Sequential
        &
        $\{(i,i+1)\}_{i=1}^{n-1}$
        &
        $\{(i,i+1)\}_{i=1}^{n-1}$
        \\
        Alcantara--Dao--Takeda
        &
        $\{(1,i)\}_{i=2}^{n}$
        &
        $\{(1,i)\}_{i=2}^{n}$
        \\
        Parallel Down
        &
        $\{(i,n)\}_{i=1}^{n-1}$
        &
        $\{(i,n)\}_{i=1}^{n-1}$
        \\
        Complete
        &
        $\{(j,i)\}_{1\leq j<i\leq n}$
        &
        $\{(j,i)\}_{1\leq j<i\leq n}$
        \\
        Generalized Ryu
        &
        $\{(j,i)\}_{1\leq j<i\leq n}$
        &
        $\{(i,n)\}_{i=1}^{n-1}$
        \\
    \end{tabular}
    \caption{Edge-set pairs $(\mathcal E,\mathcal E')$ defining the
    graph configurations of the multiblock ADMM algorithms compared
    in the numerical experiment}
    \label{tab:graph-methods}
\end{table}

\subsection{Choosing optimal stepsize and relaxation}

Our main goal is to compare the performance of \cref{alg:graphADMM-convex} across the selected graph configurations. However, their optimal performance might be achieved at different choices of the stepsize~$\tau$ and relaxation parameter $\theta$. Thus, in order to ensure fairness, we first tune these parameters for each graph setting.

To approximately find the best-performing parameters, we tested every pair $(\tau,\theta)$ in the grid $\{0.4,0.5,\ldots,4\}\times \{1,1.05,\ldots,1.95\}$. Although $\theta$ is theoretically bounded in $(0,2)$, preliminary experiments showed that values of $\theta<1$ were dominated. Thus, each graph configuration was tested over $37\times 20=740$ pairs $(\tau,\theta)$. We generated five problem instances (as described in \cref{subsec:common}) and five independent random initial points, whose components were sampled from $N(0,1)$. Thus, the same $5\times5$ combinations of problems and initial points were tested for every pair of parameters and every method. For this experiment, we set $\epsilon=10^{-4}$ in \cref{eq:numerical-stopping}.

\paragraph{Results}
For each method and pair $(\tau,\theta)$, let $k_{\tau,\theta}$ denote the average number of iterations required to satisfy \eqref{eq:numerical-stopping} among all problems and initializations. The resulting values are displayed in \cref{fig:optimal-quadratics} as a heat map for each method. The $x$-axis represents the stepsize $\tau$ and the $y$-axis the relaxation parameter $\theta$, while the color scale represents the corresponding value of $k_{\tau,\theta}$. The point attaining the smallest value of $k_{\tau,\theta}$ is marked by a red dot.

Regarding the relaxation parameter~$\theta$, we confirm from the results that overrelaxed algorithms are preferable. Those algorithms with $G=G'$ have roughly the same relaxation parameter, between $1.75$ and $1.8$, whereas Malitsky--Tam and Generalized Ryu behave better with values closer to the upper bound. In contrast, the best-performing stepsize $\tau$ is much more variable, highlighting the importance of tuning $\tau$ separately for each method. 

We also observe that Complete achieved the lowest number of iterations at the optimal parameters. In addition, it permits more flexibility in the choice of the pair $(\tau,\theta)$, as it exhibits a wider region of low values of $k_{\tau,\theta}$. Finally, although taking extremely small stepsizes slows down the convergence of all algorithms, this is specially emphasized for Generalized Ryu.

\begin{figure}[h]
    \centering
    \includegraphics[width=\textwidth]{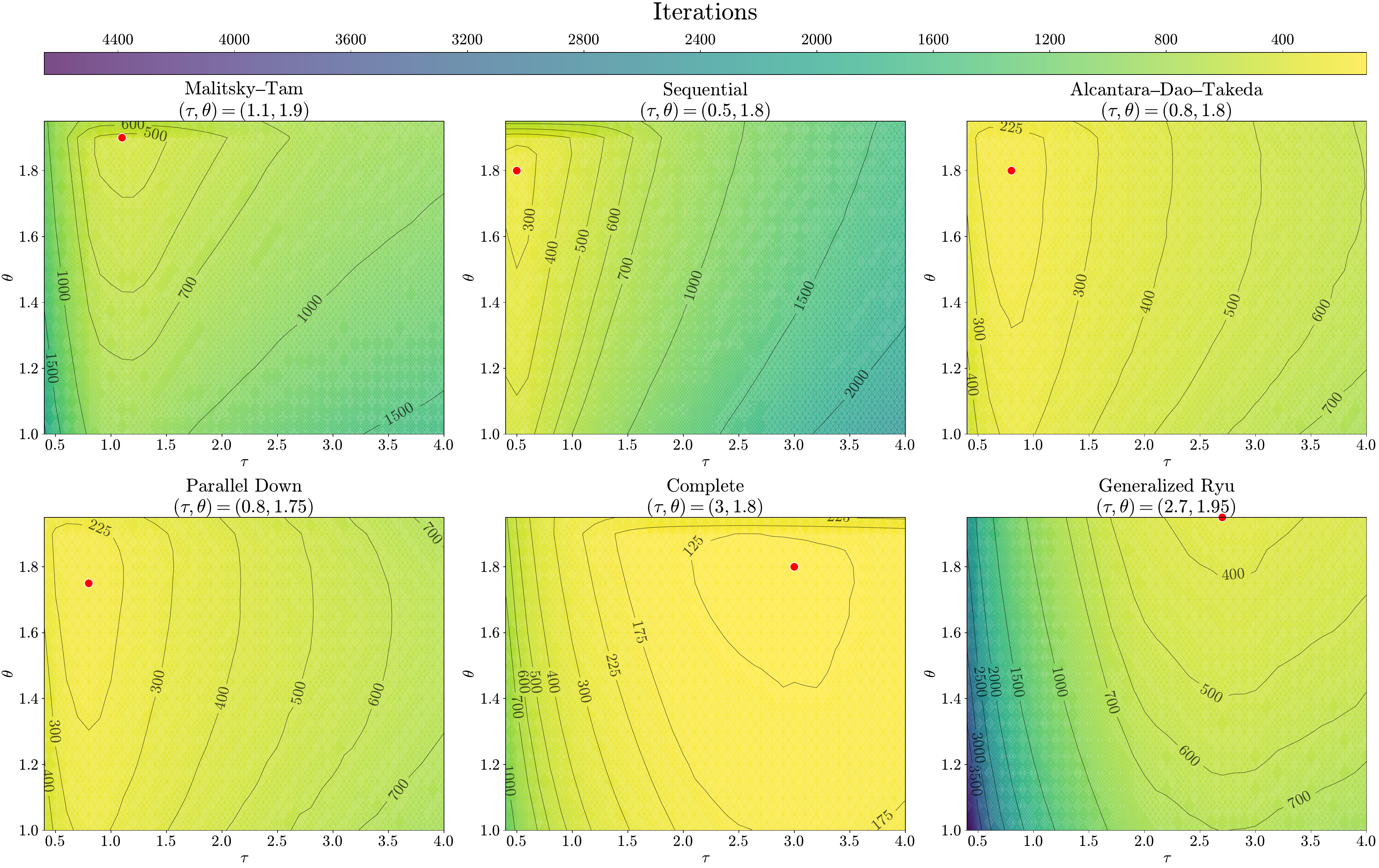}
    \caption{Heat maps of the average number of iterations until the stopping criterion~\cref{eq:numerical-stopping} with $\epsilon=10^{-4}$ is satisfied}
    \label{fig:optimal-quadratics}
\end{figure}

\subsection{Comparing performances}

Now, we are ready to fairly compare the performance of the methods depending on the graph topology. For this experiment, we generated $N=200$ problems following the procedure described in \cref{subsec:common}. Each problem was solved by each of the six methods listed in \cref{tab:graph-methods}, all of them initialized from the starting point $\bm w^0=0$. For each method, the pair of parameters $(\tau,\theta)$ was chosen according to the results in \cref{fig:optimal-quadratics}. The tolerance for the stopping criterion was set to~$\epsilon=10^{-6}$ .

\paragraph{Results} To visualize the results we rely on \emph{performance profiles}, introduced by Dolan and Mor\'e~\cite{performanceprofile}, which are described as follows. For every problem $P$ and method $S$, we record the CPU time $t_{P,S}$ required to satisfy the stopping criterion in \cref{eq:numerical-stopping}. For every problem $P$, we define $t_P^{\min}:=\min_S t_{P,S}$. Then we compute
\begin{equation}\label{eq:performance-profile}
\pi_S(\alpha):=\frac{1}{N}\left|\left\{P:\ t_{P,S}\leq\alpha t_P^{\min}\right\}\right|,\quad \forall \alpha\geq 1.
\end{equation}
Thus, $\pi_S(1)$ is the proportion of problems for which method $S$ is the fastest, while $\pi_S(\alpha)$ gives the proportion of problems for which its running time is within a factor $\alpha$ of the best one. The resulting performance profiles are depicted in \cref{fig:quadratics}.

\begin{figure}[h]
    \centering
    \includegraphics[width=\textwidth]{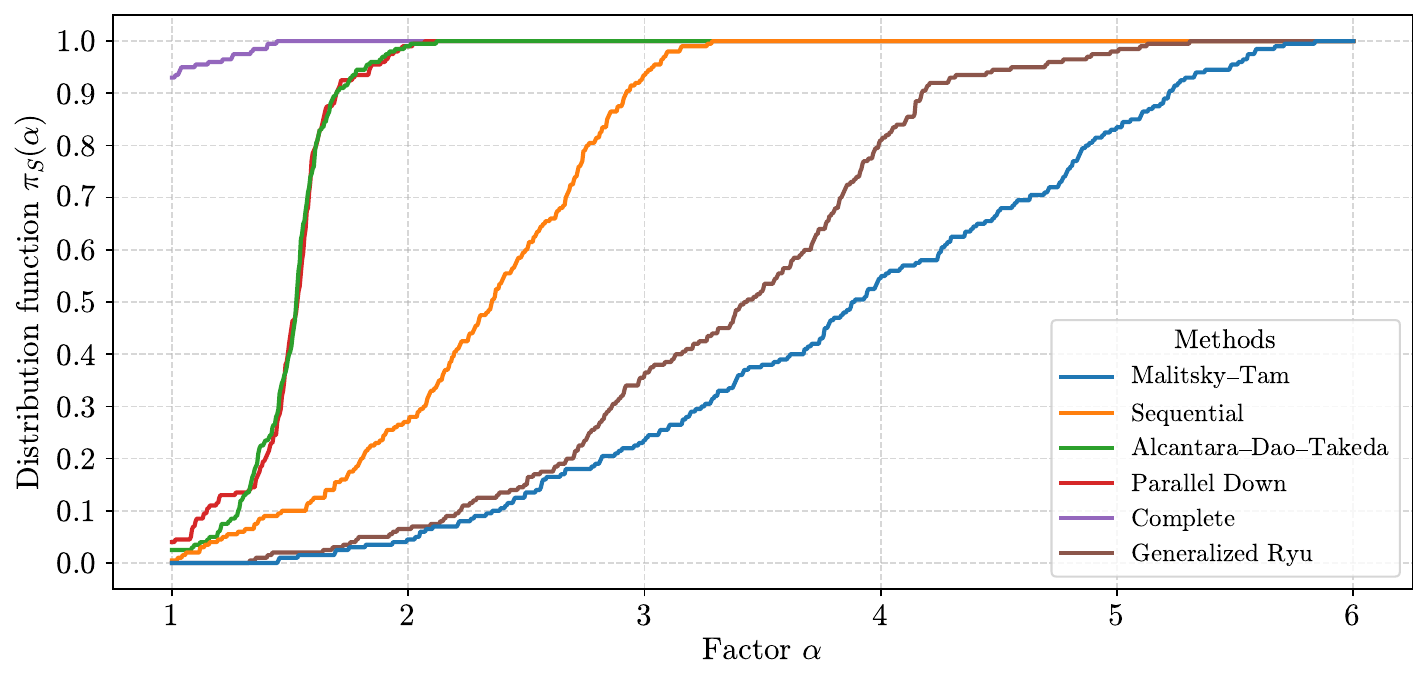}
    \caption{Performance profiles based on CPU time for the quadratic
    problems \cref{experiment1}}
    \label{fig:quadratics}
\end{figure}

We notice the clear superiority of the Complete algorithm, being the fastest method in more than ninety percent of the problems. Next in the ranking are Parallel Down and Alcantara--Dao--Takeda algorithms, whose performance almost coincide, followed by Sequential. This empirically suggests that algorithms with $G=G'$ seem to perform better. Indeed, Generalized Ryu and Malitsky--Tam are the slowest methods, having in common that their graph and subgraph differ.

\section{Conclusion}
\label{sec:conclu}

In this paper, we have introduced a unified family of multiblock ADMM schemes by combining the graph-based splitting framework of \cite{graph-drs} with the complete Moreau identity of \cite{gabayduality}. The resulting methods are determined by a pair of graphs $(G,G')$, whose different topologies induce different coordination patterns among the primal blocks. This framework recovers, as particular instances, several algorithms already proposed in the literature, while also giving rise to new schemes. We have studied in particular the complete-graph configuration, for which we derived an explicit closed-form representation of the resulting multiblock ADMM.

We further showed that the resulting family of algorithms admits an equivalent reformulation in which the governing sequence is eliminated, and the iteration is expressed solely in terms of the primal blocks and a reduced set of dual shadow variables. Besides providing an alternative and more familiar representation of the algorithm, this reformulation clarifies its connection to the classical ADMM, as the Lagrange multiplier appearing in the latter is identified precisely with the single dual shadow variable retained by this primal-dual formulation.

Finally, numerical experiments were conducted to measure and compare how fast the algorithms converge to a solution of the problem. To ensure a fair comparison, we first carried out a grid search to determine the best-performing stepsize and relaxation parameter for each algorithm, so that each method performs optimally. We then based our comparison on performance profiles, which provide clear evidence of the superior performance of the Complete algorithm on the problems considered.

\makeacknowledgements

\printbibliography

\end{document}